\theoremstyle{plain}
\newtheorem{theorem}{Theorem}[section]
\newtheorem{proposition}[theorem]{Proposition}
\newtheorem{lemma}[theorem]{Lemma}
\newtheorem{corollary}[theorem]{Corollary}
\theoremstyle{definition}
\newtheorem{remark}[theorem]{Remark}
\newtheorem{example}[theorem]{Example}
\newtheorem*{conditionA}{Condition~(A)}
\theoremstyle{remark}
\renewenvironment{thebibliography}[1]{%
\begin{oldthebibliography}{#1}%
\setlength{\baselineskip}{.9em}
\linespread{1}
\small
\setlength{\parskip}{0.3ex}%
\setlength{\itemsep}{.5em}%
}%
{%
\end{oldthebibliography}%
}
\newcommand{\eps}{\varepsilon}
\newcommand{\F}{\mathbb{F}}
\newcommand{\G}{\mathbb{G}}
\newcommand{\N}{\mathbb{N}}
\newcommand{\R}{\mathbb{R}}
\renewcommand{\S}{\mathbb{S}}
\newcommand{\cB}{\mathcal{B}}
\newcommand{\cE}{\mathcal{E}}
\newcommand{\cF}{\mathcal{F}}
\newcommand{\cK}{\mathcal{K}}
\newcommand{\cL}{\mathcal{L}}
\newcommand{\cP}{\mathcal{P}}
\newcommand{\fP}{\mathfrak{P}}
\DeclareMathOperator{\proj}{proj}
\DeclareMathOperator{\LSC}{LSC}
\DeclareMathOperator{\USC}{USC}
\DeclareMathOperator{\SC}{SC}
\DeclareMathOperator{\Lip}{Lip}
\newcommand{\as}{\mbox{-a.s.}}
\newcommand{\1}{\mathbf{1}}
\newcommand{\tomega}{\tilde{\omega}}
\newcommand{\tr}{\mbox{tr}}
\numberwithin{equation}{section}
\newcommand{\B}{\mathcal{B}}
\begin{document}

\title{\vspace{0em}
\mbox{Nonlinear L\'evy Processes and their Characteristics}
\date{\today}
\author{
  Ariel Neufeld%
  \thanks{
  Department of Mathematics, ETH Zurich, \texttt{ariel.neufeld@math.ethz.ch}.
  Financial support by Swiss National Science Foundation Grant PDFMP2-137147/1 is gratefully acknowledged.
  }
  \and
  Marcel Nutz%
  \thanks{
  Departments of Statistics and Mathematics, Columbia University, New York, \texttt{mnutz@columbia.edu}. Financial support by NSF Grant DMS-1208985 is gratefully acknowledged.
  }
 }
}
\maketitle \vspace{-1.2em}

\begin{abstract}
  We develop a general construction for nonlinear L\'evy processes with given characteristics. More precisely, given a set $\Theta$ of L\'evy triplets, we construct a sublinear expectation on Skorohod space under which the canonical process has stationary independent increments and a nonlinear generator corresponding to the supremum of all generators of classical L\'evy processes with triplets in $\Theta$. The nonlinear L\'evy process yields a tractable model for Knightian uncertainty about the distribution of jumps for which expectations of Markovian functionals can be calculated by means of a partial integro-differential equation.
\end{abstract}

\vspace{.9em}

{\small
\noindent \emph{Keywords} Nonlinear L\'evy process; Sublinear expectation; Partial integro-differential equation; Semimartingale characteristics; Knightian uncertainty

\noindent \emph{AMS 2010 Subject Classification}
60G51; %
60G44; %
93E20 %
}

\section{Introduction}\label{se:intro}

The main goal of this paper is to construct nonlinear L\'evy processes with prescribed local characteristics. This is achieved by a probabilistic construction involving an optimal control problem on Skorohod space where the controls are laws of semimartingales with suitable characteristics.

Let $X=(X_t)_{t\in\R_+}$ be an $\R^d$-valued process with c\`adl\`ag paths and $X_0=0$, defined on a measurable space $(\Omega,\cF)$ which is equipped with a nonlinear expectation $\cE(\cdot)$. For our purposes, this will be a sublinear operator
\begin{equation}\label{eq:nonlinExpIntro}
  \xi \mapsto \cE(\xi):= \sup_{P\in\fP} E^P[\xi],
\end{equation}
where $\fP$ is a set of probability measures on $(\Omega,\cF)$ and $E^P[\,\cdot\,]$ is the usual expectation, or integral, under the measure $P$. In this setting, if $Y$ and $Z$ are random vectors, $Y$ is said to be independent of $Z$ if
\begin{equation*}
\cE\big(\varphi(Y,Z)\big) = \cE\big(\cE(\varphi(Y,z))|_{z=Z}\big)
\end{equation*}
for all bounded Borel functions $\varphi$, and if $Y$ and $Z$ are of the same dimension, they are said to be identically distributed if
\[
  \cE(\varphi(Y))=\cE(\varphi(Z))
\]
for all bounded Borel functions $\varphi$. We note that both definitions coincide with the classical probabilistic notions if $\fP$ is a singleton.
Following~\cite[Definition~19]{HuPeng.09levy}, the process $X$ is a \emph{nonlinear L\'evy process} under $\cE(\cdot)$ if it has stationary and independent increments; that is, $X_{t}-X_s$ and $X_{t-s}$ are identically distributed for all $0\leq s\leq t$, and $X_{t}-X_s$ is independent of $(X_{s_1},\dots,X_{s_n})$ for all $0\leq s_1 \leq \dots\leq s_n \leq s \leq t$. The particular case of a classical L\'evy process is recovered when $\fP$ is a singleton.

Let $\Theta$ be a set of L\'evy triplets $(b,c,F)$; here $b$ is a vector, $c$ is a symmetric nonnegative matrix, and $F$ is a L\'evy measure. We recall that each L\'evy triplet characterizes the distributional properties and in particular the infinitesimal generator of a classical L\'evy process. More precisely, the associated Kolmogorov equation is
\begin{align*}
  v_t(t,x) - \bigg\{ &b v_x(t,x) + \frac{1}{2} \tr[c v_{xx}(t,x)] \\
  &\!+ \int [v(t,x+z)-v(t,x)-v_x(t,x) h(z)] \, F(dz)\bigg\}=0,
\end{align*}
where, e.g., $h(z)=z\1_{|z|\leq 1}$. Our goal is to construct a nonlinear L\'evy process whose local characteristics are described by the set~$\Theta$, in the sense that the analogue of the Kolmogorov equation will be the fully nonlinear (and somewhat nonstandard) partial integro-differential equation
\begin{align}\label{eq:PIDEintro}
  v_t(t,x) - \!\!\sup_{(b,c,F) \in \Theta}\!\bigg\{ &b v_x(t,x) + \frac{1}{2} \tr[c v_{xx}(t,x)] \\
  &\!+ \int [v(t,x+z)-v(t,x)-v_x(t,x) h(z)] \, F(dz)\bigg\}=0. \nonumber
\end{align}
In fact, our probabilistic construction of the process justifies the name characteristic in a rather direct way.

In our construction, we take $X$ to be the canonical process on Skorohod space and hence $\cE(\cdot)$ is the main object of consideration, or more precisely, the set $\fP$ of probability measures appearing in~\eqref{eq:nonlinExpIntro}. Given an arbitrary set $\Theta$ of L\'evy triplets, we let $\fP=\fP_\Theta$ be the set of all laws of semimartingales whose differential characteristics take values in $\Theta$; that is, their predictable semimartingale characteristics $(B,C,\nu)$ are of the form $(b_t\,dt,c_t\,dt,F_t\,dt)$ and the processes $(b,c,F)$ evolve in $\Theta$. Assuming merely that $\Theta$ is measurable, we then show that $X$ is a nonlinear L\'evy process under $\cE(\cdot)$; this is based on the more general fact that $\cE(\cdot)$ satisfies a certain semigroup property (Theorem~\ref{th:mainLevy}). The proofs require an analysis of semimartingale characteristics which will be useful for other control problems as well. Under the conditions
\begin{equation}\label{eq:condIntro1}
  \sup_{(b,c,F) \in \Theta} \bigg\{\int  |z|\wedge |z|^2 \, F(dz) + |b| + |c| \bigg\}<\infty
\end{equation}
and
\begin{equation}\label{eq:condIntro2}
  \lim_{\eps\to 0} \sup_{(b,c,F) \in \Theta} \int_{|z|\leq \varepsilon} |z|^2 \, F(dz) =0
\end{equation}
on $\Theta$, we show that functionals of the form $v(t,x)=\cE(\psi(x+X_t))$ can be characterized as the unique viscosity solution of the nonlinear Kolmogorov equation~\eqref{eq:PIDEintro} with initial condition $\psi$ (Theorem~\ref{thm:PIDE}).

A special case of a nonlinear L\'evy process with continuous trajectories is called $G$-Brownian motion and due to~\cite{Peng.07, Peng.08}; see also \cite{AvellanedaLevyParas.95,Lyons.95} for the related so-called Uncertain Volatility Model and~\cite{Peng.10} for a monograph providing background related to nonlinear expectations. Nonlinear L\'evy processes were introduced in \cite{HuPeng.09levy}. First, the authors consider a given pair $(X^c,X^d)$ of processes with stationary and independent increments under a given sublinear expectation $\cE(\cdot)$. The continuous process $X^c$ is assumed to satisfy $\cE(|X^c_t|^3)/t\to0$ as $t\to0$ which implies that it is a $G$-Brownian motion, whereas the jump part $X^d$ is assumed to satisfy $\cE(|X^d_t|)\leq Ct$ for some constant $C$.
The sum $X:=X^c+X^d$ is then called a $G$-L\'evy process. It is shown that $G_X[f(\cdot)]:=\lim_{t\to0} \cE(f(X_t))/t$ is well-defined for a suitable class of functions $f$ and has a representation in terms of a set $\Theta$ of L\'evy triplets satisfying
\begin{equation}\label{eq:HuPengCond}
  \sup_{(b,c,F) \in \Theta} \bigg\{\int  |z| \, F(dz) + |b| + |c| \bigg\}<\infty,
\end{equation}
meaning that functions $v(t,x)=\cE(\psi(x+X_t))$ solve the PIDE~\eqref{eq:PIDEintro} with initial condition $\psi$.
We note that~\eqref{eq:HuPengCond} implies both~\eqref{eq:condIntro1} and~\eqref{eq:condIntro2}, and is in fact significantly stronger because it excludes all triplets with infinite variation jumps---the extension of the representation result to such jumps remains an important open problem.
Second, given a set $\Theta$ satisfying~\eqref{eq:HuPengCond}, a corresponding nonlinear L\'evy process $X$ is constructed directly from the PIDE~\eqref{eq:PIDEintro}, in the following sense\footnote{It seems that such a  construction could also be carried out under the weaker conditions~\eqref{eq:condIntro1} and~\eqref{eq:condIntro2}.}. If $X$ is the canonical process, expectations of the form $\cE(\psi(X_t))$ can be defined through the  solution $v$ by simply setting $\cE(\psi(X_t)):=v(t,0)$. More general expectations of the form $\cE(\psi(X_{t_1},\dots, X_{t_n}))$ can be defined similarly by a recursive application of the PIDE. Thus, one can construct $\cE(\xi)$ for all functions $\xi$ in the completion $L^1_G$ of the space of all functions of the form $\psi(X_{t_1},\dots, X_{t_n})$ under the norm $\cE(|\cdot|)$. We remark that this space is significantly smaller than the set of measurable functions, and so it is left open in~\cite{HuPeng.09levy} how to define $\cE(\xi)$ for general random variables $\xi$. See also~\cite{Ren.13} for some properties of~$L^1_G$. A second remark is that while this construction is very direct, it leaves open how to interpret a nonlinear L\'evy process from the point of view of classical probability theory.
Another concept related to nonlinear L\'evy processes is the so-called second order backward stochastic differential equation with jumps as introduced in~\cite{KaziTaniPossamaiZhou.12}, partially extending the formulation of~\cite{SonerTouziZhang.2010bsde} to the case with jumps using an approach along the lines of~\cite{SonerTouziZhang.2010aggreg, SonerTouziZhang.2010dual}. Existence results and the connection to PIDEs have been announced for future work, which is expected to yield another stochastic representation for the PIDE~\eqref{eq:PIDEintro}, under certain conditions.

Summing up, our contribution is twofold. First, we construct nonlinear L\'evy processes for arbitrary (measurable) characteristics $\Theta$, possibly with unbounded diffusion and infinite variation jumps, and the distribution is defined for all measurable functions. Our probabilistic construction allows us to understand the PIDE~\eqref{eq:PIDEintro} as the Hamilton--Jacobi--Bellman equation resulting from the nonstandard control problem
\[
  \sup_{P\in\fP_\Theta} E^P[\,\cdot\,]
\]
over the class of all semimartingales with $\Theta$-valued differential characteristics. This control representation gives a global interpretation to the distribution of a nonlinear L\'evy process as the worst-case expectation over $\fP_\Theta$; in particular, this allows for applications in robust control under model uncertainty (e.g., \cite{NeufeldNutz.12}). Second, under Conditions~\eqref{eq:condIntro1} and~\eqref{eq:condIntro2}, we provide a rigorous link with the PIDE~\eqref{eq:PIDEintro}. On the one hand, this implies that expectations of Markovian functionals can be calculated by means of a differential equation, which is important for applications. On the other hand, it allows us to identify our construction as an extension of~\cite{HuPeng.09levy}. As an example, we introduce a nonlinear version of an $\alpha$-stable L\'evy process which has jumps of infinite variation and thus does not fall within the scope of~\eqref{eq:HuPengCond}.

The remainder of this paper is organized as follows. Section~\ref{se:mainResults} details the setup and contains the main results: the probabilistic construction is summarized in Theorem~\ref{th:mainLevy} and the PIDE characterization in Theorem~\ref{thm:PIDE}. Moreover, we give two examples of nonlinear L\'evy processes. Sections~\ref{se:A2} and~\ref{se:A3} provide an analysis of semimartingale laws and the associated characteristics under conditioning and products which forms the main part of the proof of Theorem~\ref{th:mainLevy} (another ingredient is provided in the companion paper~\cite{NeufeldNutz.13a}). Section~\ref{se:PIDE} concludes with the existence and comparison results for the PIDE~\eqref{eq:PIDEintro}.

\section{Main Results}\label{se:mainResults}

Fix $d\in\N$ and let $\Omega=D_0(\R_+,\R^d)$ be the space of all c\`adl\`ag paths $\omega=(\omega_t)_{t\geq0}$ in $\R^d$ with $\omega_0=0$. We equip $\Omega$ with the Skorohod topology and the corresponding Borel $\sigma$-field $\cF$. Moreover, we denote by $X=(X_t)_{t\geq0}$ the canonical process $X_t(\omega)=\omega_t$ and by $\F=(\cF_t)_{t\geq0}$ the (raw) filtration generated by $X$.

Our starting point is a subset of $\fP(\Omega)$, the Polish space of all probability measures on $\Omega$, determined by the semimartingale characteristics as follows. First, let
\begin{equation}\label{eq:defPsem}
  \fP_{sem}= \{P\in \fP(\Omega)\,|\, \mbox{$X$ is a semimartingale on }(\Omega,\cF,\F,P)\}\subseteq \fP(\Omega)
\end{equation}
be the set of all semimartingale laws. To be specific, let us agree that if $\G$ is a given filtration, a $\G$-adapted process $Y$ with c\`adl\`ag paths will be called a $P$-$\G$-semimartingale if there exist right-continuous, $\G$-adapted processes $M$ and $A$ with $M_0=A_0=0$ such that $M$ is a $P$-$\G$-local martingale, $A$ has paths of (locally) finite variation $P$-a.s., and $Y= Y_0 + M + A$ $P$-a.s. We remark that $X$ is a $P$-semimartingale for $\F$ if and only if it has this property for the right-continuous filtration $\F_+$ or the usual augmentation $\F_+^P$; cf.\ \cite[Proposition~2.2]{NeufeldNutz.13a}. In other words, the precise choice of the filtration in the definition~\eqref{eq:defPsem} is not crucial.

Fix a truncation function $h: \R^d\to \R^d$; that is, a bounded measurable function such that $h(x)=x$ in a neighborhood of the origin, and let $(B^P,C^P,\nu^P)$ be semimartingale characteristics of~$X$ under $P\in \fP_{sem}$ and~$\F$, relative to $h$. To be specific, this means that $(B^P,C^P,\nu^P)$ is a triplet of processes such that $P$-a.s., $B^P$ is the finite variation part in the canonical decomposition of $X -\sum_{0 \leq s \leq \cdot} (\Delta X_s- h(\Delta X_s))$ under $P$, $C^P$ is the quadratic covariation of the continuous local martingale part of~$X$ under $P$, and $\nu^P$ is the $P$-compensator of $\mu^X$, the integer-valued random measure associated with the jumps of $X$. (Again, the precise choice of the filtration does not matter for the present section; see \cite[Proposition~2.2]{NeufeldNutz.13a}.) We shall mainly work with the subset
\begin{equation*}
  \fP^{ac}_{sem}=\big\{P\in \fP_{sem}\,\big|\, \mbox{$(B^P,C^P,\nu^P)\ll dt$, $P$-a.s.}\big\}
\end{equation*}
of semimartingales with absolutely continuous characteristics (with respect to the Lebesgue measure $dt$).
Given $P\in\fP^{ac}_{sem}$, we can consider the associated differential characteristics $(b^P,c^P, F^P)$, defined via $(dB^P,dC^P,d\nu^P)=(b^P dt, c^Pdt, F^Pdt)$. The differential characteristics take values in $\R^d \times \S^d_+\times\cL$, where $\S^d_+$ is the set of symmetric nonnegative definite $d\times d$-matrices and
\begin{equation*}
  \cL= \bigg\{ F \mbox{ measure on } \R^d \, \bigg| \,  \int_{\R^d} |x|^2 \wedge 1 \, F(dx) <\infty \ \mbox{and} \ F(\{0\})=0  \bigg\}
\end{equation*}
is the set of all L\'evy measures, a separable metric space under a suitable version of the weak convergence topology (cf.\ \cite[Section~2]{NeufeldNutz.13a}). Any element $(b,c,F)\in \R^d \times \S^d_+\times\cL$ is called a L\'evy triplet and indeed, there exists a L\'evy process having $(b,c,F)$ as its differential characteristics.

\subsection{Nonlinear L\'evy Processes with given Characteristics}

Let $\emptyset\neq\Theta\subseteq \R^d \times \S^d_+\times\cL$ be any (Borel) measurable subset. Our aim is to construct a nonlinear L\'evy process corresponding to $\Theta$; of course, the case of a classical L\'evy process will correspond to $\Theta$ being a singleton. An important object in our construction is the set of all semimartingale laws whose differential characteristics take values in $\Theta$,
\[%
  \fP_\Theta:=\big\{P\in \fP^{ac}_{sem}\,\big|\, (b^P,c^P,F^P)\in \Theta, \,P\otimes dt\mbox{-a.e.}\big\},
\]%
and a key step will be to show that $\fP_\Theta$ is amenable to dynamic programming, as formalized by Condition~(A) below. To state this condition, we need to introduce some more notation.
Let $\tau$ be a finite $\F$-stopping time. Then
the concatenation of $\omega, \tilde{\omega}\in \Omega$ at $\tau$ is the path
\[
 (\omega\otimes_\tau \tilde{\omega})_u := \omega_u \1_{[0,\tau(\omega))}(u) + \big(\omega_{\tau(\omega)} + \tilde{\omega}_{u-\tau(\omega)}\big) \1_{[\tau(\omega), \infty)}(u),\quad u\geq 0.
\]
For any probability measure $P\in\fP(\Omega)$, there is a regular conditional
probability distribution $\{P^\omega_\tau\}_{\omega\in\Omega}$
given $\cF_\tau$ satisfying
\[%
  P^\omega_\tau\big\{\omega'\in \Omega \,\big|\, \omega' = \omega \mbox{ on } [0,\tau(\omega)]\big\} = 1\quad\mbox{for all}\quad\omega\in\Omega.
\]%
We then define $P^{\tau,\omega}\in \fP(\Omega)$ by
\[
  P^{\tau,\omega}(D):=P^\omega_\tau(\omega\otimes_\tau D),\quad D\in \cF, \quad\mbox{where }\omega\otimes_\tau D:=\{\omega\otimes_\tau \tilde{\omega}\,|\, \tilde{\omega}\in D\}.
\]
Given a function $\xi$ on $\Omega$ and $\omega\in\Omega$,
we also define the function $\xi^{\tau,\omega}$ on $\Omega$ by
\[
  \xi^{\tau,\omega}(\tilde{\omega}) :=\xi(\omega\otimes_\tau \tilde{\omega}),\quad \tilde{\omega}\in\Omega.
\]
If $\xi$ is measurable, we then have $E^{P^{\tau,\omega}}[\xi^{\tau,\omega}]=E^P[\xi|\cF_\tau](\omega)$ for $P$-a.e.\ $\omega\in\Omega$. (The convention $\infty - \infty = -\infty$ is used throughout; for instance, in defining $E^P[\xi|\cF_\tau]:=E^P[\xi^+|\cF_\tau]-E^P[\xi^-|\cF_\tau]$.)
Finally, a subset of a Polish space is called analytic if it is the image of a Borel subset of another Polish space under a Borel-measurable mapping; in particular, any Borel set is analytic.

We can now state the mentioned condition for a given set $\fP \subseteq \fP(\Omega)$.

\begin{conditionA}\label{condA}
  Let $\tau$ be a finite $\F$-stopping time  and let  $P\in \fP$.
  \begin{enumerate}%
    \item[(A1)] The set $\fP \subseteq\, \fP(\Omega)$ is analytic.

    \item[(A2)] We have $P^{\tau,\omega} \in\fP$ for $P$-a.e.\ $\omega\in\Omega$.

    \item[(A3)] If $\kappa: \Omega \to \fP(\Omega)$ is an $\cF_\tau$-measurable kernel and $\kappa(\omega)\in \fP$ for $P$-a.e.\ $\omega\in\Omega$,
    then the measure defined by
    \[%
      \bar{P}(D)=\iint (\1_D)^{\tau,\omega}(\omega') \,\kappa(\omega,d\omega')\,P(d\omega),\quad D\in \cF
    \]%
    is an element of $\fP$.
  \end{enumerate}
\end{conditionA}

Some more notation is needed for the first main result. Given a $\sigma$-field $\mathcal{G}$, the universal completion of $\mathcal{G}$ is the $\sigma$-field $\mathcal{G}^*=\cap_P \mathcal{G}^{(P)}$, where $P$ ranges over all probability measures on $\mathcal{G}$ and $\mathcal{G}^{(P)}$ is the completion of $\mathcal{G}$ under $P$.  Moreover, an $\overline{\R}$-valued function $f$ is called upper semianalytic if $\{f>a\}$ is analytic for each $a\in\R$. Any Borel-measurable function is upper semianalytic and any upper semianalytic function is universally measurable.

\begin{theorem}\label{th:mainLevy}
  Let $\Theta\subseteq \R^d \times \S^d_+\times\cL$ be a measurable set of L\'evy triplets, $\fP_\Theta=\big\{P\in \fP^{ac}_{sem}\,\big|\, (b^P,c^P,F^P)\in \Theta, P\otimes dt\mbox{-a.e.}\big\}$ and consider the associated sublinear expectation $\cE(\cdot)=\sup_{P\in\fP_\Theta} E^P[\,\cdot\,]$ on the Skorohod space~$\Omega$.
  \begin{enumerate}
   \item The set $\fP_\Theta$ satisfies Condition~(A).
   \item Let $\sigma\leq\tau$ be finite $\F$-stopping times and let $\xi:\Omega\to\overline{\R}$ be upper semianalytic. Then the function
   $\omega\mapsto \mathcal{E}_\tau(\xi)(\omega):=\cE(\xi^{\tau,\omega})$
   is $\cF_\tau^*$-measurable and upper semianalytic, and
   \begin{equation}\label{eq:DPP}
     \mathcal{E}_\sigma(\xi)(\omega) = \mathcal{E}_\sigma(\mathcal{E}_\tau(\xi))(\omega)\quad\mbox{for all}\quad \omega\in\Omega.
   \end{equation}
   \item The canonical process $X$ is a nonlinear L\'evy process under $\cE(\cdot)$.
  \end{enumerate}
\end{theorem}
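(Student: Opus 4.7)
The plan is to prove the three parts in order, with essentially all of the real work concentrated in part~(i); once Condition~(A) is established, parts~(ii) and~(iii) fall out from general DPP-style measurable-selection arguments combined with the time-homogeneity of the constraint set $\Theta$. For part~(i), I would verify each of (A1)--(A3) separately. Analyticity (A1) reduces to the Borel-measurable construction of differential characteristics carried out in the companion paper~\cite{NeufeldNutz.13a}: once $\fP^{ac}_{sem}$ is Borel in $\fP(\Omega)$ and $(P,\omega)\mapsto(b^P,c^P,F^P)(\omega)$ is jointly Borel, the condition ``$(b^P,c^P,F^P)\in\Theta$ holds $P\otimes dt$-a.e.'' is Borel in $P$, so $\fP_\Theta$ is even Borel. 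Stability under conditioning (A2) is the subject of Section~\ref{se:A2}: the conditional law $P^{\tau,\omega}$ starts $X$ afresh at the origin, and its differential characteristics at time $u$ coincide, for $P$-a.e.\ $\omega$, with those of $X$ under $P$ at time $\tau(\omega)+u$, so the time-homogeneity of $\Theta$ forces $P^{\tau,\omega}\in\fP_\Theta$. Stability under pasting (A3) is the subject of Section~\ref{se:A3}: one checks that the predictable characteristics of the pasted measure $\bar P$ coincide with those of $P$ on $[0,\tau]$ and, conditionally on $\cF_\tau$, with those of $\kappa(\omega)$ (shifted back through the concatenation) on $(\tau,\infty)$, so that $(b^{\bar P},c^{\bar P},F^{\bar P})\in\Theta$ holds $\bar P\otimes dt$-a.e.

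For part~(ii), the map $(\omega,P)\mapsto E^P[\xi^{\tau,\omega}]$ is upper semianalytic on $\Omega\times\fP(\Omega)$ by the Bertsekas--Shreve integration theorem applied to the upper semianalytic integrand $\xi^{\tau,\omega}$, and taking a supremum over an analytic parameter set preserves upper semianalyticity, hence also $\cF_\tau^*$-measurability of $\mathcal{E}_\tau(\xi)$. The tower identity~\eqref{eq:DPP} splits as usual: ``$\leq$'' is the pointwise bound $E^P[\xi|\cF_\tau](\omega)=E^{P^{\tau,\omega}}[\xi^{\tau,\omega}]\leq \mathcal{E}_\tau(\xi)(\omega)$ which uses (A2), followed by taking the supremum over $P$; ``$\geq$'' uses a Jankov--von Neumann selection of $\varepsilon$-optimal kernels $\kappa(\omega)\in\fP_\Theta$ for $\mathcal{E}_\tau(\xi)(\omega)$ and pastes them onto $P$ via (A3), letting $\varepsilon\to0$. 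Part~(iii) is then immediate: given $0\leq s_1\leq\dots\leq s_n\leq s\leq t$ and a bounded Borel $\varphi$, the concatenation identity applied to $\xi=\varphi(X_t-X_s,X_{s_1},\dots,X_{s_n})$ at $\tau=s$ gives $\xi^{s,\omega}(\tilde\omega)=\varphi(X_{t-s}(\tilde\omega),X_{s_1}(\omega),\dots,X_{s_n}(\omega))$ because the $X_{s_i}$ are $\cF_s$-measurable. Hence $\mathcal{E}_s(\xi)(\omega)=\tilde\psi(X_{s_1}(\omega),\dots,X_{s_n}(\omega))$ with $\tilde\psi(z)=\cE(\varphi(X_{t-s},z))$; the $n=0$ specialization yields stationary increments $\cE(\varphi(X_t-X_s))=\cE(\varphi(X_{t-s}))$, and feeding this back into the general formula via the DPP produces the independence requirement $\cE(\varphi(X_t-X_s,Z))=\cE(\cE(\varphi(X_t-X_s,z))|_{z=Z})$.

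The main obstacle is plainly part~(i), and especially (A3). The intuition that characteristics are a local object which transports under conditioning and pasting is clean, but turning it into a rigorous proof requires a careful simultaneous handling of compensators of the jump measure, continuous martingale parts and finite variation drifts across many measures, together with navigating between the raw filtration $\F$, its right-continuous version $\F_+$, and the $P$-augmentations $\F_+^P$. The pasting step (A3) is subtler than (A2) because the second-stage law $\kappa(\omega)$ depends measurably on $\omega$, so the predictable characteristics of $\bar P$ must be identified only up to $\bar P$-null sets assembled from simultaneous null sets for $P$ and $\kappa(\omega)$, which is exactly the technical analysis that occupies Sections~\ref{se:A2}--\ref{se:A3}.
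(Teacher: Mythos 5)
Your proposal is correct and follows essentially the same route as the paper: (A1) via the Borel-measurable construction of characteristics in the companion paper, (A2) and (A3) via the conditioning and pasting analyses of Sections~\ref{se:A2}--\ref{se:A3}, part~(ii) via the dynamic programming principle of~\cite{NutzVanHandel.12} (whose proof you merely unfold rather than cite), and part~(iii) via the concatenation identity and the tower property exactly as in the paper. No gaps.
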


\pagebreak[4]

Thus, this results yields the existence of nonlinear L\'evy processes with general characteristic $\Theta$ as well as their interpretation in terms of classical stochastic analysis; namely, as a control problem over laws of semimartingales. The semigroup property stated in~\eqref{eq:DPP} will be the starting point for the PIDE result reported below.

\begin{proof}
  (i) The verification of (A1) is somewhat lengthy and carried out in the companion paper~\cite{NeufeldNutz.13a}. There, it is shown that one can construct a version of the semimartingale characteristics which is measurable with respect to the underlying measure $P$, and this fact is used to show that $\fP_\Theta$ is Borel-measurable (and in particular analytic); cf.\ \cite[Corollary~2.7]{NeufeldNutz.13a}. Properties~(A2) and~(A3) will be established in Corollary~\ref{co:A2} and Proposition~\ref{prop:A3}, respectively. They follow from the analysis of semimartingale characteristics under conditioning and products of semimartingale laws that will be carried out in Sections~\ref{se:A2} and~\ref{se:A3}.

  (ii) Once Condition~(A) is established, the validity of~(ii) is a consequence of the dynamic programming principle in the form of~\cite[Theorem~2.3]{NutzVanHandel.12}. (That result is stated for the space of continuous paths, but carries over to Skorohod space with the same proof.)

  (iii) We first show that $X$ has stationary increments. Let $s,t\geq0$ and let $\varphi: \R^d\to\R$ be bounded and Borel. Using the identity
  \[
    X^{t,\omega}_{t+s}-X^{t,\omega}_t = X_s,\quad \omega\in\Omega,
  \]
  the tower property~\eqref{eq:DPP} yields that
  \begin{align*}
    \cE\big(\varphi(X_{t+s}-X_t)\big)
    = \cE\big(\cE_t\big(\varphi(X_{t+s}-X_t)\big)\big)
    =  \cE\big(\cE\big(\varphi(X_s)\big)\big)
    = \cE\big(\varphi(X_s)\big).
  \end{align*}
  Similarly, to see the independence of the increments, let $0\leq t_1 \leq\dots\leq t_n \leq t$ and let $\varphi$ be defined on $\R^{(n+1)d}$ instead of $\R^d$. Then
  \[
    \big(X^{t,\omega}_{t+s}-{X}^{t,\omega}_t,X^{t,\omega}_{t_1},\dots,X^{t,\omega}_{t_n}\big) = \big(X_s,X_{t_1}(\omega),\dots,X_{t_n}(\omega)\big),\quad \omega\in\Omega
  \]
  and~\eqref{eq:DPP} imply that
  \begin{align*}
  & \ \cE\big(\varphi(X_{t+s}-{X}_t,X_{t_1},\dots,X_{t_n})\big) \\
  = & \ \cE\big(\cE_t\big(\varphi(X_{t+s}-X_t,X_{t_1},\dots,X_{t_n})\big)\big)\\
  = & \ \cE\big(\cE\big(\varphi(X_s,x_1,\dots,x_n)\big)|_{x_1=X_{t_1},\dots,
  x_n=X_{t_n}} \big)\\
  = & \ \cE\big(\cE\big(\varphi(X_{t+s}-{X}_t,x_1,\dots,x_n)\big)|_{x_1=X_{t_1},\dots,
  x_n=X_{t_n}} \big),
  \end{align*}
  where the last equality is due to the stationarity of the increments applied to the test function $\varphi(\cdot,x_1,\dots,x_n)$.
\end{proof}

\begin{remark}
  The nonlinear L\'evy property of $X$ corresponds to the fact that the set $\fP=\fP_\Theta$ is independent of $(t,\omega)$. More precisely, recall that~\cite{NutzVanHandel.12} considered more generally a family $\{\fP(t,\omega)\}$ indexed by $t\geq0$ and $\omega\in\Omega$. In this situation, the conditional nonlinear expectation is given by
  \[
    \mathcal{E}_\tau(\xi)(\omega):=\sup_{P\in\fP(\tau(\omega),\omega)} E^P[\xi^{\tau,\omega}],\quad\omega\in\Omega;
  \]
  this coincides with the above definition when $\fP(t,\omega)$ is independent of $(t,\omega)$. As can be seen from the above proof, the temporal and spatial homogeneity of $\fP$ is essentially in one-to-one correspondence with the independence and stationarity of the increments of $X$ under $\cE(\cdot)$.
\end{remark}

In classical stochastic analysis, L\'evy processes can be characterized as semimartingales with constant differential characteristics. The following shows that the nonlinear case allows for a richer structure.

\begin{remark}
  The assertion of Theorem~\ref{th:mainLevy} holds more generally for any set $\fP\subseteq \fP(\Omega)$ satisfying Condition~(A); this is clear from the proof. According to Theorem~\ref{th:charactinv},  Proposition~\ref{prop:stabsem} and~\cite[Theorem~2.5]{NeufeldNutz.13a}, the collection $\fP_{sem}$ of all semimartingale laws (not necessarily with absolutely continuous characteristics) is another example of such a set. In particular, we see that nonlinear L\'evy processes are not constrained to the time scale given by the Lebesgue measure.
  It is well known that classical L\'evy processes have this property and one may say that this is due to the fact that the Lebesgue measure is, up to a normalization, the only homogeneous (shift-invariant) measure on the line. By contrast, there are many sublinear expectations on the line that are homogeneous---for instance, the one determined by the supremum of all measures, which may be seen as the time scale corresponding to $\fP_{sem}$.

  Another property of classical L\'evy processes is that they are necessarily semimartingales. A trivial example satisfying Condition~(A) is the set $\fP=\fP(\Omega)$ of all probability measures on $\Omega$. Thus, we also see that the semimartingale property, considered under a given $P\in\fP$, does not hold automatically.

  One may also note that such (degenerate) examples are far outside the scope of the PIDE-based construction of~\cite{HuPeng.09levy}.
\end{remark}

\begin{remark}
  The present setup could be extended to a case where the set $\Theta$ is replaced by a set-valued process $(t,\omega)\mapsto \Theta(t,\omega)$, in the spirit of the random $G$-expectations~\cite{Nutz.10Gexp}. Of course, this situation is no longer homogeneous and so the resulting process would be a ``nonlinear semimartingale'' rather than a L\'evy process. We shall see in the subsequent sections that the techniques of the present paper still yield the desired dynamic programming properties, exactly as it was done in~\cite{NutzVanHandel.12} for the case of continuous martingales.
\end{remark}

\subsection{Nonlinear L\'evy Processes and PIDE}

For the second main result of this paper, consider a nonempty measurable set $\Theta \subseteq \R^d \times \S_{+}^d \times \mathcal{L}$ satisfying the following two additional assumptions. The first one is
\begin{equation}\label{eq:intcond}
  \sup_{(b,c,F) \in \Theta} \bigg\{\int_{\R^d} |z|\wedge |z|^2 \, F(dz) + |b| + |c| \bigg\}<\infty,
\end{equation}
where $|\cdot|$ is the Euclidean norm; this implies that the control problem defining $\cE(\cdot)$ is non-singular and that the jumps are integrable. Moreover, we require that
\begin{equation}\label{eq:limitcond}
\lim_{\eps\to 0} \sup_{(b,c,F) \in \Theta} \int_{|z|\leq \varepsilon} |z|^2 \, F(dz) =0.
\end{equation}
While this condition does not exclude any particular L\'evy measure, it bounds the contribution of small jumps across $\Theta$. In particular, it prevents $\fP_\Theta$ from containing a sequence of pure-jump processes which converges weakly to, say, a Brownian motion. %
Thus, both conditions are necessary to ensure that the PIDE below is indeed the correct dynamic programming equation for our problem. %

Namely, we fix $\psi \in C_{b,Lip}(\R^d)$, the space of bounded Lipschitz functions on $\R^d$, and consider the fully nonlinear PIDE
\begin{equation}\label{eq:PIDE}
  \begin{cases}
    \partial_t v(t,x)-G\big(D_x v(t,x), D^2_{xx} v(t,x), v(t,x+\cdot)\big)=0 \quad \mbox{on }(0,\infty) \times \R^d, \\
    \quad v(0,\cdot)=\psi(\cdot),
  \end{cases}
\end{equation}
where $G:\R^d \times \S^d \times C_b^{2}(\R^d) \to \R$ is defined by
\begin{multline}\label{eq:PIDE-G}
G\big(p,q,f(\cdot)\big) \\
 := \ \sup_{(b,c,F) \in \Theta}\bigg\{p b + \frac{1}{2} \tr[q c] + \int_{\R^d} \big[f(z)-f(0)- D_x f(0) h(z)\big] F(dz) \bigg\}.
\end{multline}
We remark that this PIDE is nonstandard due to the supremum over a set of L\'evy measures; see also~\cite{HuPeng.09levy}. Specifically, since this set is typically large (nondominated),~\eqref{eq:PIDE-G} does not satisfy a dominated convergence theorem with respect to $f$, which leads to a discontinuous operator $G$.

We write $C_b^{2,3}((0,\infty)\times \R^d)$ for the set of functions on $(0,\infty)\times \R^d$ having bounded continuous derivatives up to the second and third order in $t$ and $x$, respectively. A bounded upper semicontinuous function $u$ on $[0,\infty)\times \R^d$ will be called a viscosity subsolution of~\eqref{eq:PIDE} if $u(0,\cdot)\leq \psi(\cdot)$ and
\begin{equation*}
\partial_t \varphi(t,x)-G\big(D_x \varphi(t,x), D^2_{xx} \varphi(t,x), \varphi(t,x+\cdot)\big)\leq 0
\end{equation*}
whenever $\varphi \in C_b^{2,3}((0,\infty)\times \R^d)$ is such that $\varphi\geq u$ on $(0,\infty)\times \R^d$ and $\varphi(t,x)= u(t,x)$ for some $(t,x) \in (0,\infty)\times \R^d$.
The definition of a viscosity supersolution is obtained by reversing the inequalities and the semicontinuity. Finally, a bounded continuous function is a viscosity solution if it is both sub- and supersolution. We recall that $\cE(\cdot)=\sup_{P\in\fP_\Theta} E^P[\,\cdot\,]$ and $X$ is the canonical process.

\begin{theorem}\label{thm:PIDE}
  Let $\Theta \subseteq \R^d \times \S_{+}^d \times \mathcal{L}$ be a measurable set satisfying~\eqref{eq:intcond} and~\eqref{eq:limitcond} and let $\psi \in C_{b,Lip}(\R^d)$.
  Then
  \begin{equation} \label{eq:val}
    v(t,x):=\cE(\psi(x+X_{t})\big), \quad (t,x) \in [0,\infty)\times \R^d
  \end{equation}
  is the unique viscosity solution of~\eqref{eq:PIDE}.
\end{theorem}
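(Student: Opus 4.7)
The plan is to split the proof into two halves: (a) verifying that $v$ is a viscosity solution of~\eqref{eq:PIDE}, and (b) establishing a comparison principle for bounded viscosity sub- and supersolutions. Both halves hinge on the dynamic programming identity obtained from the semigroup property~\eqref{eq:DPP} of Theorem~\ref{th:mainLevy}(ii) applied to $\xi=\psi(x+X_t)$ with $\sigma=0$ and $\tau=s$: using the stationarity of the increments of $X$ under $\cE$, the identity $X^{s,\omega}_t=X_s(\omega)+X_{t-s}(\tilde\omega)$ at the level of paths gives
\[
 v(t,x) = \sup_{P \in \fP_\Theta} E^P\bigl[v(t-s,\,x+X_s)\bigr],\qquad 0 \le s \le t.
\]
Boundedness of $v$ is inherited from $\psi$; sublinearity of $\cE$ and the Lipschitz property of $\psi$ yield $|v(t,x)-v(t,y)|\le \Lip(\psi)\,|x-y|$; and the standard semimartingale decomposition of $X$ combined with~\eqref{eq:intcond} gives $\cE(|X_{t-s}|)=O((t-s)^{1/2})$, so $v$ is jointly continuous on $[0,\infty)\times\R^d$.

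For the subsolution inequality I would take $\varphi\in C_b^{2,3}$ touching $v$ from above at $(t_0,x_0)\in(0,\infty)\times\R^d$; the DPP together with $v\le\varphi$ gives $\sup_{P\in\fP_\Theta}s^{-1}E^P[\varphi(t_0-s,x_0+X_s)-\varphi(t_0,x_0)]\ge 0$ for $0<s<t_0$. Choosing $P_s\in\fP_\Theta$ that is $o(1)$-optimal in this supremum and rewriting the bracket via It\^o's formula as the $P_s$-expectation of a time-average of
\[
 -\partial_t\varphi + b^{P_s}_u D_x\varphi + \tfrac12 \tr[c^{P_s}_u D^2_{xx}\varphi] + \int\![\varphi(\cdot,\cdot+z)-\varphi-D_x\varphi\cdot h(z)]F^{P_s}_u(dz),
\]
the $\Theta$-valuedness of $(b^{P_s},c^{P_s},F^{P_s})$ combined with a uniform bound on the small-jump Taylor remainder provided by~\eqref{eq:limitcond} delivers $-\partial_t\varphi(t_0,x_0)+G(D_x\varphi,D^2_{xx}\varphi,\varphi(t_0,x_0+\cdot))\ge 0$ in the limit $s\to 0^+$. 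The supersolution inequality is symmetric and actually easier: testing the DPP against the constant-triplet law $P_{(b,c,F)}\in\fP_\Theta$ of a L\'evy process with fixed $(b,c,F)\in\Theta$, It\^o's formula and $\varphi\le v$ give $-\partial_t\varphi(t_0,x_0)+L^{(b,c,F)}\varphi(t_0,x_0)\le 0$ at the touching point, and taking the supremum over $\Theta$ concludes.

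For uniqueness I would prove a comparison principle for~\eqref{eq:PIDE}: any bounded upper-semicontinuous subsolution $u$ and bounded lower-semicontinuous supersolution $w$ with $u(0,\cdot)\le w(0,\cdot)$ satisfy $u\le w$ on $[0,T]\times\R^d$ for every $T>0$. The classical strategy is to reduce to a strict subsolution via $u_\eta:=u-\eta/(T-t)$, double the variables through
\[
 \Phi(t,s,x,y) := u_\eta(t,x)-w(s,y)-\tfrac{1}{2\varepsilon}|x-y|^2-\tfrac{1}{2\varepsilon}|t-s|^2-\alpha(|x|^2+|y|^2),
\]
and evaluate the two viscosity inequalities at a maximizer. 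The nonlocal term must be handled by splitting the integral at a threshold $\delta>0$: on $\{|z|>\delta\}$ the integrand is evaluated pointwise on the test functions and controlled by~\eqref{eq:intcond}; on $\{|z|\le\delta\}$ a second-order Taylor expansion of the test functions leaves a remainder that is $o(1)$ uniformly in $\Theta$ as $\delta\to 0$, precisely by~\eqref{eq:limitcond}. Subtracting the resulting inequalities, using $\sup_\Theta A-\sup_\Theta B\le\sup_\Theta(A-B)$, and sending $\varepsilon,\alpha,\delta\to 0$ in the appropriate order contradicts strictness. The main obstacle is exactly this comparison step: the family $\{F:(b,c,F)\in\Theta\}$ is typically non-dominated, so neither monotone nor dominated convergence is available on the inner integral, and $G$ is discontinuous in its last argument as flagged after~\eqref{eq:PIDE-G}; condition~\eqref{eq:limitcond} is the critical ingredient that bypasses this obstacle by furnishing the requisite $\Theta$-uniform $o(1)$ bound on the small-jump contribution.
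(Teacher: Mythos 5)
Your existence half follows essentially the same route as the paper: the dynamic programming identity from Theorem~\ref{th:mainLevy}(ii), the moment estimate $\cE(\sup_{u\le t}|X_u|)\le C(t+t^{1/2})$ from~\eqref{eq:intcond} giving joint continuity of $v$, and then It\^o's formula with the jumps split at a threshold $\eps$, using~\eqref{eq:limitcond} for a $\Theta$-uniform bound on the small-jump Taylor remainder. (The paper bounds $E^P[\varphi(t-u,x+X_u)-\varphi(t,x)]$ from above uniformly in $P\in\fP_\Theta$ rather than extracting near-optimizers $P_s$, but these are interchangeable. Your shortcut for the supersolution inequality via the constant-triplet L\'evy laws $P_{(b,c,F)}\in\fP_\Theta$ is legitimate and slightly cleaner than the paper's ``proved similarly''.)

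The uniqueness half is where you genuinely diverge, and where the gaps are. The paper does \emph{not} run the doubling-of-variables argument from scratch: it reduces comparison to a known result (essentially Corollary~53 of Hu--Peng, which rests on the Jakobsen--Karlsen maximum principle for semicontinuous functions in the integro-differential setting) by introducing the auxiliary operators $G^\kappa$ of~\eqref{eq:G^k} --- which evaluate the nonlocal term on the semicontinuous solution for $|z|>\kappa$ and on a smooth test function for $|z|\le\kappa$ --- and verifying the structural conditions (C1)--(C9). Your direct sketch skips precisely the two points that this machinery exists to handle. First, the paper's definition of sub/supersolution uses globally touching test functions $\varphi\in C_b^{2,3}$ and evaluates $G$ on $\varphi(t,x+\cdot)$; at a maximizer of your doubled functional $\Phi$ the natural ``test function'' for $u_\eta$ is built from $w$ and the penalization and is not $C^2$, so you must first prove an equivalence with a split (Barles--Imbert type) formulation in which the solution itself enters the nonlocal term away from the origin. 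You cannot simply ``evaluate the integrand pointwise on the test functions'' on $\{|z|>\delta\}$. Second, your localization $\alpha(|x|^2+|y|^2)$ is not integrable against the measures in $\Theta$: \eqref{eq:intcond} only controls $\int |z|\wedge|z|^2\,F(dz)$, so $\int_{|z|>1}|x+z|^2\,F(dz)$ may be infinite and the nonlocal operator applied to the penalized test function is ill-defined; one must use a localization with bounded second derivative and sublinear growth (or a bounded one). Both issues are fixable along standard lines, but as written the comparison step would fail; the paper's verification of (C1)--(C9) for $G$ and $G^\kappa$ (Lemmas~\ref{le:C1}--\ref{le:51}), which is where \eqref{eq:intcond} and \eqref{eq:limitcond} actually enter the uniqueness proof, is the content you would need to supply.
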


The existence part will be proved in Proposition~\ref{prop:PIDEex}, whereas the validity of a comparison principle (and thus the uniqueness) is obtained in Proposition~\ref{pr:comparisonPIDE}. As mentioned in the Introduction, this result allows us to rigorously identify our construction as an extension of~\cite{HuPeng.09levy}. A quite different application is given in Example~\ref{ex:stable} below.

\subsection{Examples}\label{se:examples}

We conclude this section with two examples of nonlinear L\'evy processes in dimension $d=1$. The first one, called Poisson process with uncertain intensity, is the simplest example of interest and was already introduced in~\cite{HuPeng.09levy} under slightly more restrictive assumptions.

\begin{example}
  Fix a measurable set $\Lambda\subseteq \R_+$ and consider
  \[
    \Theta:=\big\{(0,0,\lambda \delta_{1}(dx)) \, \big|  \, \lambda \in \Lambda \big\}.
  \]
  Each triplet in $\Theta$ corresponds to a Poisson process with some intensity $\lambda\in\Lambda$, so that $\Lambda$ can be called the set of possible intensities.
  To see that $\Theta$ is measurable, note that $\Theta$ is the image of $\Lambda$ under $\lambda\mapsto \lambda \, \delta_{1}(dx)$. This is a measurable one-to-one mapping from $\R_+$ into $\cL$, and as $\cL$ is a separable metric space according to \cite[Lemma~2.3]{NeufeldNutz.13a}, it follows by Kuratowski's theorem \cite[Proposition~7.15, p.\,121]{BertsekasShreve.78} that $\Theta$ is indeed measurable.

  As a result, Theorem~\ref{th:mainLevy} shows that the canonical process $X$ is a nonlinear L\'evy process with respect to $\cE(\cdot)=\sup_{P \in \fP_\Theta}E^P[\,\cdot\,]$. Moreover, if $\Lambda$ is bounded, Conditions~\eqref{eq:intcond} and~\eqref{eq:limitcond} hold and Theorem~\ref{thm:PIDE} yields that $v(t,x):= \cE(\psi(x+ X_t))$ is the unique viscosity solution of the PIDE~\eqref{eq:PIDE} with nonlinearity
  \[
    G\big(p,q,f(\cdot)\big)
   = \sup_{\lambda \in \Lambda} \lambda \,\big(f(1)-f(0)- D_x f(0)h(1)\big),
  \]
  for all $\psi \in C_{b,Lip}(\R)$.
\end{example}

The second example represents uncertainty over a family of stable triplets, which does not fall within the framework of~\cite{HuPeng.09levy} because of the infinite variation jumps.  We shall exploit the PIDE result to infer a nontrivial distributional property. In view of the central limit theorem of~\cite{Peng.10CLT} for the nonlinear Gaussian distribution and classical results for $\alpha$-stable distributions, one may suspect that this example also yields the limiting distribution in a nonstandard limit theorem\footnote{Such a result was indeed obtained in follow-up work~\cite{BayraktarMunk.14}.}.

\begin{example}\label{ex:stable}
   Let $\alpha\in(0,2)$, fix  measurable sets $B\subseteq\R$ and $K_\pm\subseteq\R_+$, and consider
   \[
     \Theta :=\big\{(b,0,F_{k_\pm}) \, \big|  \, b \in B,\,k_{\pm}\in K_{\pm}\big\},
   \]
   where $F_{k_\pm}$ denotes the $\alpha$-stable L\'evy measure
   \[
     F_{k_\pm}(dx) = \big(k_{-} \, \1_{(-\infty,0)} + k_{+} \, \1_{(0,\infty)}\big)(x) \, |x|^{-\alpha-1} \, dx.
   \]
   If $f$ is a bounded measurable function on $\R$, then $(k_+,k_-)\mapsto \int f(x)\,F_{k_\pm}(dx)$ is measurable by Fubini's theorem. In view of \cite[Lemma~2.4]{NeufeldNutz.13a}, this means that
   $(k_+,k_-)\mapsto F_{k_\pm}$ is a measurable one-to-one mapping into $\cL$, and thus Kuratowski's theorem again yields that $\Theta$ is measurable.

   As a result, Theorem~\ref{th:mainLevy} once more shows that the canonical process $X$ is a nonlinear L\'evy process with respect to $\cE(\cdot)=\sup_{P \in \fP_\Theta}E^P[\,\cdot\,]$. If $B,K_\pm$ are bounded and $\alpha\in(1,2)$, Conditions~\eqref{eq:intcond} and~\eqref{eq:limitcond} hold and Theorem~\ref{thm:PIDE} yields that $v(t,x):= \cE(\psi(x+ X_t))$ is the unique viscosity solution of the PIDE~\eqref{eq:PIDE} with
   \[
    G\big(p,q,f(\cdot)\big)
     = \sup_{b\in B,\,k_{\pm}\in K_{\pm}} \bigg\{
     p b + \int_{\R} \big(f(z)-f(0)- D_x f(0) h(z)\big) \, F_{k_\pm}(dz)
     \bigg\},
    \]
  for all $\psi \in C_{b,Lip}(\R)$.

  With these conditions still in force, we now use the PIDE to see that $X$ indeed satisfies a scaling property like the classical stable processes; namely, that $X_{\lambda t}$ and $\lambda^{1/\alpha}X_t$ have the same distribution in the sense that
  \[
    \cE(\psi(X_{\lambda t}))=\cE(\psi(\lambda^{1/\alpha}X_t)),\quad \psi \in C_{b,Lip}(\R)
  \]
  for all $\lambda>0$ and $t\geq0$, provided that $X_t$ is centered. More precisely, as $\alpha\in(1,2)$, we may state the characteristics with respect to $h(x)=x$. In this parametrization, we suppose that $B=\{0\}$, since clearly no scaling property can exist in the situation with drift uncertainty. Given $\psi \in C_{b,Lip}(\R)$, Theorem~\ref{thm:PIDE} yields that $\cE(\psi(X_{\lambda t}))=v(\lambda t,0)$, where $v$ is the unique solution of the PIDE with initial condition $\psi$. If we define $\tilde{v}(t,x):=v(\lambda t,\lambda^{1/\alpha}x)$, it follows from
  \[
    G\big(p,q,f(\lambda^{1/\alpha}\cdot)\big)=\lambda G\big(p,q,f(\cdot)\big),\quad f\in C_b^{2}(\R)
  \]
  that $\tilde{v}$ is the (unique) viscosity solution to the same PIDE with initial condition $\tilde{\psi}(x):=\psi(\lambda^{1/\alpha}x)$. In particular, $\tilde{v}(t,0)=\cE(\tilde{\psi}(X_t))$ by Theorem~\ref{thm:PIDE}. As a result, we have
  \[
    \cE(\psi(X_{\lambda t})) = v(\lambda t,0)=\tilde{v}(t,0)=\cE(\tilde{\psi}(X_t))=\cE(\psi(\lambda^{1/\alpha}X_t))
  \]
  as claimed. Note that  $\fP_\Theta$ contains many semimartingale laws  which do not satisfy the scaling property, so that this identity is indeed not trivial.
\end{example}

\section{Conditioned Semimartingale Laws and~(A2)}\label{se:A2}

In this section, we show that given $P\in\fP_{sem}$, the measures of the form $P^{\tau,\omega}$ are again semimartingale laws, and we establish the corresponding transformation of the semimartingale characteristics.  In particular, this will yield the property~(A2) for the set $\fP_\Theta$ as required by the main results.

We remark that the use of the raw filtration $\F$ has some importance in this section; for instance, we shall frequently apply Galmarino's test and related properties.
The following notation will be used. Let $P\in\fP_{sem}$ and let $\nu(\cdot,dt,dz)$ be the $P$-$\F$-compensator of $\mu^X$; that is, the third characteristic under $P$. Then there exists a decomposition
\begin{equation}\label{eq:nuDecomp}
  \nu(\cdot,dt,dx)=F_{\cdot,t}(dz)\,dA_t(\cdot)\quad P\as,
\end{equation}
where $F_{\cdot,t}(dz)$ is a kernel from $(\Omega\times[0,\infty),\mathcal{P})$ into $(\R^d,\mathcal{B}(\R^d))$ and $A$ is an $\F$-predictable process with $A_0=0$ and $P$-a.s.\ non-decreasing, $P$-a.s.\ right-continuous paths; cf.\ \cite[Theorem~II.1.8, p.\,66]{JacodShiryaev.03} and \cite[Lemma~7, p.\,399]{DellacherieMeyer.82}.
We often write $F_{t}(dz)$ instead of $F_{\cdot,t}(dz)$. Moreover, if $Y$ is a stochastic process and $\sigma,\tau$ are finite stopping times, we simply write $Y^{\tau,\omega}_{\sigma+\cdot}$ for $(Y_{\sigma+\cdot})^{\tau,\omega}$; that is, the process $(\tilde\omega,t)\mapsto Y_{\sigma(\omega\otimes_\tau \tilde\omega) + t} (\omega\otimes_\tau \tilde\omega)$.

\begin{theorem}\label{th:charactinv}
  Let $P \in \fP_{sem}$, let $\tau$ be a finite $\F$-stopping time, and let $(B,C,F(dz)\,dA)$ be $P$-$\F$-characteristics of $X$. For $P$-a.e.\ $\omega\in\Omega$, we have $P^{\tau,\omega}\in \fP_{sem}$ and the processes
  \[%
    B^{\tau,\omega}_{\tau+ \cdot}-B_{\tau(\omega)}(\omega),\quad C^{\tau,\omega}_{\tau+ \cdot}-C_{\tau(\omega)}(\omega),\quad F^{\tau,\omega}_{\tau+\cdot}(dz)\,d(A_{\tau+\cdot}^{\tau,\omega}-A_{\tau(\omega)}(\omega))
  \]%
  define a triplet of $P^{\tau,\omega}$-$\F$-characteristics of $X$. Moreover, if $P\in\fP^{ac}_{sem}$ and $(b,c,F)$ are differential $P$-$\F$-characteristics, then for $P$-a.e.\ $\omega\in\Omega$, we have $P^{\tau,\omega}\in \fP^{ac}_{sem}$ and
  \[%
    b^{\tau,\omega}_{\tau+\cdot}, \quad c^{\tau,\omega}_{\tau+\cdot}, \quad F^{\tau,\omega}_{\tau+\cdot}(dz)
  \]%
  define differential $P^{\tau,\omega}$-$\F$-characteristics of $X$.
\end{theorem}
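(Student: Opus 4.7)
The plan is to reduce to the martingale-problem formulation of semimartingale characteristics and then transport the local-martingale property through regular conditional distributions. By, e.g., Jacod--Shiryaev~II.2.42, the triplet $(B, C, F(dz)\,dA)$ is a version of the $P$-$\F$-characteristics of $X$ if and only if the following are local $P$-$\F$-martingales: $\check X - X_0 - B$, where $\check X := X - (x-h(x))*\mu^X$; the processes $\check X^{i}\check X^{j} - (h^{i}h^{j})*\nu - C^{ij}$ for $1 \le i,j \le d$; and $g*\mu^X - g*\nu$ for each $g$ in a fixed countable family $\mathcal{G}$ of bounded continuous functions vanishing on a neighborhood of the origin which is rich enough to determine $\nu$. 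Thus it suffices to show that each member of this countable family, suitably shifted by $\tau$, remains a local martingale under $P^{\tau,\omega}$ for $P$-a.e.\ $\omega$, and that the shifted triplet displayed in the statement reproduces exactly these shifted processes.

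The key transfer lemma (Step~2) is the standard fact that if $M$ is a c\`adl\`ag $P$-$\F$-local martingale, then for $P$-a.e.\ $\omega$ the process $\tilde\omega \mapsto M_{\tau(\omega)+\cdot}(\omega \otimes_\tau \tilde\omega) - M_{\tau(\omega)}(\omega)$ is a $P^{\tau,\omega}$-$\F$-local martingale. I would prove this by: (i) invoking Galmarino's test so that any $\F$-stopping time $\sigma$ lifts to an $\F$-stopping time $\sigma^{\tau,\omega}$ on the shifted space through $\sigma^{\tau,\omega}(\tilde\omega):=(\sigma(\omega \otimes_\tau \tilde\omega) - \tau(\omega))^{+}$, so that a localizing sequence for $M$ transfers to one for the shifted process; and (ii) reducing the uniformly integrable martingale property to the countably many identities $E^{P}[M_{t_k}\,|\,\cF_{s_k}](\omega)=M_{s_k}(\omega)$ with $s_k\le t_k$ rational, each of which transfers through regular conditional probability outside a $P$-nullset by the very definition of $P^{\omega}_\tau$ and $P^{\tau,\omega}$. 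Taking the union of the countably many nullsets arising from all members of the family in Step~1 and their localizing sequences yields a single $P$-nullset outside which all the desired local-martingale properties hold simultaneously.

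Step~3 identifies the shifted triplet. Predictability of $B^{\tau,\omega}_{\tau+\cdot}$, $C^{\tau,\omega}_{\tau+\cdot}$, of the shifted increasing process $A^{\tau,\omega}_{\tau+\cdot}$ and of the shifted kernel $F^{\tau,\omega}_{\tau+\cdot}$ follows because the $\F$-predictable $\sigma$-field is generated by processes of the form $\1_{(\sigma_1,\sigma_2]}$ with $\F$-stopping times, and Galmarino's test shows this class is stable under $Y \mapsto Y^{\tau,\omega}_{\tau+\cdot}$; the general case follows by a monotone class argument. On the full $P^\omega_\tau$-measure set $\{\omega' : \omega' = \omega \text{ on } [0,\tau(\omega)]\}$, the random measure $\mu^X$ and the continuous martingale part of $X$ split additively at time $\tau(\omega)$, so plugging $\tilde B := B^{\tau,\omega}_{\tau+\cdot} - B_{\tau(\omega)}(\omega)$, $\tilde C := C^{\tau,\omega}_{\tau+\cdot} - C_{\tau(\omega)}(\omega)$, and $\tilde \nu := F^{\tau,\omega}_{\tau+\cdot}\,d(A^{\tau,\omega}_{\tau+\cdot} - A_{\tau(\omega)}(\omega))$ into the characterizing local martingales of Step~1 produces precisely the shifts of the originals (the constants $B_{\tau(\omega)}(\omega)$ and $C_{\tau(\omega)}(\omega)$ are absorbed at $t=0$ and do not alter the local-martingale property). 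Combined with Step~2, this shows $(\tilde B, \tilde C, \tilde\nu)$ are $P^{\tau,\omega}$-$\F$-characteristics of $X$. The absolute-continuity conclusion is then pathwise: if $B_\cdot(\omega') = \int_0^\cdot b_s(\omega')\,ds$, then $B^{\tau,\omega}_{\tau+t}(\tilde\omega) - B_{\tau(\omega)}(\omega) = \int_0^t b^{\tau,\omega}_{\tau(\omega)+s}(\tilde\omega)\,ds$ $P^{\tau,\omega}$-a.s., using $\tau(\omega \otimes_\tau \tilde\omega) = \tau(\omega)$ by Galmarino, and likewise for $c$ and $F$.

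The main obstacle is the interplay in Step~2 between the local (rather than true) martingale property and the regular conditional expectation: one needs the characterization via a countable, separating family in Step~1 so that all relevant exceptional nullsets can be unioned into a single $P$-nullset, and one needs Galmarino's test to lift localizing stopping times to the shifted path space. The predictability bookkeeping in Step~3 also depends crucially on working with the raw filtration $\F$ and on the measurable version of the semimartingale characteristics constructed in the companion paper~\cite{NeufeldNutz.13a}, which in particular allows viewing $(b,c,F)$ as Borel-measurable functions of $(\omega, t)$.
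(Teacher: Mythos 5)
Your overall strategy (reduce to the martingale-problem characterization of the characteristics and transfer the local-martingale property through $P^{\tau,\omega}$) is a legitimate alternative to the paper's route, but your key transfer lemma in Step~2 has a genuine gap, and it is precisely the point where the paper works hardest. To show that $M^{\tau,\omega}_{\tau+\cdot}$ is a $P^{\tau,\omega}$-martingale on a stopped interval, the identities you must transfer are of the form $E^{P}\big[(M_{\tau+v}-M_{\tau+u})\hat f\,\big|\,\cF_\tau\big]=0$, i.e.\ optional sampling at the stopping times $\tau+u\le\tau+v$, which are finite but \emph{unbounded}. Your reduction to ``countably many identities $E^P[M_{t_k}|\cF_{s_k}]=M_{s_k}$ with $s_k\le t_k$ rational'' does not capture this: those identities condition at deterministic times, whereas the regular conditional distribution defining $P^{\tau,\omega}$ conditions at $\tau$, and optional sampling at unbounded stopping times fails for martingales that are not uniformly integrable. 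This is why the paper's Lemma~\ref{le:marttransinv} is stated only for UI martingales, and why its Lemma~\ref{le:localmarttransinv} is restricted to local martingales with \emph{uniformly bounded jumps}: there the authors localize with the hitting times $\rho_n$ of level $n$, use the bounded-jump hypothesis to dominate $\sup_t|M^{\tau,\omega}_{\tau+(\rho_n\wedge t)}|$ by an integrable variable, and then descend from the augmented filtration back to the raw filtration $\F$ via Dellacherie's results. Your characterizing family contains $\check X^i\check X^j-(h^ih^j)*\nu-C^{ij}$, whose jumps involve $\check X_-$ and are \emph{not} uniformly bounded (nor integrable in general), so you need a strictly stronger transfer lemma than the one the paper proves, and the sketch you give does not establish it.

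The paper sidesteps this difficulty entirely, and it is worth seeing how. For the first characteristic it transfers only the local martingale part $M$ of the special semimartingale $\widetilde X=X-\sum(\Delta X_s-h(\Delta X_s))$; since $\widetilde X$ has bounded jumps and $B$ is predictable, $M$ has uniformly bounded jumps and Lemma~\ref{le:localmarttransinv} applies. The third characteristic is handled without any local martingales, directly through the defining duality $E^P[W*\mu^X]=E^P[W*\nu]$ for nonnegative predictable $W$ (Lemmas~\ref{le:predcandinv}, \ref{le:condexpinv} and Proposition~\ref{prop:compjumpinv}), which transfers through $\cF_\tau$ with no integrability issues because everything is nonnegative. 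The second characteristic is obtained from the measure-independent pathwise version $\hat C$ of the companion paper, whose increments depend only on the increments of $X$; note that your phrase ``the continuous martingale part of $X$ splits additively at $\tau$'' is circular as written, since the continuous martingale part is itself measure-dependent and its identification under $P^{\tau,\omega}$ is part of what must be proved. Your predictability bookkeeping in Step~3 and the pathwise absolute-continuity argument at the end are fine. If you want to salvage your route, either prove the transfer lemma for general local martingales (nontrivial), or replace the quadratic characterizing processes by the paper's combination of the bounded-jump canonical decomposition, the compensator duality, and $\hat C$.
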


The proof will be given in the course of this section. Before that, let us state a consequence which forms part of Theorem~\ref{th:mainLevy}.

\begin{corollary}\label{co:A2}
  Let $\Theta \subseteq \R^d \times \S^d_{+}  \times \mathcal{L}$ be measurable, let $P \in \fP_{\Theta}$ and let $\tau$ be a finite $\F$-stopping time. Then $P^{\tau,\omega} \in \fP_{\Theta}$ for $P$-a.e.\ $\omega \in \Omega$; that is, $\fP_{\Theta}$ satisfies~(A2).
\end{corollary}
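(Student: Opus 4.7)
The plan is to deduce the corollary directly from Theorem~\ref{th:charactinv}, which already supplies everything except the $\Theta$-valued property. Given $P \in \fP_\Theta$, Theorem~\ref{th:charactinv} yields a $P$-null set $N_0 \subseteq \Omega$ off which $P^{\tau,\omega} \in \fP^{ac}_{sem}$ and the differential $P^{\tau,\omega}$-$\F$-characteristics of $X$ are $(b^{\tau,\omega}_{\tau+\cdot}, c^{\tau,\omega}_{\tau+\cdot}, F^{\tau,\omega}_{\tau+\cdot}(dz))$, where $(b,c,F)$ are the differential characteristics of $X$ under $P$. Thus only the condition $(b^{P^{\tau,\omega}}, c^{P^{\tau,\omega}}, F^{P^{\tau,\omega}})(\tilde\omega, t) \in \Theta$ for $P^{\tau,\omega} \otimes dt$-a.e.\ $(\tilde\omega, t)$ remains to be checked.

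Define the exceptional set
\[
  N := \{(\omega', s) \in \Omega \times \R_+ \,|\, (b_s(\omega'), c_s(\omega'), F_s(\omega', dz)) \notin \Theta\},
\]
which is $(P \otimes dt)$-null by hypothesis and measurable since $\Theta$ is measurable and $(b,c,F)$ are predictable. Because $\tau$ is an $\F$-stopping time with respect to the raw filtration, Galmarino's test gives $\tau(\omega \otimes_\tau \tilde\omega) = \tau(\omega)$, so the notation unfolds to $b^{\tau,\omega}_{\tau+t}(\tilde\omega) = b_{\tau(\omega)+t}(\omega\otimes_\tau \tilde\omega)$ (and similarly for $c$ and $F$). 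Hence the set on which the shifted differential characteristics leave $\Theta$ is
\[
  M_\omega := \{(\tilde\omega, t) \,|\, (\omega \otimes_\tau \tilde\omega, \tau(\omega) + t) \in N\},
\]
and the goal reduces to showing $(P^{\tau,\omega} \otimes dt)(M_\omega) = 0$ for $P$-a.e.\ $\omega$.

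For this I would integrate against $P(d\omega)$ and change variables $s = \tau(\omega) + t$:
\[
  \int_\Omega (P^{\tau,\omega} \otimes dt)(M_\omega) \, P(d\omega)
  = \int_\Omega \int_{\tau(\omega)}^\infty \!\!\int_\Omega \1_N(\omega \otimes_\tau \tilde\omega, s) \, P^{\tau,\omega}(d\tilde\omega) \, ds \, P(d\omega).
\]
Unwinding the definition $P^{\tau,\omega}(D) = P^\omega_\tau(\omega \otimes_\tau D)$ and using that $P^\omega_\tau$ is concentrated on paths equal to $\omega$ on $[0,\tau(\omega)]$, the inner integral over $\tilde\omega$ becomes $\int \1_N(\omega', s) \, P^\omega_\tau(d\omega')$. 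Assembling by the regular conditional probability property and bounding by the full time integral gives
\[
  \int_\Omega \int_{\tau(\omega')}^\infty \1_N(\omega', s) \, ds \, P(d\omega') \leq (P \otimes dt)(N) = 0,
\]
so $(P^{\tau,\omega} \otimes dt)(M_\omega) = 0$ for $P$-a.e.\ $\omega$, completing the proof.

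There is essentially no hard step here: the content is already in Theorem~\ref{th:charactinv}, and the only genuine care needed is the bookkeeping with Galmarino's test to ensure $\tau(\omega \otimes_\tau \tilde\omega) = \tau(\omega)$ and the Fubini/regular-conditioning manipulation above. Measurability of $N$ and of the map $(\omega, \tilde\omega, s) \mapsto \1_N(\omega \otimes_\tau \tilde\omega, s)$, which justifies applying Fubini, is routine given the measurable structure of $\Theta$ and the predictable characteristics.
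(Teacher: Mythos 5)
Your proposal is correct and follows the same route as the paper: the paper's proof is literally a one-line appeal to the characteristic formula of Theorem~\ref{th:charactinv} plus the definition of $\fP_\Theta$, and your Fubini/Galmarino bookkeeping is exactly the detail being suppressed there (the same manipulation is spelled out by the authors for the analogous claim in the proof of Proposition~\ref{prop:A3}).
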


\begin{proof}
  This is a direct consequence of the formula for the differential characteristics under $P^{\tau,\omega}$ from Theorem~\ref{th:charactinv}  and the definition of $\fP_\Theta$.
\end{proof}

As a first step towards the proof of Theorem~\ref{th:charactinv}, we establish two facts about the conditioning of (local) martingales.

\begin{lemma}\label{le:marttransinv}
  Let $P \in \fP(\Omega)$, let $M$ be a $P$-$\F$-uniformly integrable martingale with right-continuous paths and let $\tau$ be a finite $\F$-stopping time. Then $M^{\tau,\omega}_{\tau+\cdot}$ is a $P^{\tau,\omega}$-$\F$-martingale for $P$-a.e.\ $\omega \in \Omega$.
\end{lemma}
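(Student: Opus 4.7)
The plan is to reduce the desired $P^{\tau,\omega}$-martingale property to the $P$-martingale property of $M$ using two ingredients: the optional sampling theorem applied to the stopping times $\tau+s\le\tau+t$ (valid since $M$ is $P$-uniformly integrable, so that $M_{\tau+t}$ is $P$-integrable for every finite stopping time), together with the disintegration formula $E^P[\xi\mid \cF_\tau](\omega) = E^{P^{\tau,\omega}}[\xi^{\tau,\omega}]$ already recorded in the excerpt.

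First I would dispose of the structural properties. Galmarino's test, available because $\F$ is the raw filtration, gives $\tau(\omega\otimes_\tau \tilde\omega)=\tau(\omega)$, and hence $M^{\tau,\omega}_{\tau+t}(\tilde\omega) = M_{\tau(\omega)+t}(\omega\otimes_\tau\tilde\omega)$. Since $M_{\tau(\omega)+t}$ is $\cF_{\tau(\omega)+t}$-measurable and the restriction of $\omega\otimes_\tau\tilde\omega$ to $[0,\tau(\omega)+t]$ depends only on the fixed $\omega$ and on $\tilde\omega|_{[0,t]}$, the map $\tilde\omega\mapsto M^{\tau,\omega}_{\tau+t}(\tilde\omega)$ is $\cF_t$-measurable; right-continuity in $t$ is inherited from $M$, and integrability holds because $E^{P^{\tau,\omega}}[|M^{\tau,\omega}_{\tau+t}|] = E^P[|M_{\tau+t}|\mid \cF_\tau](\omega)$ is finite for $P$-a.e.\ $\omega$.

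For the martingale property, fix $0\le s<t$ and $A\in\cF_s$ and introduce the pulled-back set $\widetilde{A} := \{\omega'\in\Omega : (X_{\tau+u}(\omega')-X_{\tau}(\omega'))_{u\ge 0} \in A\}\in\cF_{\tau+s}$. Optional sampling of $M$ at $\tau+s\le\tau+t$ then yields $E^P[M_{\tau+t}\mathbf{1}_{\widetilde{A}}\mid\cF_\tau] = E^P[M_{\tau+s}\mathbf{1}_{\widetilde{A}}\mid\cF_\tau]$ $P$-a.s. Disintegrating both sides through $P^{\tau,\omega}$ and using the identity $(\omega\otimes_\tau\tilde\omega)_{\tau(\omega)+u} - (\omega\otimes_\tau\tilde\omega)_{\tau(\omega)} = \tilde\omega_u$ (which gives $\mathbf{1}_{\widetilde{A}}^{\tau,\omega}=\mathbf{1}_A$), the two sides become $E^{P^{\tau,\omega}}[\mathbf{1}_A M^{\tau,\omega}_{\tau+t}]$ and $E^{P^{\tau,\omega}}[\mathbf{1}_A M^{\tau,\omega}_{\tau+s}]$. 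Restricting to rational $s<t$ and to $A$ in a countable $\pi$-system generating $\cF_s$ (which exists because $\Omega$ is Polish and the canonical process is c\`adl\`ag), a countable union of $P$-null sets leaves one exceptional set off of which the identity holds simultaneously for all these parameters; a monotone class argument in $A$ together with the right-continuity of $M^{\tau,\omega}_{\tau+\cdot}$ then extends the conclusion to arbitrary $A\in\cF_s$ and arbitrary $0\le s\le t$.

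The main obstacle I anticipate is the null-set bookkeeping: one must ensure that the exceptional $P$-null set is independent of $(A,s,t)$, so that the martingale identity holds simultaneously for all parameters for the same $P$-a.e.\ $\omega$. This is resolved precisely by isolating a countable indexing (rational times, countable generator of $\cF_s$) before passing to the general case via a monotone class argument and the right-continuity of paths.
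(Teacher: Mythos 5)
Your proposal is correct and follows essentially the same route as the paper: Galmarino's test for adaptedness, the conditioning identity for integrability, and optional sampling of $M$ at $\tau+s\le\tau+t$ against a pulled-back test function (the paper uses a general bounded $\cF_u$-measurable $f$ with $\hat f(\omega)=f(\omega_{\tau(\omega)+\cdot}-\omega_{\tau(\omega)})$ where you use $\mathbf{1}_{\widetilde A}$), followed by disintegration through $P^{\tau,\omega}$. Your explicit handling of the null-set uniformity via countable times, a countable generating $\pi$-system, and right-continuity is a welcome elaboration of a step the paper leaves implicit, not a deviation from its argument.
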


\begin{proof}
  By Galmarino's test, $M^{\tau,\omega}_{\tau+\cdot}$ is $\F$-adapted. Moreover,
  \begin{equation*}
    E^{P^{\tau,\omega}}\big[|M^{\tau,\omega}_{\tau+t}|\big]= E^{P}\big[|M_{\tau+t}|\,\big| \, \cF_\tau \big](\omega)<\infty  \ \ \mbox{for} \quad P \mbox{-a.e.} \ \omega \in \Omega
  \end{equation*}
  and all $t\geq 0$. Let $0\leq u\leq v<\infty$ and let $f$ be a bounded $\cF_u$-measurable function. Define the function $\hat{f}$ by
  \[
    \hat{f}(\omega):= f(\omega_{\tau(\omega)+\cdot}-\omega_\tau(\omega)),\quad \omega\in\Omega;
  \]
  then $\hat{f}$ is $\cF_{\tau+u}$-measurable and ${\hat{f}\,}^{\tau,\omega}=f$. Applying the optional sampling theorem to the right-continuous, $P$-$\F$-uniformly integrable martingale $M$, we obtain that
  \begin{equation*}
    E^{P^{\tau,\omega}}\big[\big(M^{\tau,\omega}_{\tau+v}-M^{\tau,\omega}_{\tau+u}\big)\,f \big]
    =  E^{P}\big[\big(M_{\tau+v}-M_{\tau+u}\big)\,\hat{f}\,\big| \, \cF_\tau \big](\omega)
    =0
  \end{equation*}
  for $P$-a.e.\ $\omega \in \Omega$. This implies the martingale property of $M^{\tau,\omega}_{\tau+\cdot}$ as claimed.
\end{proof}

\begin{lemma}\label{le:localmarttransinv}
  Let $P \in \fP(\Omega)$, let $M$ be a right-continuous $P$-$\F$-local martingale having $P$-a.s.\ c\`adl\`ag paths and uniformly bounded jumps, and let $\tau$ be a finite $\F$-stopping time. Then $M^{\tau,\omega}_{\tau+\cdot}$ is a $P^{\tau,\omega}$-$\F$-local martingale for $P$-a.e.\ $\omega \in \Omega$.
\end{lemma}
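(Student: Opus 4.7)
The plan is to reduce the local martingale case to the uniformly integrable martingale case of Lemma~\ref{le:marttransinv} via a standard localization that exploits the uniformly bounded jumps. Let $K>0$ be such that $|\Delta M|\leq K$ $P$-a.s., and define
\[
  \sigma_n := \inf\{t\geq 0 : |M_t|\geq n\}, \quad n\in\N.
\]
Each $\sigma_n$ is an $\F$-stopping time with $\sigma_n\to\infty$ $P$-a.s., and since $|M^{\sigma_n}|\leq n+K$ $P$-a.s., each $M^{\sigma_n}$ is a bounded (hence uniformly integrable), right-continuous $P$-$\F$-martingale. Applying Lemma~\ref{le:marttransinv} to $M^{\sigma_n}$ for each $n\in\N$, and using the conditional probability identity $E^{P^{\tau,\omega}}[\xi^{\tau,\omega}]=E^P[\xi\mid\cF_\tau](\omega)$ with $\xi=\1_{\{\sigma_n\to\infty\}}$, yields a single $P$-nullset $N\subseteq\Omega$ such that for every $\omega\notin N$ and every $n\in\N$ the shifted process $(M^{\sigma_n})^{\tau,\omega}_{\tau+\cdot}$ is a $P^{\tau,\omega}$-$\F$-martingale, $\sigma_n(\omega)\to\infty$, and $P^{\tau,\omega}\{\sigma_n(\omega\otimes_\tau\cdot)\to\infty\}=1$.

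Fix such an $\omega\notin N$ and define
\[
  \tilde\sigma_n(\tilde\omega) := \bigl(\sigma_n(\omega\otimes_\tau\tilde\omega)-\tau(\omega)\bigr)^+.
\]
Galmarino's test implies that $\tilde\sigma_n$ is an $\F$-stopping time on $\Omega$, and by our choice of $\omega$ we have $\tilde\sigma_n\to\infty$ $P^{\tau,\omega}$-a.s. For $n$ large enough that $\sigma_n(\omega)>\tau(\omega)$, the identities $\tau(\omega\otimes_\tau\tilde\omega)=\tau(\omega)$ and $\sigma_n(\omega\otimes_\tau\tilde\omega)\geq\tau(\omega)$ yield, for every $s\geq0$,
\[
  (M^{\sigma_n})^{\tau,\omega}_{\tau+s}(\tilde\omega) = M_{(\tau(\omega)+s)\wedge\sigma_n(\omega\otimes_\tau\tilde\omega)}(\omega\otimes_\tau\tilde\omega) = M^{\tau,\omega}_{\tau+(s\wedge\tilde\sigma_n(\tilde\omega))}(\tilde\omega).
\]
Thus $(M^{\sigma_n})^{\tau,\omega}_{\tau+\cdot}$ is precisely the stopping of $M^{\tau,\omega}_{\tau+\cdot}$ at $\tilde\sigma_n$. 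As the left-hand side is already known to be a $P^{\tau,\omega}$-$\F$-martingale and $\tilde\sigma_n\to\infty$ $P^{\tau,\omega}$-a.s., the (tail of the) sequence $(\tilde\sigma_n)$ localizes $M^{\tau,\omega}_{\tau+\cdot}$, which is therefore a $P^{\tau,\omega}$-$\F$-local martingale as claimed.

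The only delicate points, and what I anticipate as the main technical obstacle, are the path-level manipulations: verifying that $\tilde\sigma_n$ is indeed an $\F$-stopping time on $\Omega$, identifying $(M^{\sigma_n})^{\tau,\omega}_{\tau+\cdot}$ with the stopped shifted process on the set where $\sigma_n(\omega)>\tau(\omega)$, and transferring the a.s.\ divergence $\sigma_n\to\infty$ under $P$ to the divergence of $\tilde\sigma_n$ under $P^{\tau,\omega}$ for $P$-a.e.\ $\omega$. Each of these rests on Galmarino's test and the regular conditional probability identity already used in Lemma~\ref{le:marttransinv}, so no new ingredients should be required.
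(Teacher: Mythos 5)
Your strategy --- localize upstairs by the exit times $\sigma_n=\inf\{t:|M_t|\geq n\}$, apply Lemma~\ref{le:marttransinv} to the bounded martingales $M^{\sigma_n}$, and identify the shifted stopped process with the stopped shifted process --- is close in spirit to the paper's (which also exploits the bounded jumps to build bounded localizers, but does so on the conditioned space). However, there is a genuine gap at the very first step: $\sigma_n$ is \emph{not} in general a stopping time of the raw filtration $\F$, and this section of the paper insists on the raw filtration. The set $\{\sigma_n\leq t\}=\{\exists\, s\leq t:|M_s|\geq n\}$ is a projection and need not lie in $\cF_t$; it differs from the $\cF_t$-measurable set $\{\sup_{s\leq t}|M_s|\geq n\}$ exactly on paths where the supremum is approached from the left without being attained (e.g.\ a path increasing to level $n$ on $[0,1)$ and jumping to $0$ at $t=1$). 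The standard remedies are either the d\'ebut theorem (which requires the usual conditions) or replacing $\sigma_n$ by $\inf\{t:|M_t|\geq n\mbox{ or }|M_{t-}|\geq n\}$; but the left limits of $M$ exist only $P$-a.s., so the latter produces stopping times of an augmented filtration only. Either way you land outside the raw filtration, Lemma~\ref{le:marttransinv} (stated for $P$-$\F$-martingales) no longer applies as written, and the step that brings the conclusion back to $\F$ is missing. That step is precisely the final paragraph of the paper's proof, which uses backward martingale convergence together with \cite[Theorem~3]{Dellacherie.78} and \cite[Theorem~IV.78, p.\,133]{DellacherieMeyer.78} to convert an $\F_{+}^{P^{\tau,\omega}}$-localizing sequence into an $\F$-localizing sequence; your proposal has no analogue of it. (The paper avoids the issue upstairs by localizing with the \emph{given} sequence $(T_m)$ of $\F$-stopping times and only introduces the bounded-exit times $\rho_n$ downstairs, where it explicitly treats them as stopping times of the augmentation.)

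The path-level manipulations you flag as the main obstacle (that $\tilde\sigma_n$ is a stopping time on the shifted space, the identification $(M^{\sigma_n})^{\tau,\omega}_{\tau+\cdot}=M^{\tau,\omega}_{\tau+(\cdot\wedge\tilde\sigma_n)}$ for $n$ with $\sigma_n(\omega)>\tau(\omega)$, and the transfer of $\sigma_n\to\infty$ to $\tilde\sigma_n\to\infty$) are indeed routine Galmarino-type arguments \emph{once} $\sigma_n$ is a genuine $\F$-stopping time. So the architecture of your proof is salvageable, but only after either importing the filtration-descent machinery the paper uses or reworking the localizers to include left limits and then dealing with the resulting null sets and augmented filtrations explicitly.
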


\begin{proof}
  Again, $M^{\tau,\omega}_{\tau+\cdot}$ is $\F$-adapted and has right-continuous paths for any $\omega \in \Omega$. Let $(T_m)_{m \in \N}$ be a localizing sequence of the $P$-$\F$-local martingale $M$ such that $T_m \leq m$. Since $T_m\to\infty$ $P$-a.s., we have that $T^{\tau,\omega}_m\to\infty$ $P^{\tau,\omega}$-a.s.\ for $P$-a.e.\ $\omega \in \Omega$. Moreover, by Lemma~\ref{le:marttransinv}, each process $M^{\tau,\omega}_{T_m\wedge(\tau+\cdot)}$ is a $P^{\tau,\omega}$-$\F$-martingale for $P$-a.e.\ $\omega \in \Omega$.

  Thus, there exists a subset $\Omega'\subseteq \Omega$ of full $P$-measure such that for all $\omega \in \Omega'$, we have the following three properties:
  $M^{\tau,\omega}_{\tau+\cdot}$ has c\`adl\`ag paths with uniformly bounded jumps $P^{\tau,\omega}$-a.s.,
  the process $M^{\tau,\omega}_{T_m\wedge(\tau+\cdot)}$  is a $P^{\tau,\omega}$-$\F$-martingale for all $m \in \N$,
  and $T^{\tau,\omega}_m\to \infty$ $P^{\tau,\omega}$-a.s. In what follows, we fix $\omega\in\Omega'$ and show that $M^{\tau,\omega}_{\tau+\cdot}$ is a $P^{\tau,\omega}$-$\F$-local martingale.
  Define
  \begin{align*}
  \rho_n&:=\inf\big\{ t\geq 0 \, \big| \, |M_{\tau+t}|\geq n \mbox{ or } |M_{\tau+t-}|\geq n \}\wedge n.
  \end{align*}
  Using that $M^{\tau,\omega}_{\tau+\cdot}$ has c\`adl\`ag paths $P^{\tau,\omega}$-a.s., we see that $\rho_n^{\tau,\omega}$ is a stopping time of $\F^{P^{\tau,\omega}}$, the augmentation of $\F$ under $P^{\tau,\omega}$, and that $\rho_n^{\tau,\omega}\to\infty$ $P^{\tau,\omega}$-a.s.
  Since $M^{\tau,\omega}_{\tau+\cdot}$ has uniformly bounded jumps $P^{\tau,\omega}$-a.s., we have that
  \[
    E^{P^{\tau,\omega}}\Big[ \sup_{t \geq 0} \big|M^{\tau,\omega}_{\tau+(\rho_n\wedge t)}\big|\Big]
   \leq n + E^{P^{\tau,\omega}}\Big[\big|\Delta M^{\tau,\omega}_{\tau+\rho_n}|\Big]< \infty
  \]
  for all $n$. Therefore, given $0 \leq u \leq v <\infty$, the dominated convergence theorem and the optional sampling theorem applied to the martingale $M^{\tau,\omega}_{T_m\wedge(\tau+\cdot)}$ and the stopping time $\rho_n^{\tau,\omega}$ yield that
 \begin{align*}
   E^{P^{\tau,\omega}}\Big[M^{\tau,\omega}_{\tau+(\rho_n\wedge v)} \, \Big| \, \cF^{P^{\tau,\omega}}_u \Big]
   &= E^{P^{\tau,\omega}}\Big[ \lim\limits_{m \to \infty} M^{\tau,\omega}_{T_m\wedge(\tau+(\rho_n\wedge v))}\, \Big| \, \cF^{P^{\tau,\omega}}_u\Big]\\
   &=\lim\limits_{m \to \infty} E^{P^{\tau,\omega}}\Big[  M^{\tau,\omega}_{T_m\wedge(\tau+(\rho_n\wedge v))}\, \Big| \, \cF^{P^{\tau,\omega}}_u\Big]\\
   &= \lim\limits_{m \to \infty} M^{\tau,\omega}_{T_m\wedge(\tau+(\rho_n\wedge u))}\\
   &= M^{\tau,\omega}_{\tau+(\rho_n\wedge u)}\quad P^{\tau,\omega}\as
  \end{align*}
  Thus, $M^{\tau,\omega}_{\tau+(\rho_n\wedge \cdot)}$ is a $P^{\tau,\omega}$-$\F^{P^{\tau,\omega}}$-uniformly integrable martingale for each $n \in \N$, meaning that $M^{\tau,\omega}_{\tau+\cdot}$ is a $P^{\tau,\omega}$-$\F^{P^{\tau,\omega}}$-local martingale, localized by the $\F^{P^{\tau,\omega}}$-stopping times $\rho_n^{\tau,\omega}$.

  It remains to return to the original filtration $\F$. Indeed, we first note that by a standard backward martingale convergence argument, $M^{\tau,\omega}_{\tau+\cdot}$ is also a $P^{\tau,\omega}$-$\F_+^{P^{\tau,\omega}}$-local martingale; cf.\ \cite[Lemma~II.67.10, p.\,173]{RogersWilliams.94}.
  It then follows from \cite[Theorem~3]{Dellacherie.78} that there exists an $\F_+^{P^{\tau,\omega}}$-predictable localizing sequence for this process, and this sequence can be further modified
  into an $\F$-localizing sequence by an application of \cite[Theorem~IV.78, p.\,133]{DellacherieMeyer.78}. Thus, $M^{\tau,\omega}_{\tau+\cdot}$ is an $\F$-adapted $P^{\tau,\omega}$-$\F_{+}^{P^{\tau,\omega}}$-local martingale with a localizing sequence of $\F$-stopping times. By the tower property of the conditional expectation, this actually means that
  $M^{\tau,\omega}_{\tau+\cdot}$ is a $P^{\tau,\omega}$-$\F$-local martingale, with the same localizing sequence. As $\omega\in\Omega'$ was arbitrary, the proof is complete.
\end{proof}

For the rest of this section, we will be concerned with the process
\begin{equation*}
\widetilde{X}_t:= X_t -\sum_{0 \leq s \leq t} \big[\Delta X_s- h(\Delta X_s)\big], \quad t\geq 0;
\end{equation*}
recall that $h$ is a fixed truncation function. The process $\widetilde{X}$ has uniformly bounded jumps and differs from $X$ by a finite variation process; in particular, $X$ is a $P$-$\F$-semimartingale if and only if $\widetilde{X}$ is a $P$-$\F$-semimartingale, for any $P \in \fP(\Omega)$. In fact, if $P \in \fP_{sem}$, then as $\widetilde{X}$ has bounded jumps, it is a special semimartingale with a canonical decomposition $\widetilde{X}= M + B$; here $M$ is a right-continuous $P$-$\F$-local martingale and $B$ is an $\F$-predictable process with paths which are right-continuous and $P$-a.s.\ of finite variation.

\begin{proposition} \label{prop:candecinv}
  Let $\tau$ be a finite $\F$-stopping time, let $P \in \fP_{sem}$ and let $\widetilde{X}= M + B$ be the $P$-$\F$-canonical decomposition of $\widetilde{X}$. For $P$-a.e.\ $\omega \in \Omega$, we have $P^{\tau,\omega} \in \fP_{sem}$ and the canonical decomposition of $\widetilde{X}$ under $P^{\tau,\omega}$ is given by
  \begin{equation}\label{eq:canonDecomp}
  \widetilde{X}
   =  \big(M^{\tau,\omega}_{\tau +\cdot}-M_{\tau(\omega)}(\omega) \big)
   + \big(B^{\tau,\omega}_{\tau+ \cdot}-B_{\tau(\omega)}(\omega)\big).
  \end{equation}
\end{proposition}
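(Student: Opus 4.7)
The plan is to derive the decomposition pathwise from the $P$-canonical decomposition $\widetilde X = M + B$, then verify that each summand retains the structural property required of it (local martingale, predictable finite variation) after conditioning, and finally appeal to the uniqueness of the canonical decomposition of the special semimartingale $\widetilde X$ under $P^{\tau,\omega}$. The starting observation is a purely pathwise identity. For any $\omega,\tilde\omega\in\Omega$, writing $\bar\omega:=\omega\otimes_\tau\tilde\omega$, the paths of $\omega$ and $\bar\omega$ agree on $[0,\tau(\omega)]$, while on $(\tau(\omega),\tau(\omega)+t]$ the jumps of $\bar\omega$ are in bijection with (and of the same sizes as) the jumps of $\tilde\omega$ on $(0,t]$. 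Using the definition of $\widetilde X$ and $\tilde\omega_0=0$, a direct bookkeeping of jumps yields
\[
  \widetilde X_t(\tilde\omega)=\widetilde X_{\tau(\omega)+t}(\omega\otimes_\tau\tilde\omega)-\widetilde X_{\tau(\omega)}(\omega)
\]
for every $(\omega,\tilde\omega,t)$. Since $\widetilde X=M+B$ holds $P$-a.s., the properties of the regular conditional probability $\{P^\omega_\tau\}$ imply that for $P$-a.e.\ $\omega$ the pointwise equality $\widetilde X = M + B$ persists $P^\omega_\tau$-a.s.\ on the full-measure set $\omega\otimes_\tau\Omega$; combined with the pathwise identity this delivers
\[
  \widetilde X= \bigl(M^{\tau,\omega}_{\tau+\cdot}-M_{\tau(\omega)}(\omega)\bigr)+\bigl(B^{\tau,\omega}_{\tau+\cdot}-B_{\tau(\omega)}(\omega)\bigr)\quad P^{\tau,\omega}\mbox{-a.s.}
\]

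Next, I would verify that $M':=M^{\tau,\omega}_{\tau+\cdot}-M_{\tau(\omega)}(\omega)$ is a $P^{\tau,\omega}$-$\F$-local martingale. Because $\widetilde X$ has jumps bounded by the truncation level and $\Delta B$ is (essentially) the predictable projection of $\Delta\widetilde X$, both $B$ and $M=\widetilde X-B$ have uniformly bounded jumps, so Lemma~\ref{le:localmarttransinv} applies to $M$ and gives that $M^{\tau,\omega}_{\tau+\cdot}$ is a $P^{\tau,\omega}$-$\F$-local martingale for $P$-a.e.\ $\omega$; subtracting the $\tilde\omega$-independent constant $M_{\tau(\omega)}(\omega)$ preserves this property. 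For the other summand $B':=B^{\tau,\omega}_{\tau+\cdot}-B_{\tau(\omega)}(\omega)$, the $P^{\tau,\omega}$-a.s.\ finite variation of paths descends from the $P$-a.s.\ property of $B$, while $\F$-predictability of $B'$ follows from a monotone-class argument on generators $(s,u]\times A$ of the predictable $\sigma$-field with $A\in\cF_s$: by the structure of the concatenation, $\tilde\omega\mapsto\1_A(\omega\otimes_\tau\tilde\omega)$ is $\cF_{(s-\tau(\omega))\vee 0}$-measurable, and the shifted time window $(s-\tau(\omega),u-\tau(\omega)]\cap\R_+$ produces an $\F$-predictable rectangle on $\R_+\times\Omega$. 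Finally, predictability of $B$ makes $B_\tau$ measurable with respect to $\cF_{\tau-}$, which is invariant under $\omega\otimes_\tau\cdot$; hence $B_{\tau(\omega)}(\omega\otimes_\tau\tilde\omega)=B_{\tau(\omega)}(\omega)$ for every $\tilde\omega$, yielding $B'_0=0$, and since $\widetilde X_0=0$ the decomposition forces $M'_0=0$ as well.

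With $M'$ a $P^{\tau,\omega}$-$\F$-local martingale, $B'$ an $\F$-predictable process of finite variation, and $M'_0=B'_0=0$, the uniqueness of the canonical decomposition of the special semimartingale $\widetilde X$ under $P^{\tau,\omega}$ identifies $M'+B'$ as that decomposition, and in particular yields $P^{\tau,\omega}\in\fP_{sem}$ for $P$-a.e.\ $\omega$. I expect the main technical friction to lie in the predictability transfer for $B'$: carrying out the monotone-class extension across the concatenation map while correctly handling the case $s\leq\tau(\omega)$ (where the time window must be truncated at $0$ and the generator can degenerate) requires some care, although it is conceptually routine given that the raw filtration $\F$ behaves well with respect to $\omega\otimes_\tau\cdot$ by virtue of Galmarino-type arguments.
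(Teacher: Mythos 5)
Your proposal is correct and follows essentially the same route as the paper: transfer the identity $\widetilde X=M+B$ to $P^{\tau,\omega}$, apply Lemma~\ref{le:localmarttransinv} to $M$ (whose jumps are bounded because $\Delta B$ is the predictable projection of $\Delta\widetilde X$, as in \cite[Proposition~I.4.24]{JacodShiryaev.03}), and check predictability and finite variation of the shifted $B$. The only cosmetic difference is that the paper obtains predictability of $B^{\tau,\omega}_{\tau+\cdot}-B_{\tau(\omega)}(\omega)$ by noting it is adapted to $\F_-$ and citing \cite[Theorem~IV.97]{DellacherieMeyer.78}, whereas you run a monotone-class argument over predictable rectangles; both are standard and valid.
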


\begin{proof}
  The right-continuous processes $B^{\tau,\omega}_{\tau+ \cdot}-B_{\tau(\omega)}(\omega)$ and $M^{\tau,\omega}_{\tau+ \cdot}-M_{\tau(\omega)}(\omega)$ are $\F$-adapted by Galmarino's test. As $\widetilde{X}$ is a $P$-$\F$-semimartingale with uniformly bounded jumps and $B$ has $P$-a.s.\ c\`adl\`ag paths, it follows that $M$ has $P$-a.s.\ c\`adl\`ag paths with uniformly bounded jumps; cf.\ \cite[Proposition~I.4.24, p.\,44]{JacodShiryaev.03}. Thus, we conclude from Lemma \ref{le:localmarttransinv} that for $P$-a.e.\ $\omega \in \Omega$,  $M^{\tau,\omega}_{\tau +\cdot}-M_{\tau(\omega)}(\omega)$ is a $P^{\tau,\omega}$-$\F$-local martingale. Moreover, we see that $B^{\tau,\omega}_{\tau+ \cdot}-B_{\tau(\omega)}(\omega)$ has $P^{\tau,\omega}$-a.s.\ finite variation paths for $P$-a.e.\ $\omega \in \Omega$; finally, it is adapted to the left-continuous filtration $\F_-=(\cF_{t-})_{t\geq0}$ and therefore $\F$-predictable as a consequence of \cite[Theorem~IV.97, p.\,147]{DellacherieMeyer.78}. We observe that~\eqref{eq:canonDecomp} holds identically, due to the definition of $\widetilde{X}$ and the fact that $X$ is the canonical process. As remarked above, this decomposition also implies that $X$ is a semimartingale under $P$.
\end{proof}

Next, we focus on the third characteristic. For ease of reference, we first state two simple lemmas.

\begin{lemma}\label{le:predcandinv}
  Let $W$ be a $\cP \otimes \B(\R^d)$-measurable function, let $\tau$ be a finite $\F$-stopping time and $\omega \in \Omega$. There exists a $\cP \otimes \B(\R^d)$-measurable function $\widetilde{W}$ such that
  \begin{equation}\label{eq:predcandinv}
    \widetilde{W}(\omega\otimes_\tau \tilde{\omega},\tau(\omega)+s,z)=W(\tilde{\omega},s,z), \ \ \ (\tilde{\omega},s,z) \in \Omega \times (0,\infty) \times \R^d.
  \end{equation}
  Moreover, if $W\geq 0$, one can choose $\widetilde{W}\geq 0$.
\end{lemma}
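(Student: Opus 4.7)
The plan is to define $\widetilde W$ explicitly via a shift operation and then verify the required measurability by a functional monotone class argument. Since $\omega$ is fixed, $T:=\tau(\omega)$ is a deterministic real number; introduce the Borel measurable map $\Psi_T:\Omega\to\Omega$ given by $\Psi_T(\omega')_u := \omega'_{T+u}-\omega'_T$, $u\geq 0$ (this lands in $\Omega$ since $\omega'$ is c\`adl\`ag and $\Psi_T(\omega')_0=0$). Set
\[
  \widetilde W(\omega',t,z):=W\bigl(\Psi_T(\omega'),\,t-T,\,z\bigr)\,\1_{\{t>T\}},\qquad (\omega',t,z)\in\Omega\times(0,\infty)\times\R^d,
\]
with the convention that the right-hand side is $0$ for $t\leq T$.

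For the identity~\eqref{eq:predcandinv}, observe that for any $u\geq0$,
\[
  \Psi_T(\omega\otimes_\tau\tilde\omega)_u=(\omega\otimes_\tau\tilde\omega)_{T+u}-(\omega\otimes_\tau\tilde\omega)_T=(\omega_T+\tilde\omega_u)-\omega_T=\tilde\omega_u,
\]
so $\Psi_T(\omega\otimes_\tau\tilde\omega)=\tilde\omega$; plugging in $t=T+s$ with $s>0$ yields the required formula. The non-negativity assertion is immediate from the definition.

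The key step is the $\cP\otimes\B(\R^d)$-measurability of $\widetilde W$. I would take as generating $\pi$-system for $\cP\otimes\B(\R^d)$ the rectangles $F\times(s,t]\times B$ with $0\leq s<t$, $F\in\cF_s$ and $B\in\B(\R^d)$. For $W=\1_{F}\1_{(s,t]}\1_{B}$ the construction yields
\[
  \widetilde W(\omega',u,z)=\1_{\Psi_T^{-1}(F)}(\omega')\,\1_{(T+s,T+t]}(u)\,\1_{B}(z),
\]
and one checks via Galmarino's test that $\Psi_T^{-1}(F)\in\cF_{T+s}$, since $\Psi_T(\omega')|_{[0,s]}$ depends on $\omega'$ only through its restriction to $[0,T+s]$; hence this product of indicators lies in $\cP\otimes\B(\R^d)$. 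Closing the class of $W$ satisfying the conclusion under linear combinations and bounded monotone pointwise limits, the functional monotone class theorem upgrades the claim to all bounded $\cP\otimes\B(\R^d)$-measurable functions, and finally the decomposition $W=W^+-W^-$ with truncations $W^{\pm}\wedge n$ handles the unbounded case. The only real subtlety is that the shift must preserve the predictability structure, and this reduces entirely to the elementary verification that $\Psi_T^{-1}(F)\in\cF_{T+s}$ for $F\in\cF_s$.
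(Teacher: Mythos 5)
Your construction of $\widetilde W$ is exactly the one used in the paper, and your measurability argument is the same monotone-class strategy; the only (immaterial) difference is that the paper verifies predictability of the building blocks via the criterion ``measurable and $\F_-$-adapted,'' whereas you check directly that the shifted generating rectangles $F\times(s,t]$ with $F\in\cF_s$ map to predictable rectangles via Galmarino's test. The proof is correct and essentially identical to the paper's.
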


\begin{proof}
  Consider the function
  \begin{equation*}
    \widetilde{W}(\bar{\omega},s,z):= W\big(\bar\omega_{\cdot+\tau(\omega)}-\bar\omega_{\tau(\omega)},s-\tau(\omega),z\big)\,\1_{s>\tau(\omega)};
  \end{equation*}
  then~\eqref{eq:predcandinv} holds by definition. To show that $\widetilde{W}$ is $\cP \otimes \B(\R^d)$-measurable, we may first use the monotone class theorem to reduce to the case where $W$ is a product $W(\omega,t,x)=g(\omega,t)f(x)$. Using again the fact that a process is $\F$-predictable if and only if it is measurable and adapted to $\F_-$, cf.\ \cite[Theorem~IV.97, p.\,147]{DellacherieMeyer.78}, we then see that the predictability of $W$ implies the predictability of $\widetilde{W}$.
\end{proof}

\begin{lemma}\label{le:condexpinv}
   Let $P \in \fP(\Omega)$, let $\tau$ be a finite $\F$-stopping time and let $\nu$ be the $P$-$\F$-compensator of $\mu^X$. Then, for any $\cP \otimes \B(\R^d)$-measurable function $W\geq 0$, we have $P$-a.s.\ that
  \begin{equation*}
    E^P\Big[\int_\tau^\infty W(\cdot,s,z)\, \mu^X(\cdot,ds, dz) \, \Big| \, \cF_\tau\Big] =
    E^P\Big[\int_\tau^\infty W(\cdot,s,z)\, \nu(\cdot,ds, dz) \, \Big| \, \cF_\tau\Big].
  \end{equation*}
\end{lemma}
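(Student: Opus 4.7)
The plan is to reduce the statement to the defining property of the $P$-$\F$-compensator $\nu$ of $\mu^X$, namely that
\[
E^P\big[U \ast \mu^X_\infty\big] = E^P\big[U \ast \nu_\infty\big]
\]
for every nonnegative $\cP\otimes\B(\R^d)$-measurable function $U$ on $\Omega\times \R_+\times\R^d$; cf.~\cite[Theorem~II.1.8, p.\,66]{JacodShiryaev.03}. Since both sides of the asserted equality are $\cF_\tau$-measurable and $[0,\infty]$-valued, it suffices to show that for every $A\in\cF_\tau$,
\[
  E^P\Big[\1_A\int_\tau^\infty W(\cdot,s,z)\, \mu^X(ds,dz)\Big] = E^P\Big[\1_A\int_\tau^\infty W(\cdot,s,z)\, \nu(ds,dz)\Big].
\]

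To obtain this from the compensator property, I would set
\[
  U(\omega,s,z) := \1_A(\omega)\,\1_{(\tau(\omega),\infty)}(s)\,W(\omega,s,z)
\]
and verify that $U$ is $\cP\otimes\B(\R^d)$-measurable. The only nontrivial point is the predictability of the process $(\omega,s)\mapsto \1_A(\omega)\,\1_{\{s>\tau(\omega)\}}$: it is adapted because $A\in\cF_\tau$ implies $A\cap\{\tau<s\}\in\cF_s$ for every $s\geq 0$, and it is left-continuous in $s$ (note that the jump of $\1_{(\tau,\infty)}$ occurs immediately after $\tau$, not at $\tau$). Hence $\1_A\,\1_{(\tau,\infty)}$ is predictable, and multiplying by the predictable $W$ preserves this. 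Applying the compensator identity to $U\geq 0$ yields the displayed equality, and since $A\in\cF_\tau$ is arbitrary the lemma follows.

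The only subtle point is the predictability verification, and in particular the convention that $\int_\tau^\infty$ denotes $\int_{(\tau,\infty)}$: the stochastic interval $[\tau,\infty)$ is in general not predictable unless $\tau$ is a predictable stopping time, so one must be slightly careful about the endpoint $s=\tau(\omega)$. Once this is settled, the rest is a direct application of the defining property of $\nu$, which is why the proof is expected to be very short.
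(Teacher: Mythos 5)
Your proof is correct and takes essentially the same route as the paper's: there, one tests against $A\in\cF_\tau$ and applies the defining property of the compensator to $\overline{W}:=W\,\1_{]\!]\tau_A,\infty[\![}$ with $\tau_A:=\tau\1_A+\infty\1_{A^c}$, whose indicator is exactly your $\1_A(\omega)\1_{(\tau(\omega),\infty)}(s)$ and is predictable as the indicator of a left-open stochastic interval. Your hands-on verification of predictability (adapted plus left-continuous in $s$) and your reading of $\int_\tau^\infty$ as the integral over $(\tau,\infty)$ agree with the paper's conventions, so nothing is missing.
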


\begin{proof}
  By the definition of the compensator, we have
  \begin{equation*}
    E^P\Big[\int_0^\infty \overline{W}(\cdot,s,z)\, \mu^X(\cdot,ds, dz) \Big] =
    E^P\Big[\int_0^\infty \overline{W}(\cdot,s,z)\, \nu(\cdot,ds, dz) \Big]
  \end{equation*}
  for any $\cP \otimes \B(\R^d)$-measurable function $\overline{W}\geq 0$. Let $A \in \cF_\tau$ and define the $\F$-stopping time $\tau_A:=\tau\,\1_{A} + \infty \,\1_{A^c}$; then the claim follows by applying this equality to the function $\overline{W}:=W\,\1_{]\!]\tau_A,\infty[\![}$.
\end{proof}

\begin{proposition}\label{prop:compjumpinv}
  Let $P \in \fP(\Omega)$, let $\nu$ be the $P$-$\F$-compensator of $\mu^X$ and let $\tau$ be a finite $\F$-stopping time. Then, for $P$-a.e.\ $\omega \in \Omega$, the $P^{\tau,\omega}$-$\F$-compensator of $\mu^X$ is given by the random measure
  \begin{align*}
    D\mapsto \int_{\tau(\omega)}^{\infty} \int_{\R^d} \1_D(s-\tau(\omega),z) \, \nu(\omega\otimes_\tau\cdot,ds,dz),\quad \! D\in\cB(\R_+)\otimes \cB(\R^d).
  \end{align*}
\end{proposition}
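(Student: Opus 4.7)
Write $\hat\nu^\omega$ for the random measure appearing in the statement. To identify it with the $P^{\tau,\omega}$-$\F$-compensator of $\mu^X$, the plan is to verify the two characterizing properties of a compensator: (i) $\hat\nu^\omega$ is $\F$-predictable, and (ii) $E^{P^{\tau,\omega}}[W*\mu^X_\infty]=E^{P^{\tau,\omega}}[W*\hat\nu^\omega_\infty]$ for every nonnegative $\cP\otimes\B(\R^d)$-measurable $W$ on $\Omega\times\R_+\times\R^d$. The key tool is Lemma~\ref{le:predcandinv}, which attaches to each such $W$ a predictable lift $\widetilde W$ on $\Omega\times\R_+\times\R^d$ satisfying $\widetilde W(\omega\otimes_\tau\tilde\omega,\tau(\omega)+s,z)=W(\tilde\omega,s,z)$ for $s>0$ and $\widetilde W\equiv 0$ on $\{s\leq\tau(\omega)\}$. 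This vanishing at the concatenation time is convenient bookkeeping that absorbs the (possibly non-trivial) jump of $\omega$ at $\tau$.

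Unwinding the definitions, a direct computation gives the two pathwise identities
\[
  \int_0^t\!\!\int W(\tilde\omega,s,z)\,\hat\nu^\omega(\tilde\omega,ds,dz)=(\widetilde W*\nu)_{\tau(\omega)+t}(\omega\otimes_\tau\tilde\omega)
\]
and, identifying the jumps of $\tilde\omega$ on $(0,\infty)$ with those of $\omega\otimes_\tau\tilde\omega$ on $(\tau(\omega),\infty)$, the analogous $(W*\mu^X_\infty)(\tilde\omega)=(\widetilde W*\mu^X_\infty)(\omega\otimes_\tau\tilde\omega)$. Since $\widetilde W*\nu$ is $\F$-predictable on $\Omega\times\R_+$, i.e.\ measurable and adapted to $\F_-$, an application of Galmarino's test to the concatenation map $\tilde\omega\mapsto\omega\otimes_\tau\tilde\omega$ shows that the right-hand side of the display is $\F$-predictable in $(\tilde\omega,t)$; this is a direct transcription of the shift argument used inside the proof of Lemma~\ref{le:predcandinv} and yields~(i).

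For~(ii), I apply $E^{P^{\tau,\omega}}$ to both sides of the two pathwise identities above and use the standard relation $E^{P^{\tau,\omega}}[\xi^{\tau,\omega}]=E^P[\xi\,|\,\cF_\tau](\omega)$. The required equality of expectations then reduces to
\[
  E^P\big[\widetilde W*\mu^X_\infty\,\big|\,\cF_\tau\big](\omega)=E^P\big[\widetilde W*\nu_\infty\,\big|\,\cF_\tau\big](\omega),
\]
which, because $\widetilde W$ is supported on $\{s>\tau\}$, is precisely Lemma~\ref{le:condexpinv} applied to $\widetilde W$.

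The final step is to remove the $W$-dependence of the exceptional $P$-null set. For this I would fix a countable $\pi$-system generating $\cP\otimes\B(\R^d)$ --- for instance predictable rectangles $(u,v]\times F\times B$ with $u,v\in\Q_+$, with $F$ running through a countable generating algebra of $\cF_u$ and $B$ through a countable basis of $\cB(\R^d)$ --- apply the argument above to the indicator of each generator to obtain a single $P$-null set outside of which the identity holds simultaneously for every generator, and then extend to all nonnegative $\cP\otimes\B(\R^d)$-measurable $W$ by the monotone class theorem. The most delicate step of the proof is the predictability check in~(i) under the raw filtration, but thanks to the construction of $\widetilde W$ it cleanly reduces to Galmarino's test for $\F_-$, exactly as in Lemma~\ref{le:predcandinv}.
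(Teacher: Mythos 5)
Your proposal is correct and follows essentially the same route as the paper: the same pathwise change-of-variables identities via the lift $\widetilde W$ of Lemma~\ref{le:predcandinv}, predictability of the shifted random measure via the measurable-and-$\F_-$-adapted characterization, and the expectation identity via Lemma~\ref{le:condexpinv} applied to $\widetilde W$. Your explicit countable-generating-class step to obtain a single exceptional $P$-null set is a detail the paper leaves implicit in its ``as $W\geq 0$ was arbitrary'' conclusion, but it does not constitute a different approach.
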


\begin{proof}
  Denote by $\nu^{\omega}(\cdot,ds,dz)$ the above random measure. To see that it is $\F$-predictable, let $W$ be a $\cP \otimes \B(\R^d)$-measurable function. If $\widetilde{W}$ is as in Lemma~\ref{le:predcandinv}, then
  \begin{align*}
  & \ \int_0^t \int_{\R^d} W(\tomega,s,z)\,\nu^{\omega}(\tomega,ds,dz)\\
  = & \ \int_{\tau(\omega)}^{\tau(\omega)+t} \int_{\R^d} \widetilde{W}(\omega\otimes_\tau\tilde{\omega},s,z)\, \nu(\omega\otimes_\tau\tomega,ds,dz), \ \ \ \ (\tilde{\omega},t) \in \Omega \times (0,\infty)
  \end{align*}
  and the latter process is $\F$-predictable as a consequence of \cite[Theorem~IV.97, p.\,147]{DellacherieMeyer.78} and the fact that $\nu$ is an $\F$-predictable random measure. Thus, $\nu^{\omega}(\cdot,ds,dz)$ is $\F$-predictable for every $\omega\in\Omega$.

  Let $W\geq0$ be a $\cP \otimes \B(\R^d)$-measurable function and let $\widetilde{W}\geq 0$ be as in Lemma \ref{le:predcandinv}. Using the identity $X_\cdot=X^{\tau,\omega}_{\tau+\cdot}-X_\tau(\omega)$ and Lemma~\ref{le:condexpinv}, we obtain for $P$-a.e.\ $\omega \in \Omega$ that
  \begin{align*}
  & \ E^{P^{\tau,\omega}}\bigg[\int_0^\infty \int_{\R^d} W(\cdot,s,z)\,\mu^X(\cdot,ds,dz) \bigg]\\
  =& \ E^{P^{\tau,\omega}}\bigg[\int_0^\infty \int_{\R^d} \widetilde{W}(\omega\otimes_\tau\cdot,\tau(\omega) +s,z)\,\mu^{X^{\tau,\omega}_{\tau+\cdot}}(\cdot,ds,dz) \bigg]\\
  =& \ E^{P}\bigg[\int_\tau^\infty \int_{\R^d} \widetilde{W}(\cdot,s,z)\,\mu^{X}(\cdot,ds,dz) \, \bigg|\,\cF_\tau\bigg](\omega)\\
  =& \ E^{P}\bigg[\int_\tau^\infty \int_{\R^d} \widetilde{W}(\cdot,s,z)\,\nu(\cdot,ds,dz) \, \bigg|\,\cF_\tau\bigg](\omega)\\
  =& \ E^{P^{\tau,\omega}}\bigg[\int_{\tau(\omega)}^\infty \int_{\R^d} \widetilde{W}(\omega\otimes_\tau\cdot,s,z)\, ,\nu(\omega\otimes_\tau\cdot,ds,dz) \bigg]\\
  =& \ E^{P^{\tau,\omega}}\bigg[\int_0^\infty \int_{\R^d} W(\cdot,s,z)\,\nu^{\omega}(\cdot,ds,dz)\bigg].
  \end{align*}
  As $W\geq 0$ was arbitrary, it follows that $\nu^{\omega}(\cdot,ds,dz)$ is the $P^{\tau,\omega}$-$\F$-compensator of $\mu^X$ for $P$-a.e.\ $\omega \in \Omega$; cf.\ \cite[Theorem II.1.8, p.\,66]{JacodShiryaev.03}.
\end{proof}

We can now complete the proof of the main result of this section.

\begin{proof}[Proof of Theorem~\ref{th:charactinv}.]
  Let $P \in \fP_{sem}$ and let $\tau$ be a finite $\F$-stopping time. The formula $B^{\tau,\omega}_{\tau+ \cdot}-B_{\tau(\omega)}(\omega)$ for the first characteristic follows from Proposition~\ref{prop:candecinv} and the very definition of the first characteristic, whereas the formula for the third characteristic follows from Proposition~\ref{prop:compjumpinv} and the decomposition~\eqref{eq:nuDecomp}. Turning to the second characteristic, we recall from~\cite[Theorem~2.5]{NeufeldNutz.13a} that there exists an $\F$-predictable process $\hat{C}$ with the property that for any $P'\in\fP_{sem}$, $\hat{C}$ coincides $P'$-a.s.\ with the quadratic variation of the continuous local martingale part of $X$ under $P'$. The process $\hat{C}$ is constructed by subtracting the squared jumps of $X$ from the quadratic covariation $[X]$ which, in turn, is constructed in a purely pathwise fashion; moreover, the increments of $\hat{C}$ depend only on the increments of $X$. More precisely, it follows from the construction of $\hat{C}$ in the proof of \cite[Proposition~6.6(i)]{NeufeldNutz.13a} that for $P$-a.e.\ $\omega\in\Omega$,
  \[%
    \hat{C} = \hat{C}_{\tau + \cdot}^{\tau,\omega}-\hat{C}_{\tau(\omega)}(\omega)\quad P^{\tau,\omega}\as
  \]%
  Since the above holds in particular for $P'=P$ and $P'=P^{\tau,\omega}$, and since $\hat{C}$ is $\F$-predictable, this already yields the formula for the second characteristic under $P^{\tau,\omega}$. Finally, we observe that the assertion about $P\in\fP^{ac}_{sem}$ is a consequence of the general case that we have just established.
\end{proof}

\section{Products of Semimartingale Laws and~(A3)}\label{se:A3}

In this section, we first show that the product $\bar{P}$ of a semimartingale law $P\in\fP_{sem}$ and a $\fP_{sem}$-valued kernel $\kappa$ is again a semimartingale law. Then, we describe the associated characteristics and deduce the validity of Condition~(A3) for $\fP_\Theta$. While the naive way to proceed would be to construct directly the semimartingale decomposition under $\bar{P}$, some technical issues arise as soon as $\kappa$ has uncountably many values. For that reason, the first step will be achieved in a more abstract way using the Bichteler--Dellacherie criterion. Once the semimartingale property  for $\bar{P}$ is established, we know that the associated decomposition and characteristics exist and we can study them using the results of the previous section.

\begin{proposition}\label{prop:stabsem}
  Let $\tau$ be a finite $\F$-stopping time and let $P \in \fP_{sem}$. Moreover, let $\kappa: \Omega \to \fP(\Omega)$ be an $\cF_\tau$-measurable kernel with $\kappa(\omega) \in \fP_{sem}$ for $P$-a.e.\ $\omega \in \Omega$. Then, the  measure  $\bar{P}$ defined by
  \begin{equation*}
  \bar{P}(D):= \iint \1_D^{\tau,\omega}(\omega')\,\kappa(\omega, d\omega')\,P(d\omega), \ \ \ D \in \cF
  \end{equation*}
  is an element of $\fP_{sem}$.
\end{proposition}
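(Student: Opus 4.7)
The plan is to invoke the Bichteler--Dellacherie criterion: to show $\bar{P}\in\fP_{sem}$ it suffices to verify that, for every $t>0$ and $\varepsilon>0$, there is $K>0$ such that $\bar{P}(|(H\cdot X)_t|>K)<\varepsilon$ for every simple predictable $H$ with $|H|\leq 1$. By a standard approximation I may restrict the quantifier to a countable sufficient family $\mathcal{H}$ (say step processes on rational grids with rational coefficients), which later makes the relevant suprema measurable. Fix $t,\varepsilon$ and such an $H$ and split the integral at $\tau$: $(H\cdot X)_t=A_t+B_t$, where $A_t:=(H\cdot X)_{\tau\wedge t}$.

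For the pre-$\tau$ term I first observe that $\bar{P}|_{\cF_\tau}=P|_{\cF_\tau}$; indeed, for $D\in\cF_\tau$ Galmarino's test gives $\1_D^{\tau,\omega}\equiv\1_D(\omega)$, so $\bar{P}(D)=\int\1_D(\omega)\,P(d\omega)=P(D)$. Since $A_t$ is $\cF_{\tau\wedge t}\subseteq\cF_\tau$-measurable, its law under $\bar{P}$ coincides with its law under $P$, and Bichteler--Dellacherie applied to the $P$-semimartingale $X$ yields a constant $K_1$, depending only on $\varepsilon$ and $t$, with $\bar{P}(|A_t|>K_1)<\varepsilon/2$ uniformly in $H$.

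For the post-$\tau$ term the disintegration
\[
  \bar{P}(|B_t|>K)=\int_\Omega\kappa(\omega)\big(\{|B_t(\omega\otimes_\tau\cdot)|>K\}\big)\,P(d\omega)
\]
is the starting point. Using $X_s(\omega\otimes_\tau\omega')-X_{\tau(\omega)}(\omega\otimes_\tau\omega')=X_{s-\tau(\omega)}(\omega')$ for $s\geq\tau(\omega)$, I check that $\omega'\mapsto B_t(\omega\otimes_\tau\omega')$ equals $(H^\omega\cdot X)_{(t-\tau(\omega))^+}(\omega')$, where $H^\omega$ is the simple predictable process obtained by shifting the grid of $H$ by $-\tau(\omega)$, collapsing points at or below $0$, and setting $H_i^\omega(\omega'):=H_i(\omega\otimes_\tau\omega')$; the $\F$-predictability of $H^\omega$ and the bound $|H^\omega|\leq 1$ follow from Galmarino's test, exactly as in Lemma~\ref{le:predcandinv}. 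Defining
\[
  \phi(\omega,K):=\sup_{H'\in\mathcal{H}}\kappa(\omega)\big(\{|(H'\cdot X)_{(t-\tau(\omega))^+}|>K\}\big),
\]
the function $\phi(\cdot,K)$ is measurable, and for $P$-a.e.\ $\omega$ Bichteler--Dellacherie under $\kappa(\omega)\in\fP_{sem}$ yields $\phi(\omega,K)\to 0$ as $K\to\infty$. Since $\phi\leq 1$, dominated convergence gives $K_2$ with $\int\phi(\cdot,K_2)\,dP<\varepsilon/2$, hence $\bar{P}(|B_t|>K_2)<\varepsilon/2$. Combining, $\bar{P}(|(H\cdot X)_t|>K_1+K_2)<\varepsilon$ uniformly in $H$, and Condition~(A3) for $\fP_\Theta$ will follow from this together with the explicit characteristic formulas of Theorem~\ref{th:charactinv} and the definition of $\fP_\Theta$.

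The main obstacle I anticipate is the measurability and predictability bookkeeping for the shifted integrand $H^\omega$: each coefficient $H_i$ is only $\cF_{s_i}$-measurable, so after the shift the slice $H_i(\omega\otimes_\tau\cdot)$ must be shown to be $\cF_{(s_i-\tau(\omega))^+}$-measurable, and the resulting process must be genuinely $\F$-predictable on $\Omega$ — this is precisely the Galmarino-based mechanism already exploited in Lemma~\ref{le:predcandinv}, and it is also what forces the countable sufficient family $\mathcal{H}$ to be fixed at the outset so that $\phi(\cdot,K)$ is a countable supremum. Everything else reduces to applying the Bichteler--Dellacherie criterion to the two semimartingale measures $P$ and $\kappa(\omega)$ in turn.
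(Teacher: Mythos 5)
Your overall strategy --- Bichteler--Dellacherie, splitting the elementary integral at $\tau$, using $\bar P=P$ on $\cF_\tau$ for the pre-$\tau$ part and the disintegration $\bar P^{\tau,\omega}=\kappa(\omega)$ together with the criterion under $\kappa(\omega)$ for the post-$\tau$ part --- is exactly the architecture of the paper's proof. However, your implementation via the ``boundedness in probability uniformly over $|H|\leq1$'' form of the criterion has a genuine gap at the final step. You define $\phi(\omega,K)$ as a supremum over the countable family $\mathcal{H}$ fixed at the outset, but the shifted integrand $H^\omega$ is \emph{not} an element of $\mathcal{H}$: its grid points $(t_i-\tau(\omega))^+$ are in general irrational, and its coefficients $H_i(\omega\otimes_\tau\cdot)$ are the $\omega$-slices of the original (random) coefficients, not members of any pre-selected countable list. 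Hence the inequality $\kappa(\omega)\bigl(\{|B_t(\omega\otimes_\tau\cdot)|>K_2\}\bigr)\leq\phi(\omega,K_2)$, on which your concluding ``hence'' rests, is unjustified. To repair it you would have to either take the supremum over \emph{all} elementary processes bounded by $1$ (and then face the measurability of an uncountable supremum in $\omega$), or prove a measure-independent density statement for $\mathcal{H}$ strong enough to dominate the $\kappa(\omega)$-probabilities of these particular shifted integrands for $P$-a.e.\ $\omega$ simultaneously; neither is done, and the preliminary reduction to $\mathcal{H}$ under $\bar P$ itself (``standard approximation'') is also nontrivial because the coefficients $h_i$ are $\cF_{t_{i-1}}$-measurable random variables rather than constants.

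The paper sidesteps this entirely by using the \emph{sequential} form of the criterion: one fixes a single sequence of elementary processes $H^n\to0$ uniformly, shows $E^{\bar P}[|\sum_i h^n_i\Delta X|\wedge1\mid\cF_\tau](\omega)=E^{\kappa(\omega)}[|\,(H^n)^{\tau,\omega}\!\cdot X|\wedge1]\to0$ for $P$-a.e.\ $\omega$ by the reverse implication under $\kappa(\omega)$ (uniform convergence to zero is preserved by the shift, so no density or uniformity over a class of integrands is needed), and concludes by dominated convergence; measurability in $\omega$ is automatic because one only ever takes conditional expectations of a fixed sequence. I suggest you recast your argument in that form: your pre-$\tau$/post-$\tau$ decomposition and your identification of $B_t(\omega\otimes_\tau\omega')$ with a shifted elementary integral carry over verbatim, and the problematic supremum disappears.
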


\begin{proof}
  We recall that the $\bar{P}$-semimartingale property in $\F$ is equivalent to the one in the usual augmentation $\F^{\bar{P}}_{+}$; cf.\ \cite[Proposition~2.2]{NeufeldNutz.13a}. We shall use the Bichteler--Dellacherie criterion \cite[Theorem~VIII.80, p.\,387]{DellacherieMeyer.82} to establish the latter; namely, we show that
  if
  \begin{equation*}
    H^n=\sum_{i=1}^{k^n} h^n_i \, \1_{(t^n_{i-1},t^n_i]},\quad n\geq1
  \end{equation*}
  is a sequence of $\F^{\bar{P}}_{+}$-elementary processes such that $ H^n(t,\omega)\to0$ uniformly in $(t,\omega)$, then
  \[%
  \lim\limits_{n \to \infty} E^{\bar{P}}\bigg[\Big|\sum_{i=1}^{k^n}  h^n_i (X_{t^n_{i}\wedge t}-X_{t^n_{i-1} \wedge t})\Big| \wedge 1 \bigg] = 0,\quad t\geq0.
  \]%
  In fact, as $\bar{P}=P$ on $\cF_\tau$, it is clear that $X\,\1_{[\![0,\tau[\![}$ is a semimartingale and so it suffices to verify the above property for $X\,\1_{[\![\tau,\infty[\![}$ instead of $X$. To that end, by dominated convergence, it suffices to show that for $\bar{P}$-a.e.\ $\omega \in \Omega$,
  \[
   \lim_{n \to \infty} E^{\bar{P}}\bigg[\Big|\sum_{i=1}^{k^n}  h^n_i \big(X_{t^n_{i}\wedge t}\,\1_{\{\tau\leq t^n_{i} \wedge t\}} - X_{t^n_{i-1}\wedge t} \,\1_{\{\tau\leq t^n_{i-1} \wedge t\}} \big)\Big| \wedge 1\,\bigg| \, \cF_\tau \bigg](\omega) = 0,
  \]
  where $t\geq0$ is fixed. Define the $\cF_\tau$-measurable random variable $j^n$ by
  \[
    j^n:=\inf \{0\leq j\leq k^n|\, t_j^n\wedge t \geq \tau(\omega)\} \wedge k^n.
  \]
  Writing the above limit as a sum of two terms, it then suffices to show that for $\bar{P}$-a.e.\ $\omega \in \Omega$,
  \begin{align}
  \lim\limits_{n \to \infty} & \ E^{\bar{P}}\bigg[\Big|\sum_{i=j^n+1}^{k^n}  h^n_i \big(X_{t^n_{i}\wedge t} - X_{t^n_{i-1}\wedge t} \big )\Big| \wedge 1\,\bigg| \, \cF_\tau \bigg](\omega)  =0,   \label{eq:stabsem1}\\
  \lim\limits_{n \to \infty}  & \ E^{\bar{P}}\bigg[\Big|h^n_{j^n}\,X_{t^n_{j^n}\wedge t}\Big| \wedge 1\,\bigg| \, \cF_\tau \bigg](\omega) = 0 \label{eq:stabsem2}.
  \end{align}
  Indeed, as $h^n_{j^n}\to0$ uniformly, we have $|h^n_{j^n}\,X_{t^n_{j^n}\wedge t}|\to0$ $\bar{P}$-a.s.\ and hence~\eqref{eq:stabsem2} follows by dominated convergence.

  To show~\eqref{eq:stabsem1}, we may choose a $\cF_{t^n_{i-1}+}$-measurable version of each $h^n_i$. Then, as $\bar{P}^{\tau,\omega} = \kappa(\omega) \in \fP_{sem}$
  for $\bar{P}$-a.e.\ $\omega \in \Omega$ (cf.\ \cite[Lemma~2.7]{NutzVanHandel.12}), the reverse implication of the Bichteler--Dellacherie theorem applied to $\kappa(\omega)$ yields that
  \begin{align*}
  & \ \lim_{n \to \infty} E^{\bar{P}}\bigg[\Big|\sum_{i=j^n+1}^{k^n}  h^n_i \big(X_{t^n_{i}\wedge t} - X_{t^n_{i-1}\wedge t} \big )\Big| \wedge 1\,\bigg| \, \cF_\tau \bigg](\omega)\\
  = & \ \lim_{n \to \infty} E^{\bar{P}^{\tau,\omega}}\bigg[\Big|\sum_{i=j^n(\omega)+1}^{k^n}  (h_i^n)^{\tau, \omega} \big(X^{\tau,\omega}_{t^n_{i}\wedge t} - X^{\tau,\omega}_{t^n_{i-1}\wedge t} \big )\Big| \wedge 1\bigg] \\
  = & \ \lim_{n \to \infty} E^{\kappa(\omega)}\bigg[\Big|\sum_{i=j^n(\omega)+1}^{k^n}  (h_i^n)^{\tau, \omega} \big(X_{(t^n_{i}\wedge t)-\tau(\omega)} - X_{(t^n_{i-1}\wedge t)-\tau(\omega)} \big )\Big| \wedge 1\bigg] =0
  \end{align*}
  for $\bar{P}$-a.e.\ $\omega \in \Omega$, because $(H^n)^{\tau, \omega}$ defines a sequence of elementary processes converging uniformly to zero. This completes the proof.
\end{proof}

As announced, we can now proceed to establish~(A3) for $\fP_\Theta$.

\begin{proposition}\label{prop:A3}
Let $\Theta \subseteq \R^d \times \S^d_{+} \times \mathcal{L}$ be measurable and $P \in \fP_\Theta$. Moreover, let $\tau$ be a finite $\F$-stopping time and let $\kappa: \Omega \to \fP(\Omega)$ be an $\cF_\tau$-measurable kernel with $\kappa(\omega) \in \fP_\Theta$ for $P$-a.e.\ $\omega \in \Omega$. Then, the measure $\bar{P}$ defined by
\begin{equation*}
\bar{P}(D):= \iint \1_D^{\tau,\omega}(\omega')\,\kappa(\omega, d\omega')\,P(d\omega), \ \ \ D \in \cF
\end{equation*}
 is an element of $\fP_\Theta$.
\end{proposition}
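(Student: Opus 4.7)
The plan is to combine Proposition~\ref{prop:stabsem}, which already gives $\bar{P}\in\fP_{sem}$, with Theorem~\ref{th:charactinv}, which describes the action of conditioning on the characteristics. The goal reduces to two statements: (a)~the $\bar{P}$-$\F$-characteristics of $X$ are absolutely continuous with respect to $dt$, so that $\bar P\in\fP^{ac}_{sem}$; and (b)~the resulting differential characteristics $(b^{\bar P},c^{\bar P},F^{\bar P})$ lie in $\Theta$, $\bar{P}\otimes dt$-a.e. I will establish both simultaneously by splitting the time line at $\tau$ and analyzing the pieces on $[\![0,\tau]\!]$ and on $]\!]\tau,\infty[\![$ separately.

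For the piece on $]\!]\tau,\infty[\![$, observe that by construction of $\bar{P}$ the regular conditional probability distribution at $\tau$ coincides with the kernel, i.e.\ $\bar{P}^{\tau,\omega}=\kappa(\omega)$ for $\bar P$-a.e.\ $\omega$ (this is the same identification used in the proof of Proposition~\ref{prop:stabsem}, via \cite[Lemma~2.7]{NutzVanHandel.12}). By hypothesis $\kappa(\omega)\in\fP_\Theta$, so its characteristics are absolutely continuous with differential characteristics in $\Theta$, $\kappa(\omega)\otimes dt$-a.e. Applying Theorem~\ref{th:charactinv} to $\bar{P}$ at the stopping time $\tau$, the shifted processes $B^{\bar P,\tau,\omega}_{\tau+\cdot}-B^{\bar P}_{\tau(\omega)}(\omega)$, $C^{\bar P,\tau,\omega}_{\tau+\cdot}-C^{\bar P}_{\tau(\omega)}(\omega)$ and $\nu^{\bar P,\tau,\omega}$ (shifted) form $\kappa(\omega)$-$\F$-characteristics, hence are $\kappa(\omega)$-a.s.\ absolutely continuous with differentials in $\Theta$. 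Integrating over $\omega$ shows that the restrictions of the $\bar{P}$-characteristics to $]\!]\tau,\infty[\![$ are absolutely continuous with values in $\Theta$, $\bar{P}\otimes dt$-a.e.

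For the piece on $[\![0,\tau]\!]$, use that $\bar{P}=P$ on $\cF_\tau$. Since the stopped trajectory $X^{\tau}$ is an $\cF_\tau$-measurable random element of the Skorohod space, its law under $\bar{P}$ equals its law under $P$. The standard fact that semimartingale characteristics are stable under stopping then gives that one may choose the same $\F$-predictable processes $(B^{P,\tau},C^{P,\tau},\nu^{P}\1_{[\![0,\tau]\!]})$ as stopped characteristics of $X$ under either measure; as $P\in\fP_\Theta$, these are absolutely continuous with differential characteristics in $\Theta$, $P\otimes dt$-a.e., and the identity $\bar{P}=P$ on $\cF_\tau$ transfers this property to $\bar{P}$.

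Concatenating the two versions produces an $\F$-predictable triplet that serves as $\bar P$-characteristics of $X$ on all of $\R_+$, is absolutely continuous $\bar{P}$-a.s., and has differential characteristics in $\Theta$ $\bar P\otimes dt$-a.e.; this proves $\bar{P}\in\fP_\Theta$. The main obstacle I anticipate is precisely this gluing: one must exhibit \emph{measurable} (in particular $\F$-predictable) versions of the three characteristics that simultaneously realize the stopped $P$-version on $[\![0,\tau]\!]$ and the pasted $\kappa(\omega)$-version on $]\!]\tau,\infty[\![$. This requires some of the same measurable-selection and predictability bookkeeping already employed for Lemma~\ref{le:predcandinv} and Proposition~\ref{prop:compjumpinv}, together with the fact (established in the companion paper \cite{NeufeldNutz.13a}) that one can select characteristics in a jointly measurable way. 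Once these versions are in hand, the conclusion $\bar P\in\fP_\Theta$ is immediate from the definition.
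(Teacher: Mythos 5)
Your core argument is the same as the paper's: Proposition~\ref{prop:stabsem} gives $\bar P\in\fP_{sem}$, Theorem~\ref{th:charactinv} identifies the conditional characteristics $B^{\tau,\omega}_{\tau+\cdot}-B_{\tau(\omega)}(\omega)$ etc.\ as $\bar P^{\tau,\omega}$-characteristics, the identification $\bar P^{\tau,\omega}=\kappa(\omega)\in\fP_\Theta$ supplies the desired properties after $\tau$, the identity $\bar P=P$ on $\cF_\tau$ handles $[\![0,\tau]\!]$, and a Fubini argument (your ``integrating over $\omega$'') transfers the conditional statements back to $\bar P$. The paper also proceeds in two stages --- first absolute continuity (i.e.\ $\bar P\in\fP^{ac}_{sem}$), then membership of the differential characteristics in $\Theta$ --- which matches your items (a) and (b).

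The one place where you go astray is the final ``concatenation'' step, which you flag as the main obstacle. No gluing of predictable versions is needed, and the paper's proof contains nothing of the sort. Once $\bar P\in\fP_{sem}$ is known, the triplet $(B,C,\nu)$ of $\bar P$-$\F$-characteristics of $X$ \emph{already exists} as a single predictable object, unique up to $\bar P$-evanescence; the task is not to build it from two halves but to verify that this given triplet has the two required properties. Concretely, for the first characteristic one fixes once and for all a Lebesgue decomposition $B_t=\int_0^t\varphi_s\,ds+\psi_t$ with $\psi$ singular, so that the exceptional set
\[
  D:=\Big\{ B_{\tau+\cdot}-B_\tau\neq \textstyle\int_\tau^{\tau+\cdot}\varphi_s\,ds\Big\}
\]
is globally defined and satisfies $\1_D^{\tau,\omega}=\1_{D^{\tau,\omega}}$; Theorem~\ref{th:charactinv} plus $\bar P^{\tau,\omega}=\kappa(\omega)\in\fP^{ac}_{sem}$ gives $\kappa(\omega)(D^{\tau,\omega})=0$ for $\bar P$-a.e.\ (hence, since $\kappa$ is $\cF_\tau$-measurable and $\bar P=P$ on $\cF_\tau$, for $P$-a.e.) $\omega$, and Fubini yields $\bar P(D)=0$. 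The same scheme, applied to the set $R$ of $(u,\omega)$ with $(b_u,c_u,F_{\omega,u})\notin\Theta$ on $[\![\tau,\infty[\![$, gives $(du\times\bar P)(R)=0$. So the ``measurable-selection and predictability bookkeeping'' you anticipate is not required; what is needed instead is the small bookkeeping point of passing from a $\bar P$-a.e.\ statement about $\kappa(\omega)$ to a $P$-a.e.\ one before integrating against $P$, which you did not mention but which is routine.
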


\begin{proof}
   As a first step, we consider the special case $\Theta= \R^d \times \S^d_{+} \times \mathcal{L}$; then $\fP_\Theta$ is the entire set $\fP^{ac}_{sem}$.
   In view of Proposition \ref{prop:stabsem}, we already know that $\bar{P} \in \fP_{sem}$. Thus, the characteristics $(B,C,F(dz)\,dA)$ of $X$ under $\bar{P}$ and $\F$ are well defined; we show that they are absolutely continuous $\bar{P}$-a.s. As $B$ has paths of finite variation $\bar{P}$-a.s., we can write for $\bar{P}$-a.e.\ $\omega \in \Omega$ a decomposition
   \begin{equation*}
   B_t(\omega)=\int_0^t \varphi_s(\omega)\,ds + \psi_t(\omega),
   \end{equation*}
   where $\varphi, \psi$ are measurable functions and $\psi$ is $\bar{P}$-a.s.\ singular with respect to the Lebesgue measure.
   Since $\bar{P}=P$ on $\cF_{\tau}$ and $P \in \fP^{ac}_{sem}$, we have $dB\ll du$ on $[\![0,\tau]\!]$ $\bar{P}$-a.s. Therefore, it suffices to show that $dB\ll du$ on $[\![\tau,\infty[\![$ $\bar{P}$-a.s., or equivalently, that
   \begin{equation*}
     D:=\bigg\{ B_{\tau+\cdot} - B_\tau \neq \int_\tau^{\tau+\cdot} \varphi_s \,ds\bigg\}
   \end{equation*}
   is a $\bar{P}$-nullset.
   Indeed, it follows from Theorem~\ref{th:charactinv} that for $\bar{P}$-a.e.\ $\omega \in \Omega$, the first characteristic of $X$ under $\bar{P}^{\tau,\omega}$ is given by
  \begin{equation*}
  B^{\tau,\omega}_{\tau(\omega)+\cdot}-B_{\tau(\omega)}(\omega) = \int_{\tau(\omega)}^{\tau(\omega)+\cdot} \varphi^{\tau,\omega}_{s}  \, ds + \psi^{\tau,\omega}_{\tau+\cdot} - \psi_{\tau(\omega)}(\omega),
  \end{equation*}
  and $\psi^{\tau,\omega}_{\tau+\cdot}  - \psi_{\tau(\omega)}(\omega)$ is singular with respect to the Lebesgue measure.
  Moreover, for $\bar{P}$-a.e.\ $\omega \in \Omega$, we have $\bar{P}^{\tau,\omega}=\kappa(\omega) \in \fP^{ac}_{sem}$ and thus
  \begin{equation}\label{eq:stababs1}
   \kappa(\omega)\bigg\{B^{\tau,\omega}_{\tau+\cdot}-B_{\tau(\omega)}(\omega)\neq \int_{\tau(\omega)}^{\tau(\omega)+\cdot} \varphi^{\tau,\omega}_s \,ds\bigg\}
  =0.
  \end{equation}
  Define the set
  \begin{equation*}
  D^{\tau,\omega}:=\bigg\{B^{\tau,\omega}_{\tau+\cdot}-B_{\tau(\omega)}(\omega)\neq \int_{\tau(\omega)}^{\tau(\omega)+\cdot} \varphi^{\tau,\omega}_s \,ds\bigg\},\quad \omega \in \Omega;
  \end{equation*}
  then~\eqref{eq:stababs1} states that
  \begin{equation*}
    \kappa(\omega)\big(D^{\tau,\omega}\big) =0 \ \ \mbox{for} \ \bar{P}\mbox{-a.e.} \ \omega \in \Omega.
  \end{equation*}
  As $\kappa$ is $\cF_\tau$-measurable and $\bar{P}=P$ on $\cF_\tau$, this equality holds also for
  $P$-a.e.\ $\omega \in \Omega$. Using Fubini's theorem and the fact that $\1_{D}^{\tau,\omega}=\1_{D^{\tau,\omega}}$, we conclude that
  \begin{align*}
  \bar{P}(D)&=\int_\Omega \int_\Omega \1_{D}^{\tau,\omega}(\omega')\, \kappa(\omega,d\omega')\,P(d\omega)
  = \int_\Omega \kappa(\omega)\big(D^{\tau,\omega}\big)\,P(d\omega) = 0
  \end{align*}
  as claimed. The proof of absolute continuity for the processes $C$ and $A$ is similar; we use the corresponding formulas from Theorem~\ref{th:charactinv}. This completes the proof for the special case $\Theta= \R^d \times \S^d_{+} \times \mathcal{L}$.

  Next, we consider the case of a general subset $\Theta\subseteq \R^d \times \S^d_{+} \times \mathcal{L}$. By the above, $\bar{P} \in \fP^{ac}_{sem}$; we write $(\int b_s \,ds, \int c_s \, ds, F_{s}\,ds)$ for the characteristics of $X$ under $\bar{P}$.
  Since $\bar{P}=P$ on $\cF_{\tau}$ and $P \in \fP_\Theta$, we have $(b,c,F) \in \Theta$ on $[\![0,\tau]\!]$, $du\times \bar{P}$-a.s., and it suffices to show that $(b,c,F) \in \Theta$ on $[\![\tau,\infty[\![$, $du\times \bar{P}$-a.s. That is, we need to show that
  \begin{equation*}
    R:=\Big\{(u,\omega) \in [\![\tau(\omega),\infty[\![ \, \Big| \, \big(b_u(\omega), c_u(\omega), F_{\omega,u}\big) \notin \Theta \Big\}
  \end{equation*}
  is a $du \times \bar{P}$-nullset. By Theorem~\ref{th:charactinv}, $\bar{P}^{\tau,\omega} \in \fP^{ac}_{sem}$ for $\bar{P}$-a.e.\ $\omega \in \Omega$  and the differential characteristics of $X$ under $\bar{P}^{\tau,\omega}$ are
  \begin{equation*}%
   \big(b^{\tau,\omega}_{\tau+\cdot},\, c^{\tau,\omega}_{\tau+\cdot},\, F^{\tau,\omega}_{\tau+\cdot}\big).
  \end{equation*}
  Similarly as in~\eqref{eq:stababs1}, this formula and the fact that $\bar{P}^{\tau,\omega}=\kappa(\omega)\in \fP_ {\Theta}$ for $\bar{P}$-a.e.\ $\omega \in \Omega$ imply that
  \begin{equation*}%
   \big(du\times\kappa(\omega)\big)\Big\{(u,\omega') \in [\![0,\infty[\![ \, \Big| \, \big(b^{\tau,\omega}_{\tau+u}(\omega'),  c^{\tau,\omega}_{\tau+u}(\omega'),F^{\tau,\omega}_{\omega',\tau+u}\big)  \notin \Theta \Big\}
  =0
  \end{equation*}
  for $\bar{P}$-a.e.\ $\omega \in \Omega$.
  If we define
  \[
    R^{\tau,\omega}:= \Big\{(u,\omega') \in [\![\tau(\omega),\infty[\![ \, \Big| \, \big(b^{\tau,\omega}_{u}(\omega'),  c^{\tau,\omega}_{u}(\omega'),F^{\tau,\omega}_{\omega',u}\big)  \notin \Theta \Big\},
  \]
  then this implies that
  \begin{equation*}
    \big(du\times\kappa(\omega)\big)\big(R^{\tau,\omega}\big) =0 \ \ \mbox{for} \ \bar{P}\mbox{-a.e.} \ \omega \in \Omega.
  \end{equation*}
  Again, this holds also for $P$-a.e.\ $\omega \in \Omega$ and as $\1_{R}^{\tau,\omega}=\1_{R^{\tau,\omega}}$, Fubini's theorem yields that
  \begin{align*}
    (du\times \bar{P})(R)&=\int_\Omega \int_\Omega \int_0^\infty \1_{R}^{\tau,\omega}(u,\omega')\,du\, \kappa(\omega,d\omega')\,P(d\omega)\\
    &= \int_\Omega \big(du\times\kappa(\omega)\big)(R^{\tau,\omega})\,P(d\omega) = 0.
  \end{align*}
  This completes the proof.
\end{proof}

\section{Connection to PIDE}\label{se:PIDE}

In this section, we relate the nonlinear L\'evy process to a PIDE. Throughout, we fix a measurable set $\Theta \subseteq \R^d \times \S_{+}^d \times \mathcal{L}$ satisfying the conditions~\eqref{eq:intcond} and~\eqref{eq:limitcond} which, for convenience, we state again as
\begin{align}
\mathcal{K}:=\sup_{(b,c,F) \in \Theta} \Big\{\int_{\R^d} |z|\wedge |z|^2 \, F(dz) + |b| + |c| \Big\}<\infty, \label{eq:intcondRestated}
\\
\lim_{\varepsilon\to 0} \cK_\eps=0\quad\mbox{for}\quad \cK_\eps :=\sup_{F \in \Theta_3} \int_{|z|\leq \varepsilon} |z|^2 \, F(dz), \label{eq:limitcondRestated}
\end{align}
where $\Theta_3=\proj_3 \Theta$ is the canonical projection of $\Theta$ onto $\cL$.
Our aim is to show that for given boundary condition $\psi \in C_{b,Lip}(\R^d)$, the value function
\[%
  v(t,x):=\cE(\psi(x+X_{t})\big) \equiv \sup_{P \in \fP_\Theta} E^P\big[\psi(x+X_{t})\big], \quad (t,x) \in [0,\infty)\times \R^d
\]%
is the unique viscosity solution of the PIDE~\eqref{eq:PIDE}.

The existence part relies on the following dynamic programming principle for $v$; it is essentially a special case of the semigroup property stated in Theorem~\ref{th:mainLevy}(ii).

\begin{lemma}\label{le:DPP}
  For all $0\leq u\leq t <\infty$ and $x\in\R^d$, we have
  \begin{equation*}
   v(t,x)= \cE\big( v(t-u,x+X_{u})\big).
  \end{equation*}
\end{lemma}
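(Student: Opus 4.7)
The plan is to deduce this as a direct consequence of the tower property in Theorem~\ref{th:mainLevy}(ii), applied to the bounded Borel function $\xi(\omega) := \psi(x+\omega_t)$ with the constant stopping times $\sigma=0$ and $\tau=u$. Since $\psi \in C_{b,Lip}(\R^d)$ and $\omega \mapsto \omega_t$ is Borel on $\Omega$, the function $\xi$ is bounded and Borel, hence upper semianalytic as required.

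The key computation is to identify $\mathcal{E}_u(\xi)$ with $v(t-u,x+X_u)$. For any $\omega\in\Omega$, since $t\geq u$, the concatenation satisfies $(\omega\otimes_u \tilde\omega)_t = \omega_u + \tilde\omega_{t-u}$, so
\[
  \xi^{u,\omega}(\tilde\omega) = \psi\bigl(x + (\omega\otimes_u\tilde\omega)_t\bigr) = \psi\bigl((x+\omega_u) + \tilde\omega_{t-u}\bigr) = \psi\bigl((x+X_u(\omega)) + X_{t-u}(\tilde\omega)\bigr).
\]
Taking $\cE$ in $\tilde\omega$ and using the definition of $v$ yields $\mathcal{E}_u(\xi)(\omega) = \cE\bigl(\psi((x+X_u(\omega))+X_{t-u})\bigr) = v(t-u,\, x+X_u(\omega))$, a formula that holds pointwise in $\omega$. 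Observe that the right-hand side is exactly the random variable whose nonlinear expectation appears in the claimed identity, and by Theorem~\ref{th:mainLevy}(ii) this function is upper semianalytic, so $\cE$ of it is well defined.

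Next I would observe that because $\omega_0=0$ in $\Omega$, we have $\omega\otimes_0 \tilde\omega = \tilde\omega$ for every $\omega,\tilde\omega$, and therefore $\eta^{0,\omega} = \eta$ identically for any function $\eta$ on $\Omega$. Consequently $\mathcal{E}_0(\eta)(\omega) = \cE(\eta)$ for every $\omega$, and in particular $\mathcal{E}_0(\xi) = \cE(\xi) = v(t,x)$ and $\mathcal{E}_0\bigl(\mathcal{E}_u(\xi)\bigr) = \cE\bigl(v(t-u,\,x+X_u)\bigr)$. Applying~\eqref{eq:DPP} with $\sigma=0$ and $\tau=u$ therefore gives
\[
  v(t,x) \;=\; \mathcal{E}_0(\xi) \;=\; \mathcal{E}_0\bigl(\mathcal{E}_u(\xi)\bigr) \;=\; \cE\bigl(v(t-u,\, x+X_u)\bigr),
\]
which is the desired identity.

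There is essentially no obstacle beyond careful bookkeeping: the substantive work has already been done in Theorem~\ref{th:mainLevy}, whose proof establishes both the measurability of the conditional expectation and the semigroup property on all of $\Omega$. The only points to be careful about are that $\xi$ is bounded Borel (so that $\mathcal{E}_u(\xi)$ is upper semianalytic by the theorem, which is needed for a second application of $\cE$), and that the concatenation formula $(\omega\otimes_u\tilde\omega)_t = \omega_u+\tilde\omega_{t-u}$ is used only for $t\geq u$.
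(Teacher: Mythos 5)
Your proof is correct and follows essentially the same route as the paper: identify $\mathcal{E}_u(\psi(x+X_t))(\omega)$ with $v(t-u,x+X_u(\omega))$ via the concatenation formula and then apply the semigroup property of Theorem~\ref{th:mainLevy}(ii) with $\sigma=0$, $\tau=u$. The additional bookkeeping you supply (upper semianalyticity of $\xi$ and the identification $\mathcal{E}_0=\cE$) is accurate and merely makes explicit what the paper leaves implicit.
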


\begin{proof}
  Let $0\leq u \leq t<\infty$. As $X$ is the canonical process, we have that
  \[
    \cE_u\big(\psi(x+X_t)\big)(\omega)=\cE\big(\psi(x+X_u(\omega)+X_{t-u})\big)=v(t-u,x+X_{u}(\omega)),\quad \omega\in\Omega.
  \]
  Applying $\cE(\cdot)$ on both sides, Theorem~\ref{th:mainLevy}(ii) yields that
  \[
    v(t,x) = \cE\big(\cE_u\big(\psi(x+X_t)\big)\big) = \cE\big(v(t-u,x+X_{u})\big)
  \]
  as claimed.
\end{proof}

During most of this section, we will be concerned with a fixed law $P \in \fP^{ac}_{sem}$ and we may use the usual augmentation $\F^P_{+}$ to avoid any subtleties related to stochastic analysis. This is possible because, as mentioned in Section~\ref{se:mainResults}, the characteristics associated with $\F$ and $\F^P_{+}$ coincide $P$-a.s.
To fix some notation, recall that under $P \in \fP^{ac}_{sem}$, the process $X$ has the canonical representation
\begin{align}\label{eq:canrep}
 X_t
  & = \int_0^t b^P_s \, ds + X^{c,P}_t + X^{d,P}_t + \int_0^t \int_{\R^d} \big[z-h(z) \big] \, \mu^X(ds,dz),
\end{align}
where $X^{c,P}$ is the continuous local martingale part of $X$ with respect to $P$-$\F^P_{+}$, $F^P_{s}(dz) \, ds$ is  the compensator of $\mu^X(ds,dz)$ and
\[
  X^{d,P}_t:= \int_0^t \int_{\R^d} h(z) \, \big(\mu^X(ds,dz) - F^P_{s}(dz)\,ds\big)
\]
is a purely discontinuous $P$-$\F^P_{+}$-local martingale;  cf.\ \cite[Theorem~2.34, p.\,84]{JacodShiryaev.03}. In the subsequent proofs, $C$ is a constant whose value may change from line to line.

The following simple estimate will be used repeatedly.

\begin{lemma}\label{le:special}
  There exists a constant $C_\cK$ such that
  \begin{equation}\label{eq:Wachstum}
   E^P\bigg[\sup_{0\leq u \leq t} |X_u| \bigg]\leq C_{\cK}\, (t+ t^{1/2}),\quad t\geq0 \quad \mbox{for all} \quad P \in \fP_\Theta.
  \end{equation}
\end{lemma}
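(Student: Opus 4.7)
The plan is to apply the canonical decomposition \eqref{eq:canrep} under $P\in\fP_\Theta$ and estimate each of the four pieces separately, using only the global bound $\cK$ from \eqref{eq:intcondRestated}; condition \eqref{eq:limitcondRestated} is not needed here.

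The drift and continuous martingale parts are immediate. Since $|b^P_s|\leq\cK$ $ds\otimes P$-a.e., one has $\sup_{u\leq t}\big|\int_0^u b^P_s\,ds\big|\leq\cK t$ pathwise. For the continuous local martingale $X^{c,P}$, whose predictable quadratic variation is $\int_0^\cdot c^P_s\,ds$ with $|c^P_s|\leq\cK$, I would apply Doob's $L^2$ inequality to each coordinate to obtain $E^P\big[\sup_{u\leq t}|X^{c,P}_u|\big]\leq C(d)(\cK t)^{1/2}$.

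For the purely discontinuous local martingale $X^{d,P}_t=\int_0^t\!\int h(z)\,\big(\mu^X(ds,dz)-F^P_s(dz)\,ds\big)$, the plan is to combine the $L^2$-isometry for compensated jump integrals with Doob's inequality:
\[
E^P\Big[\sup_{u\leq t}|X^{d,P}_u|^2\Big]\leq 4\,E^P\Big[\int_0^t\!\int_{\R^d}|h(z)|^2\,F^P_s(dz)\,ds\Big].
\]
The key ingredient is the elementary bound $|h(z)|^2\leq C_h\,(|z|\wedge|z|^2)$ for a constant $C_h$ depending only on $h$; this follows by splitting into $\{|z|\leq r\}$, where $h(z)=z$ and $|h(z)|^2=|z|^2=|z|\wedge|z|^2$, and $\{|z|>r\}$, where $|h|\leq\|h\|_\infty$ while $|z|\wedge|z|^2\geq r\wedge r^2>0$. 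Combined with \eqref{eq:intcondRestated}, this gives $E^P\big[\sup_{u\leq t}|X^{d,P}_u|\big]\leq C(\cK t)^{1/2}$.

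For the large-jump term $\int_0^t\!\int[z-h(z)]\,\mu^X(ds,dz)$, I would note that $z\mapsto z-h(z)$ vanishes near the origin and satisfies $|z-h(z)|\leq C_h'\,(|z|\wedge|z|^2)$ by the same splitting. As the integrand is nonnegative, compensation yields
\[
E^P\Big[\int_0^t\!\int|z-h(z)|\,\mu^X(ds,dz)\Big]=E^P\Big[\int_0^t\!\int|z-h(z)|\,F^P_s(dz)\,ds\Big]\leq C_h'\,\cK t.
\]
Summing the four estimates via Minkowski's inequality produces the desired bound with $C_\cK$ depending only on $\cK$, $d$, and the truncation function $h$. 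The only even mildly delicate step is the pair of pointwise inequalities on $|h(z)|^2$ and $|z-h(z)|$ just mentioned; everything else is a direct application of classical martingale estimates to the characteristics controlled by \eqref{eq:intcondRestated}.
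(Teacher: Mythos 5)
Your proposal is correct and follows essentially the same route as the paper: decompose $X$ via the canonical representation \eqref{eq:canrep} and bound each term using only \eqref{eq:intcondRestated}, with the drift and the compensated large-jump part contributing $O(t)$ and the two local-martingale parts contributing $O(t^{1/2})$. The only (immaterial) difference is that you control the martingale maxima by the $L^2$-isometry plus Doob's inequality, whereas the paper uses the BDG inequality with exponent $1/2$ together with Jensen; both reduce to the same pointwise bounds $|h(z)|^2\leq C_h(|z|\wedge|z|^2)$ and $|z-h(z)|\leq C_h'(|z|\wedge|z|^2)$ that you verify.
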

\begin{proof}
  Let $P \in \fP_{\Theta}$; then Jensen's inequality and~\eqref{eq:intcondRestated} imply that
  \begin{align*}
  E^P\Big[ \big|[X^{d,P}]_t \big|^{1/2}\Big]
  & \leq E^P\bigg[\int_0^t \int_{\R^d} |h(z)|^2  \, \mu^X(ds,dz) \bigg]^{1/2} \\
  & \leq C\, E^P\bigg[\int_0^t \int_{\R^d} |z|^2\wedge 1  \, F_{s}(dz)\,ds \bigg]^{1/2} \\
  & \leq  C\, \mathcal{K}^{1/2} t^{1/2}
  \end{align*}
  and so the Burkholder--Davis--Gundy (BDG) inequalities yield that
  \begin{equation}\label{eq:estimateJumpMartBDG}
  E^P\bigg[\sup_{0\leq u \leq t} \big| X^{d,P}_u \big| \bigg] \leq C\, E^P\Big[\big|[X^{d,P}]_t \big|^{1/2}\Big] \leq C_\cK \,t^{1/2}.
  \end{equation}
  Similarly, \eqref{eq:intcondRestated} also implies that
  \begin{equation}\label{eq:estimateContMartBDG}
    E^P\bigg[\sup_{0\leq u \leq t} \big| X^{c,P}_u \big| \bigg]\leq C_\cK\, t^{1/2},\quad E^P\bigg[\sup_{0\leq u \leq t} \bigg| \int_0^u b_s \,ds \bigg| \bigg] \leq C_\cK\, t
  \end{equation}
  and
  \[
    E^P\bigg[\sup_{0\leq u \leq t}  \Big| \int_0^u \int_{\R^d} \big[z-h(z)\big] \, \mu^X(ds,dz) \Big| \bigg] \leq  C_\cK\, t.
  \]
  The result now follows from the decomposition~\eqref{eq:canrep}.
\end{proof}

We deduce the following regularity properties of $v$.

\begin{lemma}\label{le:continuous}
  The value function $v$ is uniformly bounded by $\|\psi\|_\infty$ and jointly continuous. More precisely, $v(t,\cdot)$ is Lipschitz continuous with constant $\Lip(\psi)$ and $v(\cdot,x)$ is locally $1/2$-H\"older continuous with a constant depending only on $\Lip(\psi)$ and $\cK$.
\end{lemma}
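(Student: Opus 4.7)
The plan is to establish the three regularity properties in turn, each as a direct consequence of either the sublinearity of $\cE(\cdot)$, the dynamic programming principle from Lemma~\ref{le:DPP}, or the moment estimate from Lemma~\ref{le:special}.

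First, boundedness is immediate: for every $P \in \fP_\Theta$ we have $|E^P[\psi(x+X_t)]|\leq\|\psi\|_\infty$, hence $|v(t,x)|\leq \|\psi\|_\infty$. For the spatial Lipschitz property, fix $t\geq 0$ and $x,y\in\R^d$. Using the identity $\cE(Y)-\cE(Z)\leq \cE(Y-Z)$ (which follows from the sublinearity $\cE(Y)\leq\cE(Z)+\cE(Y-Z)$), and symmetrically the reverse, we obtain
\begin{equation*}
|v(t,x)-v(t,y)|\leq \cE\bigl(|\psi(x+X_t)-\psi(y+X_t)|\bigr)\leq \Lip(\psi)\,|x-y|,
\end{equation*}
so $v(t,\cdot)$ is Lipschitz with constant $\Lip(\psi)$, uniformly in $t$.

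For the temporal Hölder property, fix $x\in\R^d$ and $0\leq s\leq t$. By Lemma~\ref{le:DPP} applied with $u=t-s$, we have $v(t,x)=\cE\bigl(v(s,x+X_{t-s})\bigr)$, while trivially $v(s,x)=\cE(v(s,x))$ since $v(s,x)$ is a constant. The same sublinearity argument as above combined with the spatial Lipschitz estimate just established yields
\begin{equation*}
|v(t,x)-v(s,x)|\leq \cE\bigl(|v(s,x+X_{t-s})-v(s,x)|\bigr)\leq \Lip(\psi)\,\cE(|X_{t-s}|).
\end{equation*}
Since each $P\in\fP_\Theta$ satisfies the moment estimate of Lemma~\ref{le:special}, taking the supremum over $P$ gives $\cE(|X_{t-s}|)\leq C_\cK\bigl((t-s)+(t-s)^{1/2}\bigr)$. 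On any bounded time interval $[0,T]$, this is dominated by a constant multiple of $(t-s)^{1/2}$, with the constant depending only on $\cK$ and $T$, establishing the claimed local $1/2$-H\"older continuity in $t$.

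Joint continuity then follows from the two one-variable estimates via the triangle inequality: $|v(t,x)-v(s,y)|\leq |v(t,x)-v(t,y)|+|v(t,y)-v(s,y)|\leq \Lip(\psi)|x-y|+C\,|t-s|^{1/2}$ on bounded sets in $t$. There is no real obstacle here; the only thing that requires care is keeping track of the use of sublinearity of $\cE$ (rather than linearity) to pass absolute values through $\cE$, and noting that Lemma~\ref{le:special} provides a uniform moment bound over $\fP_\Theta$ so that the time-continuity constant genuinely depends only on $\Lip(\psi)$ and $\cK$.
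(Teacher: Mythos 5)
Your proof is correct and follows essentially the same route as the paper: boundedness and the spatial Lipschitz bound directly from the definition of $v$ and sublinearity, and the temporal $1/2$-H\"older bound by combining the dynamic programming identity of Lemma~\ref{le:DPP} with the spatial Lipschitz estimate and the moment bound~\eqref{eq:Wachstum} from Lemma~\ref{le:special}. Your explicit tracking of the sublinearity inequality $|\cE(Y)-\cE(Z)|\leq\cE(|Y-Z|)$ is a welcome bit of extra care, but there is no substantive difference from the paper's argument.
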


\begin{proof}
  The boundedness and the Lipschitz property follow directly from the definition of $v$. Let $0\leq u\leq t$, then Lemma~\ref{le:DPP}, the Lipschitz continuity of $v(t,\cdot)$ and the estimate~\eqref{eq:Wachstum} show that
  \begin{align*}
    \big|v(t,x)-v(t-u,x)\big|&=\big| \cE\big(v(t-u,x+X_u)-v(t-u,x)\big)\big|\\
    &\leq C \, \cE\big(|X_u|\big) \\
    &\leq C  \, (u + u^{1/2} ).
  \end{align*}
  The H\"older continuity from the right is obtained analogously.
\end{proof}

\subsection{Existence}\label{se:exist}

Consider the PIDE introduced in \eqref{eq:PIDE}; namely,
\begin{equation*}\label{eq:PIDE2}
\partial_t v(t,x)-G\big(D_x v(t,x), D^2_{xx} v(t,x), v(t,x+\cdot)\big)=0, \quad v(0,x)=\psi(x) %
\end{equation*}
for $(t,x) \in (0,\infty) \times \R^d$, where the nonlinearity $G(p,q,f(\cdot))$ is given by
\[
\sup_{(b,c,F) \in \Theta}\bigg\{
p b + \frac{1}{2} \tr[q c] + \int_{\R^d} \big[f(z)-f(0)- D_x f(0) h(z)\big] F(dz)
     \bigg\}. \label{eq:PIDE-G2}
\]
We recall that $\psi \in C_{b,Lip}(\R^d)$ and $v(t,x)=\cE(\psi(x+X_t))$.

\begin{proposition}\label{prop:PIDEex}
  The value function $v$ of~\eqref{eq:val} is a viscosity solution of the PIDE~\eqref{eq:PIDE}.
\end{proposition}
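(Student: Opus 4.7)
The plan is to verify the viscosity subsolution and supersolution inequalities separately, combining the dynamic programming principle of Lemma~\ref{le:DPP} with It\^o's formula applied to a smooth test function along each $P\in\fP_\Theta$. The initial condition $v(0,\cdot)=\psi$ is immediate from the definition of $v$, while the joint continuity required is supplied by Lemma~\ref{le:continuous}. The common preparation for both steps is as follows: fix $\varphi\in C_b^{2,3}((0,\infty)\times\R^d)$, $(t_0,x_0)\in(0,\infty)\times\R^d$ and $u\in(0,t_0]$. For each $P\in\fP_\Theta$ with differential characteristics $(b^P,c^P,F^P)$, I would apply It\^o's formula to $s\mapsto \varphi(t_0-s,x_0+X_s)$ along the canonical decomposition~\eqref{eq:canrep}; the boundedness of the derivatives of $\varphi$ together with~\eqref{eq:intcondRestated} kills the stochastic-integral terms in expectation, yielding
\begin{align*}
& E^P\big[\varphi(t_0-u,x_0+X_u)-\varphi(t_0,x_0)\big] \\
& \quad = E^P\!\int_0^u\! \big\{-\partial_t\varphi(t_0-s,x_0+X_{s-}) + \mathcal{L}^{b^P_s,c^P_s,F^P_s}[\varphi(t_0-s,x_0+\cdot)](X_{s-})\big\}\,ds,
\end{align*}
where $\mathcal{L}^{b,c,F}[f](y):=bDf(y)+\tfrac12\tr[cD^2f(y)]+\int[f(y+z)-f(y)-Df(y)h(z)]\,F(dz)$ denotes the classical L\'evy generator.

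For the subsolution property I would take $\varphi\geq v$ with equality at $(t_0,x_0)$. Lemma~\ref{le:DPP} and monotonicity of $\cE$ give $\varphi(t_0,x_0)\leq \cE(\varphi(t_0-u,x_0+X_u))$, hence $\sup_{P\in\fP_\Theta}E^P[\varphi(t_0-u,x_0+X_u)-\varphi(t_0,x_0)]\geq 0$. Since $\mathcal{L}^{b,c,F}\leq G$ pointwise on $\Theta$ by the very definition of $G$ in~\eqref{eq:PIDE-G}, the above identity implies
\[
\sup_{P\in\fP_\Theta} E^P\!\int_0^u\!\Psi(t_0-s,x_0+X_{s-})\,ds \geq 0,
\]
where $\Psi(t,x):=-\partial_t\varphi(t,x)+G(D_x\varphi(t,x),D^2_{xx}\varphi(t,x),\varphi(t,x+\cdot))$. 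Dividing by $u$ and sending $u\to 0$, Lemma~\ref{le:special} gives $\sup_P E^P[\sup_{s\leq u}|X_s|]\to 0$ and, granted continuity of $\Psi$ at $(t_0,x_0)$, one extracts $\Psi(t_0,x_0)\geq 0$, which is the required subsolution inequality.

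For the supersolution property I would take $\varphi\leq v$ with equality at $(t_0,x_0)$, fix an arbitrary triplet $(b_0,c_0,F_0)\in\Theta$ and let $P_0\in\fP_\Theta$ be the law of the classical L\'evy process with these constant characteristics. Lemma~\ref{le:DPP} and monotonicity of $\cE$ give $E^{P_0}[\varphi(t_0-u,x_0+X_u)-\varphi(t_0,x_0)]\leq 0$, and the key identity above---now with a deterministic generator---yields, on dividing by $u$ and sending $u\to 0$, the inequality $-\partial_t\varphi(t_0,x_0)+\mathcal{L}^{b_0,c_0,F_0}[\varphi(t_0,x_0+\cdot)](0)\leq 0$. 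Taking the supremum over $(b_0,c_0,F_0)\in\Theta$ then delivers the supersolution inequality.

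The main technical obstacle is the continuity of $\Psi$ at $(t_0,x_0)$ invoked in the subsolution step, since $G$ is typically discontinuous in its nonlocal argument (the supremum in~\eqref{eq:PIDE-G} is over a possibly noncompact family of L\'evy measures). For smooth $\varphi$ the required continuity has to be proved by hand, uniformly in $(b,c,F)\in\Theta$: splitting the jump integral at a threshold $\varepsilon>0$, the contribution of $|z|\leq\varepsilon$ is controlled by~\eqref{eq:limitcondRestated} together with a third-order Taylor expansion of $\varphi$, while the contribution of $|z|>\varepsilon$ is estimated using the uniform large-jump bound from~\eqref{eq:intcondRestated} and the Lipschitz continuity of the first two spatial derivatives of $\varphi$.
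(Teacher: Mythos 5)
Your proposal is correct and follows essentially the same route as the paper: dynamic programming (Lemma~\ref{le:DPP}), It\^o's formula along the canonical representation~\eqref{eq:canrep}, and an $\eps$-splitting of the jump integral controlled by~\eqref{eq:intcondRestated} and~\eqref{eq:limitcondRestated}, with the double limit $u\to0$ then $\eps\to0$. The only organizational difference is that you first replace the running generator by $G$ at the running point and then invoke a modulus of continuity for $(t,x)\mapsto G(D_x\varphi,D^2_{xx}\varphi,\varphi(t,x+\cdot))$ that is uniform over $\Theta$ --- which you correctly identify as the crux and which is established by exactly the term-by-term freezing estimates the paper carries out inside the expectation --- while your supersolution argument via a fixed constant-characteristic L\'evy law is the standard one the paper leaves to the reader.
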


\begin{proof}
  The basic line of argument is standard in stochastic control. We detail the proof because the presence of small jumps necessitates additional arguments; this is where the condition~\eqref{eq:limitcondRestated} comes into play.

  By Lemma \ref{le:continuous}, $v$ is continuous on $[0,\infty)\times \R^d$, and we have $v(0,\cdot)= \psi$ by the definition of $v$. We show that $v$ is a viscosity subsolution of~\eqref{eq:PIDE}; the supersolution property is proved similarly.

  Let $(t,x) \in (0,\infty)\times \R^d$ and let $\varphi \in C^{2,3}_b((0,\infty)\times \R^d)$ be such that $\varphi\geq v$ and $\varphi(t,x)=v(t,x)$.
  For $0<u<t$, Lemma~\ref{le:DPP} shows that
  \begin{equation}\label{eq:subsol1}
    0= \sup_{P \in \fP_\Theta} E^P\big[v(t-u,x+X_u)-v(t,x)\big]  \leq \sup_{P \in \fP_\Theta} E^P\big[\varphi(t-u,x+X_u)-\varphi(t,x)\big].
  \end{equation}
  We fix $P \in \fP_\Theta$ and recall that $(b^P,c^P,F^P)$ are the differential characteristics of $X$ under $P$. Applying It\^o's formula, we obtain that $P$-a.s.,
  \begin{align}
    & \ \varphi(t-u,x+X_u)-\varphi(t,x) \nonumber\\
    =&  \  \int_0^u \int_{\R^d} D_x \varphi(t-s,x+X_{s-}) \, d(X^{c,P}_s+X^{d,P}_s) \nonumber\\
    &   +\int_0^u -\partial_t \varphi(t-s,x+X_{s-})\, ds + \int_0^u D_x \varphi(t-s,x+X_{s-}) b^P_s\,ds \nonumber\\
    &  + \frac{1}{2}\int_0^u \tr\big[D^2_{xx} \varphi(t-s,x+X_{s-}) \, c^P_s \big]\,ds  \nonumber\\
    &  + \int_0^u \int_{\R^d} \Big[\varphi(t-s,x+X_{s-}+z)- \varphi(t-s,x+X_{s-}) \nonumber\\
    & \phantom{  + \int_0^u \int_{\R^d} \Big[\varphi(t-s,x+X_{s-}+z)}- D_x \varphi(t-s,x+X_{s-}) h(z)\Big]\mu^X(ds,dz). \label{eq:subsol2}
  \end{align}
  Since $\varphi \in C^{2,3}_b$, it follows from~\eqref{eq:estimateJumpMartBDG} and~\eqref{eq:estimateContMartBDG} that the first integral in~\eqref{eq:subsol2} is a true martingale; in particular,
  \begin{equation}\label{eq:subsol3.0}
    E^P\bigg[\int_0^u \int_{\R^d} D_x \varphi(t-s,x+X_{s-}) \, d(X^{c,P}_s+X^{d,P}_s)\bigg]=0,\quad u\geq0.
  \end{equation}
  Using~\eqref{eq:intcondRestated} and~\eqref{eq:Wachstum}, we can estimate the expectations of the other terms in~\eqref{eq:subsol2}. Namely, we have
  \begin{align}
    & \ E^P\bigg[\int_0^u D_x \varphi(t-s,x+X_{s-}) b^P_s\, ds\bigg] \nonumber\\
   \leq & \ \int_0^u E^P\Big[ \big|D_x \varphi(t-s,x+X_{s-})-D_x \varphi(t,x)\big|\, |b^P_s| + D_x \varphi(t,x) b^P_s \Big]\, ds \nonumber\\
   \leq & \ \int_0^u E^P\Big[C\,\big(s + |X_{s-}|\big) \Big] + E^P\Big[ D_x \varphi(t,x) b^P_s\Big]\, ds \nonumber \\
   \leq & \  C \, (u^2 + u^{3/2})+ \int_0^u E^P\Big[ D_x \varphi(t,x) b^P_s\Big]\, ds,  \label{eq:Lipb}
   \end{align}
   and similarly
   \begin{equation}\label{eq:Lipt}
      E^P\bigg[ \int_0^u -\partial_t \varphi(t-s,x+X_{s-})\, ds\bigg] \leq \int_0^u -\partial_{t} \varphi(t,x)\, ds + C \, (u^2 + u^{3/2})
   \end{equation}
   as well as
   \begin{align}
   & \ E^P\bigg[\int_0^u \tr\big[D^2_{xx} \varphi(t-s,x+X_{s-}) c^P_s\big]\, ds\bigg] \nonumber\\
   \leq & \
   \int_0^u E^P\Big[\tr\big[ D^2_{xx} \varphi(t,x)\,c^P_s\big]\Big]\, ds + C \, (u^2 + u^{3/2}). \label{eq:Lipc}
   \end{align}
   For the last term in~\eqref{eq:subsol2}, we shall distinguish between jumps smaller and larger than a given $\eps>0$, where $\eps$ is such that $h(z)=z$ on $\{|z| \leq \varepsilon\}$. Indeed, a Taylor expansion shows that there exist $\xi_{z} \in \R^d$ such that $P$-a.s., the integral can be written as the sum
  \begin{align}
      & \int_0^u \int_{|z|>\varepsilon} \Big[\varphi(t-s,x+X_{s-}+z)- \varphi(t-s,x+X_{s-})  \nonumber\\
      & \phantom{\int_0^u \int_{\R^d} \Big[\varphi(t-s,x+X_{s-}+z)}- D_x \varphi(t-s,x+X_{s-}) h(z)\Big]\mu^X(ds,dz)  \nonumber\\
      & + \int_0^u \int_{|z|\leq \varepsilon} \frac{1}{2}\,\tr \big[D^2_{xx}\varphi(t-s,x+X_{s-}+ \xi_z) \,z z^\top \big]  \mu^X(ds,dz). \label{eq:subsol3.1}
\end{align}
By~\eqref{eq:intcondRestated}, both of these expressions are $P$-integrable. Using the same arguments as in \eqref{eq:Lipb}, the first integral satisfies
\begin{align}
& \ E^P\bigg[\int_0^u \int_{|z|>\varepsilon} \big[\varphi(t-s,x+X_{s-}+z)-\varphi(t-s,x+X_{s-})\nonumber\\
& \phantom{ \ E^P\bigg[\int_0^u \int_{|z|>\varepsilon} \big[\varphi(t-s,x+X_{s-}}-D_x\varphi(t-s,x+X_{s-}) h(z)\big]\,F_{s}(dz)\,ds\bigg] \nonumber\\
 \leq & \ E^P\bigg[\int_0^u\int_{|z|>\varepsilon} \big[\varphi(t,x+z)-\varphi(t,x)-D_x\varphi(t,x) h(z)\big]
 \,F^P_{s}(dz) \,ds \bigg] \nonumber\\
 & \ + C\, C_\eps \,(u^2 + u^{3/2}), \label{eq:Lipv}
\end{align}
where
\[
  C_\eps := \sup_{F \in \Theta_3} \int_{|z|>\varepsilon} 1 \, F(dz)
\]
is finite for every fixed $\eps>0$ due to~\eqref{eq:intcondRestated}. For the second integral in~\eqref{eq:subsol3.1}, we have
\begin{align}
& \ E^P\bigg[\int_0^u \int_{|z|\leq \varepsilon} \frac{1}{2}\,\tr \big[D^2_{xx}\varphi(t-s,x+X_{s-}+ \xi_z) \,z z^\top \big]  \mu^X(ds,dz)\bigg] \nonumber \\
= & \ E^P\bigg[\int_0^u \int_{|z|\leq \varepsilon} \frac{1}{2}\,\tr \big[D^2_{xx}\varphi(t-s,x+X_{s-}+ \xi_z) \,z z^\top \big]  F^P_{s}(dz) \, ds \bigg] \nonumber\\
\leq & \ C\, \cK_\eps\,u; \label{eq:subsol3.2}
\end{align}
recall~\eqref{eq:limitcondRestated}. Thus, taking expectations in \eqref{eq:subsol2} and using \eqref{eq:subsol3.0}--\eqref{eq:subsol3.2}, we obtain for small $\eps>0$ that
 \begin{align}
 & \ E^P\Big[\varphi(t-u,x+X_u)-\varphi(t,x)\Big] \nonumber\\
 \leq & \ \int_0^u E^P\bigg[-\partial_t \varphi(t,x) + D_x\varphi(t,x)  b^P_s + \frac{1}{2} \tr\big[D^2_{xx} \varphi(t,x) \, c^P_s\big] \nonumber \\
 & \ \phantom{\int_0^u E^P\bigg[} +  \int_{|z|>\varepsilon} \big[\varphi(t,x+z)-\varphi(t,x)- D_x \varphi(t,x) h(z)\big] \, F^P_{s}(dz)\bigg]\,ds \nonumber\\
 &   \ + C \,\cK_\eps\,u + C\, C_{\varepsilon}\,(u^2 + u^{3/2}) \nonumber\\
\leq & \ -u\partial_t \varphi(t,x)  + u \sup_{(b,c,F) \in \Theta} \bigg\{ D_x \varphi(t,x) b + \frac{1}{2} \tr\big[D^2_{xx} \varphi(t,x) \, c\big] \nonumber \\
 &   \ \quad\quad + \int_{|z|>\varepsilon} \big[\varphi(t,x+z)-\varphi(t,x)- D_x \varphi(t,x) h(z)\big] \, F(dz) \bigg\} \nonumber\\
 &   \ + C \,\cK_\eps\,u + C\, C_{\varepsilon}\,(u^2 + u^{3/2}). \label{eq:subsol5}
 \end{align}
 Regarding the integral in this expression, we note that for each $F \in \Theta_3$,
 \begin{align}
  & \ \int_{|z|>\varepsilon} \big[\varphi(t,x+z)-\varphi(t,x)- D_x \varphi(t,x) h(z)\big] \, F(dz) \nonumber \\
  \leq & \ \int_{\R^d} \big[\varphi(t,x+z)-\varphi(t,x)- D_x \varphi(t,x) h(z)\big] \, F(dz) \nonumber\\
    & \ + \bigg| \int_{|z|\leq \varepsilon} \big[\varphi(t,x+z)-\varphi(t,x)- D_x \varphi(t,x) h(z)\big] \, F(dz) \bigg| \nonumber \\
    \leq & \ \int_{\R^d} \big[\varphi(t,x+z)-\varphi(t,x)- D_x \varphi(t,x) h(z)\big] \, F(dz)+ C \, \cK_\eps
   \label{eq:subsol6}
 \end{align}
 by a Taylor expansion as above.
   We deduce from \eqref{eq:subsol5}, \eqref{eq:subsol6} and the definition of $G$ that
   \begin{align*}
    & \ E^P\big[\varphi(t-u,x+X_u)-\varphi(t,x)\big] \nonumber\\
      \leq & \ -u\partial_t \varphi(t,x)   + u \,G\big(D_x \varphi(t,x), D^2_{xx} \varphi(t,x), \varphi(t,x+\cdot)\big)  \nonumber \\
       & \ + C \,\cK_\eps\,u + C\, C_{\varepsilon}\,(u^2 + u^{3/2}). %
   \end{align*}
   By~\eqref{eq:subsol1}, it follows that
   \begin{align*}
     0  \leq \ &-u\partial_t \varphi(t,x)  + u G\big(D_x \varphi(t,x), D^2_{xx} \varphi(t,x), \varphi(t,x+\cdot)\big) \\
     & \ + C \,\cK_\eps\,u + C\, C_{\varepsilon}\,(u^2 + u^{3/2}).
   \end{align*}
   Now divide by $u$ and let first $u$ and then $\eps$ tend to zero. As $\cK_\eps\to0$ by~\eqref{eq:limitcondRestated}, we obtain that
   \[
     0  \leq -\partial_t \varphi(t,x)  + G\big(D_x \varphi(t,x), D^2_{xx} \varphi(t,x), \varphi(t,x+\cdot)\big)
   \]
   as desired.
\end{proof}
\subsection{Uniqueness}\label{se:unique}

The aim of this subsection is to show that a comparison principle holds for the PIDE~\eqref{eq:PIDE}; in particular, this will establish the uniqueness of the solution. We denote by $\USC_b((0,\infty)\times \R^d)$ the set of all bounded upper  semicontinuous functions on $(0,\infty)\times \R^d$. Similarly, $\LSC_b$ stands for the bounded lower semicontinuous functions, and $\SC_b:=\USC_b\cup\LSC_b$.

\begin{proposition}\label{pr:comparisonPIDE}
Let $u \in \USC_b([0,\infty) \times \R^d)$ be a viscosity subsolution and let $v \in \LSC_b([0,\infty) \times \R^d)$ be a viscosity supersolution of~\eqref{eq:PIDE}. If $u(0,\cdot), v(0,\cdot) \in C_{b,Lip}(\R^d)$ and $u(0,\cdot)\leq v(0,\cdot)$, then $u\leq v$.
\end{proposition}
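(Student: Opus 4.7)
The plan is to adapt the doubling-of-variables technique for viscosity comparison: a linear time penalty makes $u$ a strict subsolution, a quadratic spatial penalty exploits the boundedness of $u,v$ to compactify the maximization, and the nonlocal part of $G$ is handled by splitting the jump measure at a threshold $\delta$ coupled to the doubling parameter $\eps$ through Condition~\eqref{eq:limitcondRestated}.

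Arguing by contradiction, assume $\sup(u-v)>0$. For small $\eps,\theta,\gamma>0$ and $T$ large, I would consider
\begin{equation*}
\Psi(t,x,s,y):=u(t,x)-v(s,y)-\frac{|x-y|^2+(t-s)^2}{2\eps}-\theta(t+s)-\gamma(|x|^2+|y|^2)
\end{equation*}
on $[0,T]^2\times (\R^d)^2$. Boundedness of $u,v$ together with the $\gamma$-penalty yields a maximizer $(\hat t,\hat x,\hat s,\hat y)$, and the hypothesis $u(0,\cdot)\leq v(0,\cdot)\in C_{b,Lip}$ keeps the maximum away from $\{t=0\}$; standard estimates give $|\hat x-\hat y|^2/\eps+(\hat t-\hat s)^2/\eps\to 0$ as $\eps\to 0$ with $\hat x,\hat y$ uniformly bounded. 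Applying the sub/supersolution inequalities to the natural quadratic test functions derived from $\Psi$ (which I would modify by smooth caps beyond $\pm 2(\|u\|_\infty+\|v\|_\infty)$ in order to place them in $C_b^{2,3}$ while preserving global domination) and subtracting, a term $2\theta$ appears on the left; the drift contribution to the difference of the $G$-nonlinearities equals $2\gamma\,b\cdot(\hat x+\hat y)=O(\gamma)$ uniformly over $\Theta$ by~\eqref{eq:intcondRestated}, and the parabolic Crandall--Ishii lemma applied to the modified test functions renders the diffusion contribution $\tfrac{1}{2}\tr[c(D^2\varphi_u-D^2\varphi_v)]$ also $O(\gamma)$ uniformly.

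The integro-differential term is the crucial step. A direct computation on the original quadratic test functions yields
\begin{equation*}
\varphi_u(\hat t,\hat x+z)-\varphi_u(\hat t,\hat x)-D\varphi_u\cdot h(z)=D\varphi_u\cdot(z-h(z))+\bigl(\tfrac{1}{2\eps}+\gamma\bigr)|z|^2,
\end{equation*}
with an analogous expression for $\varphi_v$ carrying the opposite sign on the quadratic term. Splitting the integral at $|z|=\delta$, the small-jump contribution to the $G$-difference is bounded by $(\eps^{-1}+2\gamma)\cK_\delta$; by~\eqref{eq:limitcondRestated} one can choose $\delta=\delta(\eps)$ with $\cK_{\delta(\eps)}=o(\eps)$, so this contribution vanishes as $\eps\to 0$. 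For large jumps, the global dominations $\varphi_u\geq u$ and $\varphi_v\leq v$ combined with the maximality inequality
\begin{equation*}
[u(\hat t,\hat x+z)-v(\hat s,\hat y+z)]-[u(\hat t,\hat x)-v(\hat s,\hat y)]\leq 2\gamma[(\hat x+\hat y)\cdot z+|z|^2]
\end{equation*}
(obtained by evaluating $\Psi$ at the shifted points $(\hat t,\hat x+z,\hat s,\hat y+z)$) and~\eqref{eq:intcondRestated} bound the large-jump contribution by $O(\gamma)$ uniformly in $\Theta$. Collecting, one obtains $2\theta\leq\omega(\eps)+O(\gamma)$ with $\omega(\eps)\to 0$; letting $\eps\to 0$ (with the coupled $\delta(\eps)\to 0$), then $\gamma\to 0$ and finally $\theta\to 0$ produces the desired contradiction.

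The principal obstacle I anticipate is this nonlocal step: the natural quadratic test functions lie outside $C_b^{2,3}$, so their truncation must be made compatible with the small/large jump splitting and with the global domination required by the definition given in the paper; concretely, one must verify that the truncated test functions still carry the correct nonlocal information up to controllable errors, and the coupling $\delta=\delta(\eps)$ chosen through~\eqref{eq:limitcondRestated} must absorb the $O(\eps^{-1})$ coefficient arising from the doubling penalty.
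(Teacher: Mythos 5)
Your proposal takes a genuinely different route from the paper. The paper does not run the doubling-of-variables argument itself: it reduces Proposition~\ref{pr:comparisonPIDE} to a general comparison theorem of Hu--Peng (Lemma~\ref{le:genComparison}, essentially \cite[Corollary~53]{HuPeng.09levy}, which in turn rests on the nonlocal Jensen--Ishii machinery of Jakobsen--Karlsen and Barles--Imbert) by introducing the split operators $G^\kappa$ and verifying the structural conditions (C1)--(C9); the hypotheses~\eqref{eq:intcondRestated} and~\eqref{eq:limitcondRestated} enter only through that verification. A self-contained doubling argument would be a real alternative, but as written yours has two genuine gaps, both located exactly at the step the paper outsources, and your own diagnosis (the truncation of the quadratic test functions) is not where the difficulty lies.

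First, the diffusion term. The Hessians of the doubling test functions are $D^2\varphi_u=(\eps^{-1}+2\gamma)I$ and $D^2\varphi_v=-(\eps^{-1}+2\gamma)I$, so $\tr[c\,(D^2\varphi_u-D^2\varphi_v)]=2(\eps^{-1}+2\gamma)\tr c$, which blows up as $\eps\to0$. The parabolic Crandall--Ishii lemma cannot be ``applied to the modified test functions'': it is applied to $u$ and $v$ themselves and yields matrices $X\leq Y+O(\eps)$ in the closures of the parabolic jets. Converting those jets back into admissible \emph{global} test functions for the nonlocal operator, while retaining control of the integral term, is precisely the content of the ``maximum principle for semicontinuous functions applicable to integro-PDEs'' that the paper cites rather than reproves; your sketch assumes this step away.

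Second, the large-jump part goes in the wrong direction. To bound $G(\ldots,\varphi_u(\hat t,\hat x+\cdot))-G(\ldots,\varphi_v(\hat s,\hat y+\cdot))$ from above one needs an \emph{upper} bound on $\varphi_u(\hat t,\hat x+z)$ and a \emph{lower} bound on $\varphi_v(\hat s,\hat y+z)$; the global dominations $\varphi_u\geq u$ and $\varphi_v\leq v$ give exactly the opposite inequalities, so they cannot be combined with your maximality inequality for $u-v$. Evaluating the quadratic explicitly does not save the argument either: for $|z|$ between $\delta(\eps)$ and the capping radius the integrand still carries the factor $\eps^{-1}|z|^2$, and $\eps^{-1}\int_{\delta(\eps)<|z|\leq O(\sqrt{\eps})}|z|^2\,F(dz)$ is not controlled by $\cK_{\delta(\eps)}=o(\eps)$; moreover $\int_{|z|>1}|z|^2\,F(dz)$ may be infinite for $F\in\Theta_3$ (e.g.\ in the $\alpha$-stable Example~\ref{ex:stable}). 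What is actually needed is the equivalence of the paper's definition with a ``mixed'' one in which the integral over $\{|z|>\kappa\}$ is taken of $u$ and $v$ themselves (this is the role of $G^\kappa$), and in the present nondominated setting that equivalence is nontrivial precisely because $G$ fails dominated convergence in $f$ --- which is why the paper verifies the tailored continuity conditions (C1) and (C7).
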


The proof proceeds through the following general result, essentially due to~\cite{HuPeng.09levy} (which, in turn, draws from~\cite{BarlesImbert.08, JakobsenKarlsen.06}).

\begin{lemma}\label{le:genComparison}
  Let $G:\R^d \times \S^d \times C^2_b(\R^d) \to \R$ and suppose there exist functions $G^\kappa: \R^d \times \S^d \times \SC_b(\R^d) \times C^2(\R^d) \to \R$, $\kappa\in (0,1)$ such that Conditions~(C1)--(C9) below are satisfied. Then the assertion of Proposition~\ref{pr:comparisonPIDE} holds for
 \begin{equation*}%
 \partial_t v(t,x)-G\big(D_x v(t,x), D^2_{xx} v(t,x), v(t,x+\cdot)\big)=0,\quad (t,x)\in [0,\infty) \times \R^d.
 \end{equation*}
\end{lemma}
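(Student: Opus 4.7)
The plan is to prove Lemma~\ref{le:genComparison} by the classical doubling-of-variables technique from viscosity theory, adapted to the nonlocal setting via the splitting operators $G^\kappa$. This follows the scheme of Barles--Imbert and Jakobsen--Karlsen, and the version of \cite{HuPeng.09levy}: the role of $G^\kappa$ is to separate the small jumps (which get absorbed into a second-order local term through a Taylor expansion) from the large jumps (which remain genuinely nonlocal but are handled on the unmodified semicontinuous functions $u,v$ themselves). Without such a split, the integral against the measures in $\Theta_3$ would not pair well with the test functions delivered by the Crandall--Ishii lemma, because these test functions only approximate $u,v$ locally.

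Concretely, I would argue by contradiction. Suppose $M:=\sup_{[0,T]\times\R^d}(u-v)>0$ for some $T>0$. Replacing $u$ by $u-\eta/(T-t)$ makes it a strict subsolution on $(0,T)$. Add a spatial penalty $\alpha(|x|^2+|y|^2)$ and the standard doubling penalty $|x-y|^2/(2\varepsilon)+(t-s)^2/(2\varepsilon)$, and let $(t_\varepsilon,x_\varepsilon,s_\varepsilon,y_\varepsilon)$ maximize the resulting functional $\Phi$. Boundedness of $u,v$ and the Lipschitz initial data (together with $u(0,\cdot)\leq v(0,\cdot)$) yield the usual facts: the maximum point stays in a compact set as $\alpha\to0$, the penalization terms are bounded, and $t_\varepsilon,s_\varepsilon>0$ for all small parameters. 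Ishii's parabolic lemma then produces second-order parabolic jets $(a_1,p_\varepsilon,X)\in\bar{\cP}^{2,+}u(t_\varepsilon,x_\varepsilon)$ and $(a_2,p_\varepsilon,Y)\in\bar{\cP}^{2,-}v(s_\varepsilon,y_\varepsilon)$ with $a_1-a_2=\eta/(T-t_\varepsilon)^2$ and the matrix inequality $X\leq Y+O(\varepsilon^{-1})$-type correction.

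Next, I would apply the viscosity sub- and supersolution inequalities for the approximation $G^\kappa$: conditions (C1)--(C9) are precisely tailored so that $u$ (resp.\ $v$) remains a viscosity sub- (resp.\ super-) solution of the equation with $G$ replaced by $G^\kappa$, with the nonlocal slot of $G^\kappa$ evaluated at the \emph{actual} functions $u(t_\varepsilon,x_\varepsilon+\cdot)$ and $v(s_\varepsilon,y_\varepsilon+\cdot)$ rather than the test functions. Subtracting the two inequalities cancels the common gradient $p_\varepsilon$, and the structural assumptions on $G^\kappa$---monotonicity/ellipticity in the matrix argument, continuity, and consistency $G^\kappa\to G$ on $C^2_b$---reduce the estimate to
\[
  \frac{\eta}{T^2}\ \leq\ \big[\text{matrix-difference term}\big]+\big[\text{nonlocal-difference term}\big].
\]
The matrix-difference term is $o_\varepsilon(1)+o_\alpha(1)$ by the Ishii matrix inequality together with the uniform bound $|c|\leq\cK$ from~\eqref{eq:intcondRestated}. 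The nonlocal-difference term, evaluated on $u$ and $v$, is bounded by $M-\Phi(t_\varepsilon,x_\varepsilon,s_\varepsilon,y_\varepsilon)$ plus an error $\omega(\kappa)$ quantifying the gap between $G$ and $G^\kappa$; crucially this error is uniform over $\Theta$ thanks to~\eqref{eq:limitcondRestated}. Passing $\varepsilon\to0$, then $\alpha\to0$, then $\kappa\to0$, then $\eta\to0$ produces $0<\eta/T^2\leq 0$, a contradiction.

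The main obstacle I expect is handling the nonlocal-difference term under the non-dominated family of L\'evy measures $\Theta_3$: there is no dominated convergence available when comparing $G$ to $G^\kappa$, which is why the small-jump mass must be controlled \emph{uniformly} via~\eqref{eq:limitcondRestated}, and why the splitting $G^\kappa$ must push the small jumps into a second-order term whose coefficient is $\int_{|z|\leq\kappa}zz^{\top}\,F(dz)$ of trace at most $\cK_\kappa$. Ensuring that this Taylor-absorbed piece is compatible with the matrix inequality $X\leq Y+\dots$ coming from Ishii's lemma is the delicate computational step, and is exactly what makes the careful formulation of (C1)--(C9) necessary; once those axioms are in hand, the argument above closes in routine fashion.
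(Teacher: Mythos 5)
The paper does not actually reprove this lemma: its proof consists of citing \cite[Corollary~53]{HuPeng.09levy} (whose argument is the doubling-of-variables scheme drawing on \cite{JakobsenKarlsen.06,BarlesImbert.08}) and checking that the slightly weaker form of (C8) used here does not disturb that proof. Your sketch is, in substance, an unpacking of that cited argument, so the strategy is the right one; however, two points are genuine gaps rather than routine details. First, Lemma~\ref{le:genComparison} is an abstract statement whose proof may use only (C1)--(C9); your argument instead leans on \eqref{eq:intcondRestated}, \eqref{eq:limitcondRestated}, $\Theta_3$ and the explicit small-jump/large-jump form of $G^\kappa$. In the paper's architecture these enter only through Lemmas~\ref{le:C1}--\ref{le:51}, where (C1)--(C9) are verified for the concrete operators; inside the comparison lemma the ``$\omega(\kappa)$ error'' between $G$ and $G^\kappa$ must be produced from (C4) and (C9) (applied at a point where the two test functions have equal gradients), and the step in which the nonlocal slot of $G^\kappa$ is evaluated at $u$ and $v$ themselves is an equivalence-of-definitions statement that has to be deduced from (C5)--(C7) by approximation, not asserted as ``precisely tailored.''

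Second, the localization penalty $\alpha(|x|^2+|y|^2)$ breaks down in this setting. Abstractly, (C8) only controls perturbations $\psi\in C^2_b(\R^d)$, i.e.\ with bounded $D_x\psi$ and $D^2_{xx}\psi$, which excludes $\alpha|x|^2$. Concretely, your bound for the large-jump part of the nonlocal difference rests on the maximum-point inequality $u(t_\eps,x_\eps+z)-v(s_\eps,y_\eps+z)\le u(t_\eps,x_\eps)-v(s_\eps,y_\eps)+\alpha\big(2(x_\eps+y_\eps)\cdot z+2|z|^2\big)$, and integrating the $|z|^2$ term over $\{|z|>\kappa\}$ requires $\int_{|z|>1}|z|^2\,F(dz)<\infty$, which is not implied by \eqref{eq:intcondRestated}; in the stable case of Example~\ref{ex:stable} this integral is infinite, so your error term is $+\infty$ for every $\alpha>0$ and the limiting procedure collapses. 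The standard repair is to localize with a function whose first and second derivatives are bounded, so that its increments are of order $|z|\wedge|z|^2$ (hence integrable uniformly over $\Theta_3$ by \eqref{eq:intcondRestated}) and (C8) becomes applicable; with that change, and with the doubling terms routed through (C8)/(C9) rather than an ad hoc estimate, your argument closes along the lines of the proof cited by the paper.
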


\begin{proof}
  This is essentially the result of \cite[Corollary~53]{HuPeng.09levy}. The only difference is that our Condition~(C8) below is slightly weaker than its analogue \cite[Theorem~51, Condition~(i)]{HuPeng.09levy}. An inspection of the proof of \cite[Theorem~51]{HuPeng.09levy} shows that the result remains true under the weaker condition.
\end{proof}

The conditions mentioned in the preceding lemma run as follows.

\begin{description}
  \item[(C1)] Let $(t_k,x_k,p_k,q_k) \to (t,x,p,q)$ in $(0,\infty)\times \R^d\times \R^d\times\S^d$.
  Moreover, let $f_k, f\in C^{1,2}_b((0,\infty)\times \R^d)$ be such that $f_k(t_k,x_k+\cdot) \to f(t,x+\cdot)$ locally uniformly on $\R^d$, $D_x f_k \to D_x f$ and $D^2_{xx} f_k \to D^2_{xx} f$ locally uniformly on $(0,\infty)\times \R^d$, and $(f_k)_{k \in \N}$ is uniformly bounded.
  Then
  \begin{equation*}
  G(p_k,q_k,f_k(t_k,x_k+\cdot)) \to G(p,q,f(t,x+\cdot)).
  \end{equation*}

  \item[(C2)] Let $(t,x,p,q_1,q_2)\in (0,\infty)\times \R^d\times \R^d\times\S^d\times\S^d$ be such that $q_1 \geq q_2$ and
  let $f_1, f_2 \in C^{1,2}_b((0,\infty)\times \R^d)$ be such that $(f_1-f_2)(t,\cdot)$ has a global minimum at $x$. Then
  \begin{equation*}
  G(p,q_1,f_1(t,x+\cdot)) \geq G(p,q_2,f_2(t,x+\cdot)).
  \end{equation*}

  \item[(C3)] Let $(t,x,p,q)\in (0,\infty)\times \R^d\times \R^d\times\S^d$ and $f \in C^{1,2}_b((0,\infty)\times \R^d)$. Then
  \begin{equation*}
  G(p,q,f(t,x+\cdot)+c) = G(p,q,f(t,x+\cdot)),\quad c \in \R.
  \end{equation*}

  \item[(C4)] Let $(t,x,p,q)\in (0,\infty)\times \R^d\times \R^d\times\S^d$ and
  let $f \in C^{1,2}_b((0,\infty)\times \R^d)$. Then
  \begin{equation*}
  G^\kappa(p,q, f(t,x+\cdot), f(t,x+\cdot)) = G(p,q, f(t,x+\cdot)),\quad \kappa \in (0,1).
  \end{equation*}

  \item[(C5)] Let $(t,x,p,q_1,q_2)\in (0,\infty)\times \R^d\times \R^d\times\S^d\times\S^d$ be such that $q_1 \geq q_2$,
  let $f_1   \in \LSC_b((0,\infty)\times \R^d)$ and $f_2 \in \USC_b((0,\infty)\times \R^d)$ be such that $(f_1-f_2)(t,\cdot)$ has a global minimum at $x$ and
  let $g_1, g_2 \in   C^{1,2}_b((0,\infty)\times \R^d)$ be such that $(g_1-g_2)(t,\cdot)$ has a global minimum at $x$. Then, for all $\kappa\in (0,1)$,
  \begin{equation*}
  G^\kappa(p,q_1, f_1(t,x+\cdot), g_1(t,x+\cdot)) \geq G^\kappa(p,q_2, f_2(t,x+\cdot), g_2(t,x+\cdot)).
  \end{equation*}

  \item[(C6)] Let $(t,x,p,q)\in (0,\infty)\times \R^d\times \R^d\times\S^d$, $f\in \SC_b((0,\infty)\times \R^d)$ and $g \in C^{1,2}_b((0,\infty)\times \R^d)$.
  Then, for all $\kappa \in (0,1)$ and $c_1, c_2 \in \R$,
  \begin{equation*}
  G^\kappa(p,q, f(t,x+\cdot)+c_1, g(t,x+\cdot)+c_2) = G^\kappa(p,q, f(t,x+\cdot), g(t,x+\cdot)).
  \end{equation*}

  \item[(C7)]  Let $(t,x,p,q)\in (0,\infty)\times \R^d\times \R^d\times\S^d$,
  let $f   \in \SC_b((0,\infty)\times \R^d)$
  and let $f_n,g \in   C^{1,2}_b((0,\infty)\times \R^d)$ be such that $f_n(t,\cdot)\to f(t,\cdot)$ locally uniformly on $\R^d$ and $(f_n)_{n \in \N}$ is uniformly bounded.
  Then, for all $\kappa \in (0,1)$,
  \begin{equation*}
  G^\kappa(p,q, f_n(t,x+\cdot), g(t,x+\cdot)) \to G^\kappa(p,q, f(t,x+\cdot), g(t,x+\cdot)).
  \end{equation*}

  \item[(C8)] There exists a constant $C>0$ such that
  \begin{align*}
  & \ |G^\kappa(p_1,q_1,f(t,\cdot)+ \psi(\cdot), g(t,\cdot)+ \psi(\cdot))
  - G^\kappa(p_2,q_2,f(t,\cdot), g(t,\cdot))| \\
  \leq & \ C\big(|p_1-p_2|+ |q_1-q_2|+ \|D_x \psi\|_\infty +\|D^2_{xx}\psi\|_\infty)\big)
  \end{align*}
  for all $\kappa \in (0,1)$, $t \in (0,\infty)$, $p_1, p_2 \in \R^d$, $q_1, q_2 \in \S^d$, $f \in \SC_b((0,\infty) \times \R^d)$, $g \in C^{1,2}((0,\infty) \times \R^d)$ and $\psi \in C^2_b(\R^d)$.

  \item[(C9)] Let $(t,x,p,q)\in (0,\infty)\times \R^d\times \R^d\times\S^d$,
  let $f \in \SC_b((0,\infty)\times \R^d)$ and
  let $g_1, g_2 \in C^{1,2}((0,\infty)\times \R^d)$ satisfy $D_x g_1(t,x)= D_x g_2(t,x)$.  Then
  \begin{equation*}
  \lim\limits_{\kappa \to 0} |G^\kappa(p,q, f(t,x+ \cdot), g_1(t,x+\cdot))-
  G^\kappa(p,q, f(t,x+ \cdot), g_2(t,x+\cdot))| =0.
  \end{equation*}
\end{description}

In order to deduce Proposition~\ref{pr:comparisonPIDE} from Lemma~\ref{le:genComparison}, we define the auxiliary functions $G^\kappa:\R^d \times \S^d \times \SC_b(\R^d) \times C^{2}( \R^d) \to \R$, $\kappa\in(0,1)$ by
\begin{multline}
G^\kappa(p,q,f(\cdot),g(\cdot)) \ :=  \ \sup_{(b,c,F)\in \Theta} \bigg\{ \int_{|z|>\kappa} [f(z)-f(0)  - D_x g(0) h(z)] \, F(dz)\\
 + \int_{|z|\leq \kappa} [g(z)-g(0)-D_x g(0) h(z)] \, F(dz) +  p b + \frac{1}{2} \tr[q c] \bigg\}. \label{eq:G^k}
\end{multline}
In the remainder of this section, we verify that (C1)--(C9) hold for this choice of $G^\kappa$ and $G$ as in~\eqref{eq:PIDE-G}, which will complete the proof of Proposition~\ref{pr:comparisonPIDE}. To simplify the notation, we assume that $h$ is the canonical truncation function
\[
  h(z)=z\1_{|z|\leq 1}.
\]
This entails no loss of generality because the PIDE~\eqref{eq:PIDE} does not depend on the choice of $h$.

\begin{lemma}\label{le:C1}
The function $G$ of~\eqref{eq:PIDE-G} satisfies (C1)--(C3).
\end{lemma}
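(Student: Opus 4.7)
The plan is to verify each of the three conditions in turn, with (C3) being trivial, (C2) following from elementary pointwise inequalities, and (C1) requiring a uniform-in-$\Theta$ estimate based on the hypotheses \eqref{eq:intcondRestated} and \eqref{eq:limitcondRestated}.

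For (C3), I would simply note that replacing $f(z)$ by $f(z)+c$ leaves $f(z)-f(0)$ unchanged and does not affect $D_x f(0)$, so each summand in the supremum defining $G$ is invariant under addition of a constant. For (C2), the assumption that $(f_1-f_2)(t,\cdot)$ has a global minimum at $x$ yields the two pointwise identities $D_x f_1(t,x)=D_x f_2(t,x)$ and
\[
f_1(t,x+z)-f_1(t,x)\geq f_2(t,x+z)-f_2(t,x),\quad z\in\R^d,
\]
which together imply that the integrand in $G$ is pointwise larger for $(f_1,D_xf_1)$ than for $(f_2,D_xf_2)$. Combined with $\tr[q_1 c]\geq \tr[q_2 c]$ for $c\in\S^d_+$ (using $q_1\geq q_2$), every term in the supremum is larger with $(q_1,f_1)$, so the supremum itself increases.

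The substantive content is (C1). Writing $J_k(b,c,F)$ and $J(b,c,F)$ for the expressions inside the supremum at stage $k$ and at the limit, respectively, I would bound $\sup_{(b,c,F)\in\Theta}|J_k-J|$; since $|\sup J_k-\sup J|\leq \sup|J_k-J|$, this will suffice. The linear parts $p\cdot b+\tfrac12\tr[qc]$ converge uniformly in $(b,c)$ because $|b|+|c|\leq\cK$ by \eqref{eq:intcondRestated}. For the integral part I would fix $0<\eps<1$ such that $h(z)=z$ on $\{|z|\leq\eps\}$ and some large $R$, and split the integral into the three regions $\{|z|\leq\eps\}$, $\{\eps<|z|\leq R\}$, $\{|z|>R\}$. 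On the small-jump region, a second-order Taylor expansion and the uniform boundedness of $D^2_{xx}f_k$ give an integrand bounded by $C|z|^2$, so that region contributes at most $C\cK_\eps$, which tends to $0$ as $\eps\to 0$ by \eqref{eq:limitcondRestated}. On the far-tail region, the integrand is uniformly bounded by $C$ and, using $\int_{|z|>R}F(dz)\leq R^{-1}\int_{|z|>R}|z|\,F(dz)\leq\cK/R$ for $R\geq 1$ from \eqref{eq:intcondRestated}, the contribution is at most $C\cK/R$, which is made small by choosing $R$ large. On the middle region $\{\eps<|z|\leq R\}$, the locally uniform convergence $f_k(t_k,x_k+\cdot)\to f(t,x+\cdot)$, together with the convergences of $D_xf_k$ and of $(t_k,x_k,p_k,q_k)$, implies that the integrand converges uniformly in $z$; since $F(\{\eps<|z|\leq R\})\leq \cK(1+\eps^{-2})$ uniformly in $F\in\Theta_3$ (again by \eqref{eq:intcondRestated}), this contribution vanishes as $k\to\infty$.

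The only obstacle worth flagging is the need to make the estimate uniform over the (potentially very large) set $\Theta$, which is precisely why both \eqref{eq:intcondRestated} and \eqref{eq:limitcondRestated} are needed: the former supplies uniform control of the mass of $F$ on $\{|z|\geq\eps\}$ and on the far tail, while the latter is exactly what forces the small-jump remainder to vanish uniformly in $F$. With these three pieces assembled, choosing $R$ large then $\eps$ small then $k$ large yields $\sup_{(b,c,F)\in\Theta}|J_k-J|\to 0$, which proves (C1).
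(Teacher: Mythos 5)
Your proof is correct. Conditions (C2) and (C3) are handled exactly as in the paper (which simply says they ``follow directly from the definitions''), and your verification of (C1) is sound, but your decomposition of the integral term differs from the paper's in an instructive way. The paper splits at the fixed radii $1$ and $N$: on $\{|z|\leq 1\}$ it Taylor-expands \emph{both} the stage-$k$ and the limiting integrand to second order and controls their \emph{difference} by $\sup_{|\xi|\le 1}|D^2_{xx}f_k(t_k,x_k+\xi)-D^2_{xx}f(t,x+\xi)|\cdot|z|^2$, so that locally uniform convergence of the Hessians together with $\int_{|z|\leq 1}|z|^2\,F(dz)\leq\cK$ from~\eqref{eq:intcondRestated} already makes this term vanish --- no appeal to~\eqref{eq:limitcondRestated} is needed. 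You instead introduce a third cutoff $\eps$ and bound each small-jump integrand separately by $C|z|^2$, paying the price $C\cK_\eps$ and invoking~\eqref{eq:limitcondRestated} to send it to zero. Both routes work; yours avoids comparing Hessians at shifted base points at the cost of an extra parameter and an extra hypothesis. One small caveat: your closing remark that~\eqref{eq:limitcondRestated} ``is exactly what forces the small-jump remainder to vanish'' overstates matters for this lemma --- as the paper's argument shows, (C1) holds under~\eqref{eq:intcondRestated} alone; condition~\eqref{eq:limitcondRestated} is genuinely needed elsewhere (e.g.\ in (C9) and in the existence proof), not here.
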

\begin{proof}
  Conditions (C2) and (C3) follow directly from the definitions; we focus on (C1).
  In view of \eqref{eq:intcondRestated}, we may fix $N>1$ and estimate
  \[%
    \ |G(p_k,q_k,f_k(t_k,x_k+\cdot)) - G(p,q,f(t,x+\cdot))| \leq I^1_k + I^2_k  + I^3_{k} + I^4_{k,N} + I^5_{k,N},
  \]%
  where
  \[
    I^1_k =\sup_{(b,c,F) \in \Theta} |b|\,|p_k-p|,\quad\quad I^2_k = \frac{1}{2} \sup_{(b,c,F) \in \Theta} |c|\,|q_k-q|,
  \]
  \begin{multline*}
  I^3_k =   \sup_{(b,c,F) \in \Theta} \bigg\{\int_{|z|\leq 1} \big|\big(f_k(t_k,x_k+z)-f_k(t_k,x_k)-D_x f_k(t_k,x_k) z\big)\\
   - \big(f(t,x+z)-f(t,x)-D_x f(t,x) z\big)\big| \, F(dz)
   \bigg\},
  \end{multline*}
  \begin{multline*}
  I^4_{k,N} =   \sup_{(b,c,F) \in \Theta} \bigg\{\int_{1\leq |z|\leq N} \big|\big(f_k(t_k,x_k+z)-f(t,x+z)\big) \\
    - \big(f_k(t_k,x_k)-f(t,x) \big)\big| \, F(dz)\bigg\},
  \end{multline*}
  \begin{multline*}
  I^5_{k,N} =   \sup_{(b,c,F) \in \Theta} \bigg\{\int_{|z|> N} \big|\big(f_k(t_k,x_k+z)-f(t,x+z)\big) \\
    - \big(f_k(t_k,x_k)-f(t,x) \big)\big| \, F(dz)\bigg\}.
  \end{multline*}
  In view of the assumptions made in (C1) and~\eqref{eq:intcondRestated}, we see that $I^1_k+I^2_k\to0$ as $k\to\infty$. By a Taylor expansion, there are $\xi_{k,z}, \xi_z \in \{|z| \leq 1\}$ such that
  \begin{equation*}
    I^3_k= \sup_{(b,c,F) \in \Theta} \bigg\{\int_{|z|\leq 1} \frac{1}{2} \tr\big[\big(D^2_{xx}f_k(t_k,x_k+\xi_{k,z})-D^2_{xx}f(t,x+\xi_z)\big) zz^\top\big] \, F(dz).
  \end{equation*}
  Using \eqref{eq:intcondRestated} and the locally uniform convergence of $D^2_{xx}f_k$ to $D^2_{xx}f$, it follows that $I^3_k\to0$.
  Similarly, there exist $\xi_{k,z}, \xi_{z} \in \{|z|\leq N\}$ such that
  \begin{equation*}
  I^4_{k,N}= \sup_{(b,c,F)\in \Theta} \bigg\{\int_{1\leq |z|\leq N} \big|\big(D_x f_k(t_k,x_k+\xi_{k,z})- D_x f(t,x+\xi_{z})\big) z\big| \, F(dz) \bigg\},
  \end{equation*}
  and the locally uniform convergence of $D_{x}f_k$ to $D_{x}f$ yields that $I^4_{k,N}\to0$ for any fixed $N$.
  Using the uniform bound on $(f_k)_k$ assumed in (C1), we also see that
 \begin{equation*}
   I^5_{k,N}\leq C \, \sup_{(b,c,F) \in \Theta} \bigg\{\int_{|z| > N} 1 \, F(dz)\bigg\} \leq \frac{C}{N} \, \sup_{(b,c,F) \in \Theta} \bigg\{\int_{|z| > 1} |z| \, F(dz)\bigg\};
 \end{equation*}
 note that the right-hand side is independent of $k$ and finite by~\eqref{eq:intcondRestated}.
 Summarizing the above, we have
 \begin{align*}
   \limsup_{k \to \infty} |G(p_k,q_k,f_k(t_k,x_k+\cdot)) - G(p,q,f(t,x+\cdot))| \leq C/N
 \end{align*}
 for every $N>1$ and the result follows.
\end{proof}
\begin{lemma} \label{le:C4to7}
The functions $G$ of \eqref{eq:PIDE-G} and $(G^\kappa)_{\kappa \in (0,1)}$ of \eqref{eq:G^k} satisfy (C4)--(C7).
\end{lemma}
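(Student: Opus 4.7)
The plan is to verify each of (C4)--(C7) by direct computation from the definitions of $G$ in~\eqref{eq:PIDE-G} and $G^\kappa$ in~\eqref{eq:G^k}; throughout, the finiteness condition~\eqref{eq:intcondRestated} will supply the quantitative estimates, and no genuinely new ideas are required beyond those already used in Lemma~\ref{le:C1}.

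Conditions (C4) and (C6) I would dispatch first since they are immediate. Setting $f=g$ in~\eqref{eq:G^k} combines the two integrals over $\{|z|>\kappa\}$ and $\{|z|\leq\kappa\}$ into a single integral over $\R^d$ whose integrand is exactly the one appearing in~\eqref{eq:PIDE-G}, giving (C4); and shifting $f,g$ by constants $c_1,c_2$ affects neither the differences $f(z)-f(0)$, $g(z)-g(0)$ nor $D_x g$, giving (C6).

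For (C5), the key observation is that the first-order necessary condition at the global minimum of $g_1-g_2$ forces $D_x g_1(t,x)=D_x g_2(t,x)$, so that the $D_x g\,h(z)$ correction is identical for both sides. The global-minimum hypothesis on $f_1-f_2$ at $x$ rearranges to $f_1(t,x+z)-f_1(t,x)\geq f_2(t,x+z)-f_2(t,x)$ for every $z\in\R^d$, making the integrand over $\{|z|>\kappa\}$ pointwise $\geq$ for the subscript-$1$ choice; the analogous rearrangement of $g_1-g_2$ handles the $\{|z|\leq\kappa\}$ integral. The drift part is common since $p$ is fixed, and $q_1\geq q_2$ together with $c\in\S^d_+$ yields $\tr[q_1 c]\geq\tr[q_2 c]$. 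Taking the supremum over $\Theta$ preserves all inequalities.

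The only condition requiring an actual estimate is (C7), where only the first integral of $G^\kappa$ depends on $f_n$. I will bound the difference uniformly over $\Theta$ by splitting at some $N>1$. On $\{|z|>N\}$, the uniform bound on $(f_n)$ combined with $F(\{|z|>N\})\leq\mathcal{K}/N$, which follows from $|z|\wedge|z|^2=|z|$ on that set and~\eqref{eq:intcondRestated}, gives a tail contribution of order $1/N$. On $\{\kappa<|z|\leq N\}$, the total mass $F(\{\kappa<|z|\leq N\})$ is bounded by $\mathcal{K}/\kappa^2$ via~\eqref{eq:intcondRestated} (using $\kappa<1$ so $|z|\wedge|z|^2\geq\kappa^2$), and local uniform convergence of $f_n$ to $f$ on $\{|y-x|\leq N\}$ drives this part to zero as $n\to\infty$. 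Sending $n\to\infty$ and then $N\to\infty$ completes the argument. I anticipate no substantive obstacle; the lemma is in essence bookkeeping, with the only subtle point being the identification of the derivatives in (C5) via the first-order condition at the interior minimum.
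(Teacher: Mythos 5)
Your proof is correct and follows essentially the same route the paper takes (the paper merely notes that (C4)--(C6) are immediate from the definitions and that (C7) is handled like (C1)); your verification of (C5) via the first-order condition $D_xg_1(t,x)=D_xg_2(t,x)$ at the global minimum and your $N$-splitting for (C7) are exactly the intended details.
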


\begin{proof}
  Conditions (C4)--(C6) follow directly from the definitions of $G$, $G^\kappa$ and~\eqref{eq:intcondRestated}. The proof of (C7) is similar to the verification of~(C1) and therefore omitted.
\end{proof}

\begin{lemma}\label{le:51}
The functions $(G^\kappa)_{\kappa \in (0,1)}$ of \eqref{eq:G^k} satisfy (C8) and (C9).
\end{lemma}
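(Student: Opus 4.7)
Both conditions are direct consequences of the definition \eqref{eq:G^k} combined with the two uniform estimates \eqref{eq:intcondRestated} and \eqref{eq:limitcondRestated} on $\Theta$; no further probabilistic input is needed. In each case the starting point is the elementary inequality $|\sup_{\alpha} A_\alpha - \sup_{\alpha} B_\alpha | \leq \sup_\alpha |A_\alpha - B_\alpha|$ applied to the supremum over $(b,c,F)\in\Theta$, so that it suffices to bound the difference of the integrands uniformly in $(b,c,F)$. We use throughout the convention $h(z)=z\1_{|z|\leq 1}$ adopted in the excerpt.

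For (C8), the replacement of $(p_1,q_1,f+\psi,g+\psi)$ by $(p_2,q_2,f,g)$ inside the supremum produces three kinds of terms. The drift/diffusion part gives $(p_1-p_2)b+\tfrac12\tr[(q_1-q_2)c]$, which is bounded by $\mathcal{K}(|p_1-p_2|+|q_1-q_2|)$ thanks to \eqref{eq:intcondRestated}. The $\psi$-contribution from the two integrals recombines, because the $-D_x\psi(0)h(z)$ correction appears identically on both regions, into the single integral
\[
\int_{\R^d}\bigl[\psi(z)-\psi(0)-D_x\psi(0)h(z)\bigr]\,F(dz),
\]
independently of $\kappa$. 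I would then split at $|z|=1$: on $\{|z|\leq 1\}$ a second-order Taylor expansion (with $h(z)=z$) bounds the integrand by $\tfrac12\|D^2_{xx}\psi\|_\infty|z|^2$, while on $\{|z|>1\}$ the mean value theorem (with $h(z)=0$) bounds it by $\|D_x\psi\|_\infty|z|$. Since $\int_{|z|\leq 1}|z|^2\,F(dz)+\int_{|z|>1}|z|\,F(dz)\leq\mathcal{K}$ by \eqref{eq:intcondRestated}, this yields the required Lipschitz estimate with constant $C$ depending only on $\mathcal{K}$.

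For (C9), the hypothesis $D_xg_1(t,x)=D_xg_2(t,x)$ means that $D_x[g_i(t,x+\cdot)](0)$ takes the same value for $i=1,2$. Consequently the $-D_xg(0)h(z)$ terms in $G^\kappa(p,q,f(t,x+\cdot),g_i(t,x+\cdot))$ cancel on both $\{|z|>\kappa\}$ and $\{|z|\leq\kappa\}$, and the $f$-part together with the $(p,q)$-linear part are identical in the two expressions. The only surviving contribution to $G^\kappa(\cdots,g_1(\cdots))-G^\kappa(\cdots,g_2(\cdots))$ is therefore
\[
\int_{|z|\leq\kappa}\bigl[(g_1-g_2)(t,x+z)-(g_1-g_2)(t,x)\bigr]\,F(dz).
\]
A second-order Taylor expansion of $g_1-g_2$ at $(t,x)$, combined with the vanishing of $D_x(g_1-g_2)(t,x)$, bounds this integrand by $M_{t,x}|z|^2$ with $M_{t,x}:=\tfrac12\sup_{|y|\leq 1}\|D^2_{xx}(g_1-g_2)(t,x+y)\|<\infty$, so the difference is at most $M_{t,x}\,\mathcal{K}_\kappa$, which tends to $0$ as $\kappa\to 0$ by \eqref{eq:limitcondRestated}.

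The only minor subtlety is the bookkeeping of cancellations: one has to verify cleanly that in (C8) the two $-D_x\psi(0)h(z)$ corrections reassemble into a single integral over all of $\R^d$, and that in (C9) the equality of $D_xg_i(t,x)$ kills both the $|z|>\kappa$ and the $|z|\leq\kappa$ cross-terms. Once those are in place, (C8) and (C9) reduce to the two structural estimates $\mathcal{K}<\infty$ and $\mathcal{K}_\kappa\to 0$, completing the proof of Proposition~\ref{pr:comparisonPIDE}.
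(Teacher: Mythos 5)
Your proposal is correct and follows essentially the same route as the paper: for (C8) you isolate the drift/diffusion differences (bounded via $\mathcal{K}$) and the $\psi$-contribution, which after recombining over $\R^d$ is split at $|z|=1$ and controlled by second-order Taylor on $\{|z|\leq 1\}$ and the mean value theorem on $\{|z|>1\}$, exactly as in the paper's estimates $I_1$ and $I_2$; for (C9) you use the cancellation from $D_xg_1(t,x)=D_xg_2(t,x)$ to reduce to the small-jump integral of $g_1-g_2$, bounded by a local $C^2$-norm times $\mathcal{K}_\kappa\to 0$, matching the paper's argument on the unit ball $K$.
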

\begin{proof}
  We first show (C8). By definition, we have
  \begin{align*}
  & \ |G^\kappa(p_1,q_1,f(t,\cdot)+ \psi(\cdot), g(t,\cdot)+ \psi(\cdot))
  - G^\kappa(p_2,q_2,f(t,\cdot), g(t,\cdot))|\\
  \leq & \sup_{(b,c,F) \in \Theta} |b|\,|p_1-p_2|
   + \frac{1}{2} \sup_{(b,c,F) \in \Theta} |c|\,|q_1-q_2| + I_1 + I_2,
  \end{align*}
  where
  \begin{align*}
    I_1 & = \sup_{(b,c,F) \in \Theta} \bigg\{\int_{|z|\leq 1} |\psi(z)-\psi(0)-D_x \psi(0) z| \, F(dz)\bigg\}, \\
    I_2 & = \sup_{(b,c, F) \in \Theta} \bigg\{\int_{|z|>1} |\psi(z)-\psi(0)| \, F(dz)\bigg\}.
  \end{align*}
  By a Taylor expansion, we see that there are $\xi_z \in \R^d$ such that
  \begin{align*}
  I_1 &=  \frac{1}{2}\,\sup_{(b,c,F) \in \Theta} \bigg\{\int_{|z|\leq 1} |\tr[D^2_{xx} \psi(\xi_z) \, zz^\top]| \, F(dz)\bigg\}\\
  & \leq \frac{1}{2} \,\|D^2_{xx}\psi\|_\infty \, \sup_{(b,c,F) \in \Theta} \bigg\{\int_{|z|\leq 1} |z|^2 \, F(dz)\bigg\}
  \end{align*}
  and the integral on the right-hand side is bounded by $\cK$ due to~\eqref{eq:intcondRestated}. Similarly,
  \begin{align*}
  I_2 &= \sup_{(b,c,F) \in \Theta} \bigg\{\int_{|z|>1} |D_x\psi(\xi_z) z| \, F(dz) \bigg\} \\
  & \leq \|D_{x}\psi\|_\infty \, \sup_{(b,c,F) \in \Theta} \bigg\{\int_{|z|> 1} |z| \, F(dz)\bigg\}
  \end{align*}
  and again the integral is bounded by $\cK$. Property~(C8) follows, with the constant being~$\cK$ up to a numerical factor.

  The assumptions in~(C9) imply that
  \begin{align*}
  & \ |G^\kappa(p,q, f(t,x+ \cdot), g_1(t,x+\cdot))-
  G^\kappa(p,q, f(t,x+ \cdot), g_2(t,x+\cdot))|\\
  \leq & \ \sup_{(b,c,F) \in \Theta} \bigg\{\Big|\int_{|z|\leq \kappa} g_1(t,x+z)-g_1(t,x)-D_x g_1(t,x) z \, F(dz) \\
  & \phantom{ \ \sup_{(b,c,F) \in \Theta} \bigg\{\Big|\int_{|z|\leq \kappa}} - \int_{|z|\leq \kappa} g_2(t,x+z)-g_2(t,x)-D_x g_2(t,x) z \, F(dz) \Big|\bigg\}.
  \end{align*}
  If $K\subset \R^d$ is the closed ball of unit radius around $x$, a Taylor expansion shows that the above expression is bounded by
  \begin{align*}
   \frac{1}{2}\big(\|D^2_{xx}\,g_1(t,\cdot)\|_K + \|D^2_{xx}\,g_2(t,\cdot)\|_K\big) \sup_{(b,c,F) \in \Theta} \bigg\{\int_{|z|\leq \kappa} |z|^2 \, F(dz) \bigg\},
  \end{align*}
  where $\|\cdot\|_K$ is the uniform norm on $K$. Thus, the claim follows from~\eqref{eq:limitcondRestated}.
\end{proof}

\newcommand{\dummy}[1]{}

\end{document}